\documentclass[11pt]{article}
\usepackage[margin=1in]{geometry}
\usepackage{appendix}
\usepackage[export]{adjustbox}
\usepackage{amsthm}
\usepackage{amsmath}
\usepackage{amssymb}
\usepackage{upgreek}
\usepackage{mathtools}
\usepackage{enumerate}
\usepackage{natbib}
\usepackage[colorlinks,citecolor=blue,urlcolor=blue,filecolor=blue,backref=page]{hyperref}
\usepackage{graphicx}
\usepackage{subfig}
\usepackage{dsfont}
\usepackage{snaptodo}
\usepackage{hyperref}
\usepackage{diagbox}
\usepackage{float}
\usepackage{placeins}
\usepackage{authblk}

\usepackage{tikz}
\usetikzlibrary{positioning}
\tikzset{vtx/.style={circle,draw,minimum size=18pt,inner sep=0pt}}

\numberwithin{equation}{section}
\theoremstyle{plain}

\theoremstyle{definition}

\theoremstyle{definition}
\newtheorem{definition}{Definition}

\theoremstyle{assumption}
\newtheorem{assumption}{Assumption}

\theoremstyle{proposition}
\newtheorem{proposition}{Proposition}[section]

\theoremstyle{proposition}
\newtheorem{corollary}{Corollary}[section]

\theoremstyle{example}
\newtheorem{example}{Example}

\theoremstyle{lemma}
\newtheorem{lemma}{Lemma}[section]

\theoremstyle{theorem}
\newtheorem{theorem}{Theorem}[section]

\theoremstyle{remark}
\newtheorem*{remark}{Remark}

\hypersetup{
    colorlinks=true,
    linkcolor=blue,
    filecolor=magenta,      
    urlcolor=cyan,
    pdftitle={Overleaf Example},
    pdfpagemode=FullScreen,
    }

\title{Spectral analysis of multivariate stationary Hawkes processes}
\author[1]{Yifu Tang}
\author[1]{Conor Kresin}
\author[1]{Boris Baeumer}
\author[1]{Ting Wang}
\affil[1]{Department of Mathematics and Statistics, University of Otago, New Zealand}

\begin{document}
\maketitle

\begin{abstract}
    We establish the asymptotic validity of frequency-domain inference for stationary multivariate Hawkes processes under mild conditions, bridging the gap between theory and application. By developing upper-bounds on the reduced cumulant measures from the cluster representation of the Hawkes processes, we prove a functional central limit theorem and, as a consequence, consistency of the Whittle estimator under stationarity alone (i.e., the spectral radius of the interactions matrix $\rho(\boldsymbol\nu)<1$), applicable to Hawkes processes with heavy-tailed mutual-excitation kernels. Under mild extra moment conditions, we further obtain asymptotic normality with an explicit limiting covariance in terms of second- and fourth-order cumulant spectral densities. We also propose a simple frequency-domain method to detect joint independence of subprocesses of a multivariate Hawkes process. The performance of the Whittle estimator and the test of independence are demonstrated via simulation studies.

\end{abstract}

\section{Introduction}
Hawkes processes \citep{Hawkes1971} are a prominent class of self- and mutually-exciting point-process models with broad applications (e.g.\ seismology, finance, and neuroscience; see \citet{reinhart2018review} for a comprehensive review). Despite their popularity, likelihood-based inference \citep{ogata1978} can be computationally burdensome at large event counts and in higher dimensions: even with algorithmic improvements, likelihood evaluation and optimization can remain a bottleneck for large-scale multivariate data \citep{kresin2023parametric}. This has motivated a wide range of alternative estimators, including approximate-likelihood methods \citep{Schoenberg2013}, as well as Bayesian approaches relying on numerical approximation \citep{rasmussen2013bayesian}.

Frequency-domain methods offer a powerful alternative to likelihood-based inference for dependent data. A central tool is the Whittle likelihood \citep{whittle1953analysis,whittle1954stationary}, which replaces the exact likelihood by a Gaussian likelihood of the Fourier transform of the observations. This approximation yields computationally inexpensive estimators with strong statistical properties in many settings \citep{Dahlhaus1988,Brillinger2001}. For temporal point processes, the natural quantity of interest in the frequency domain is the Bartlett spectral density (and, in the multivariate case, the Bartlett spectral density matrix), defined as the Fourier transform of the second-order reduced cumulant measures \citep{Bartlett1963,Brillinger1972,Daley_and_Vere-Jones2003}.

A multivariate stationary Hawkes process is uniquely defined by its first- and second-order statistics \citep[Corollary 1]{Bacry_and_Muzy2016}. This makes Whittle likelihood particularly useful for inference of Hawkes processes because it concerns only the first- and second-order information. Indeed, the Whittle likelihood leverages the Bartlett spectral density matrix, which for stationary multivariate linear Hawkes processes admits an explicit analytic form \citep[Example 8.3(c)]{Daley_and_Vere-Jones2003}. To our knowledge, the first ever use of Whittle likelihood in fitting Hawkes processes is \citet{Adamopoulos1976} who studied earthquake data. However, rigorous asymptotic theory for Whittle estimation of Hawkes processes has developed more slowly than its time-series counterpart. \citet{Tuan1981} first proved some asymptotic properties of the Whittle estimator of univariate Hawkes processes under strong assumptions on the self-excitation kernel. \citet{Cheysson2022_and_Lang2022} studied the binned observations of a univariate Hawkes process, which is equivalent to a realization of an integer-valued time series, and managed to establish the $\alpha$-mixing of the time series by assuming the existence of certain moments of the self-excitation kernel. Then the asymptotic properties of the Whittle estimator can then be shown by employing known results in time series analysis (see e.g. \citet{Dzhaparidze1986}). \citet{Yang_and_Guan2026} derived asymptotic properties related to Whittle likelihood for spatial point processes (with univariate Hawkes processes as special examples) by imposing conditions such as $\alpha$-mixing of the point process, and integrability and smoothness of cumulant spectral density functions. However, the assumptions made in these papers exclude Hawkes processes with heavy-tailed mutual excitation kernels (e.g. kernels which do not admit first moment) representing long-lasting effects. This kind of kernels have proven useful in fields such as seismology and finance, and continue to gain popularity. (\citealt{Vere-Jones_and_Davies1966, Hawkes2018, Chen_et_al2021, Davis_et_al2024, Dupret_and_Hainaut2025, Gupta_and_Maheshwari2026}).

In this paper, we develop frequency-domain asymptotic theory for stationary multivariate linear Hawkes processes observed in continuous time: we observe the marked event sequence (event times and types) on $[0,T]$ and let $T\to\infty$. We consider Whittle estimation based on a parametric family of Bartlett spectral density matrices $\{\Psi_\theta(\omega):\theta\in\Theta\}$ induced by a parametric Hawkes specification (e.g.\ $\theta$ collects background rates, branching coefficients, and any kernel-shape parameters), and we also develop a simple frequency-domain test of independence of subprocesses of Hawkes process. Our analysis builds on the cumulant framework initiated by \citet{Brillinger1972} and developed for Hawkes processes via explicit tree representations of cumulant measures derived in \citep{Jovanovic_et_al2015}. The main contributions are:

\begin{enumerate}
    \item We study a spectral empirical process, defined as the sum of quadratic form of finite Fourier transform over Fourier frequencies, and establish a functional Central Limit Theorem (CLT) under slight conditions on mutual-excitation kernels. The proof is based on results pertaining to reduced cumulant measures without invoking $\alpha$-mixing.
    \item We define the Whittle likelihood and estimator with the help of the spectral empirical process. The consistency is then proven under the sole requirement of stationarity. In our normalized-kernel parameterization, this essentially means that the only assumption needed to establish consistency is $\rho(\boldsymbol{\nu})<1$, where  $\boldsymbol{\nu}$ is the interactions matrix and $\rho(\cdot)$ denotes the spectral radius.
    \item Under additional, and yet still mild moment conditions on the mutual-excitation kernels, we derive asymptotic normality of the Whittle estimator with an explicit limiting covariance expressed in terms of second- and fourth-order cumulant spectral densities thanks to the functional CLT of the spectral empirical process.
    \item We construct a simple test of independence of subprocesses of a multivariate stationary Hawkes process based on an estimation procedure for the spectral density matrix at frequency $0$ inspired by \citet{McElroy_and_Politis2025}.
\end{enumerate}

The remainder of the paper proceeds from probabilistic structure to inferential consequences. Section~\ref{section:hawkes} introduces multivariate Hawkes processes, cumulant measures, and the finite Fourier transform. Section~\ref{sect_spectral} then develops the frequency-domain machinery: Section~\ref{spec_ep_sect} establishes weak convergence of the spectral empirical process, Section~\ref{Whttiel_sect} develops the Whittle likelihood and proves consistency and asymptotic normality, and Section~\ref{section_ind_test} proposes a simple frequency-domain test of independence among subprocesses. Numerical experiments are presented in Section~\ref{section_numerical_experiment}, and Section~\ref{section_conclusion} concludes. Tables and figures, technical proofs and auxiliary results on cumulant measures, Fourier-transform cumulants, and deterministic approximation bounds are collected in the Appendices.

\section{Multivariate Hawkes processes and preliminary notation}\label{section:hawkes}
This section first defines stationary multivariate Hawkes processes \citep{Hawkes1971} and recalls their cluster representation \citep{hawkes1974cluster}. We then introduce cumulant and reduced cumulant measures, whose finiteness is the key structural input behind our frequency-domain asymptotics. Finally, we define the cumulant spectral densities and the finite Fourier transform, which are the main ingredients in Sections~\ref{spec_ep_sect}--\ref{section_ind_test}.

A $D$-variate temporal point process $N=(N_1,\ldots,N_D)$ on $\mathbb{R}$ can be viewed as a collection of random counting measures, where $N_j(B)$ counts the number of type-$j$ events falling in a Borel set $B\subset\mathbb{R}$. $N_1,\ldots,N_D$ are often called subprocesses of $N$. We say that $N$ is (strictly) stationary if for every $s\in\mathbb{N}$, every choice of indices $(j_1,\ldots,j_s)$, and every collection of Borel sets $(B_1,\ldots,B_s)$, the joint law of $(N_{j_1}(B_1),\ldots,N_{j_s}(B_s))$ is invariant under common time shifts \citep[Section 3.2]{Daley_and_Vere-Jones2003}, i.e.
\[
(N_{j_1}(B_1),\ldots,N_{j_s}(B_s)) \stackrel{d}{=} (N_{j_1}(B_1+t),\ldots,N_{j_s}(B_s+t)), \qquad t\in\mathbb{R}.
\]

We will now introduce the point process model of interest to this paper.
\begin{definition}[Multivariate (linear) Hawkes process]\label{HP_def} 
Let $N=(N_1,\ldots,N_D)$ be a $D$-variate simple point process on $\mathbb R$ with natural filtration $\mathcal H_{t-}=\sigma\{N_j((-\infty,s]): s<t,\ j=1,\ldots,D\}$. Denote by $(t^{(j)}_k)_{k\ge1}$ the event times of $N_j$. We say that $N$ is a (linear) Hawkes process with background rate
$\boldsymbol\mu=(\mu_1,\ldots,\mu_D)^\top$, interactions matrix $\boldsymbol\nu=\{\nu_{ij}\}_{i,j=1,\cdots,D}$, and mutual-excitation kernels
$\boldsymbol G=\{g_{ij}\}_{i,j=1}^D$ if its conditional intensities satisfy, for $i=1,\ldots,D$,
\[
\lambda_i(t\mid\mathcal H_{t-})
= \mu_i + \sum_{j=1}^D \nu_{ij}\sum_{k: t^{(j)}_k<t} g_{ij}(t-t^{(j)}_k).
\]
Here $\mu_i>0$, $\nu_{ij}\ge0$, and each $g_{ij}:\mathbb{R}\to[0,\infty)$ satisfies $\int_0^\infty g_{ij}(u)\,du=1$ and $g(x)=0$ for $x<0$. The conditional intensity is understood in the usual sense: for each $i$,
\[
\mathrm{E}[N_i((t,t+\Delta])\mid \mathcal H_{t-}]
= \int_t^{t+\Delta}\lambda_i(s\mid \mathcal H_{s-})\,ds
= \lambda_i(t\mid \mathcal H_{t-})\Delta+o(\Delta),\qquad \Delta\downarrow0,
\]
where $\mathrm{E}$ is the expectation operator. If the spectral radius of $\boldsymbol{\nu}$, denoted by $\rho(\boldsymbol\nu)$, is smaller than 1, then a stationary version exists (and is unique in law); see \citet[Example 8.3(c)]{Daley_and_Vere-Jones2003} or \cite{Hawkes1971}.
\end{definition}

The stationary temporal Hawkes processes admit a Poisson cluster representation~\cite[Example 8.3(c)]{Daley_and_Vere-Jones2003}, which mimics many real-world phenomena, such as the triggering-triggered effect in epidemics and seismology \citep{reinhart2018review}. To facilitate exposition, we view multivariate processes as point processes on $\mathbb{R}\times\{1,\cdots,D\}$ endowed with base measure $l_{\mathrm{Leb}}\times\#$, where $l_{\mathrm{Leb}}$ is the Lebesgue measure on $\mathbb{R}$ and $\#$ is the counting measure on $\{1,\cdots,D\}$. 

Concretely, a multivariate stationary Hawkes process is a collection of immigrants and offspring. The immigrants form a Poisson process with intensity $r_b((t,i))=\mu_i$. Each immigrant $(t_\ast,j)$ then independently produces offspring according to an inhomogeneous Poisson process with intensity
\[
r_d((t,i)\mid (t_\ast,j))=\nu_{ij}g_{ij}(t-t_\ast).
\]
Offspring reproduce in the same manner, generating i.i.d.\ clusters attached to each immigrant. Let $\rho(\cdot)$ denote the spectral radius. The interactions matrix $\boldsymbol{\nu}=\{\nu_{ij}\}_{i,j=1,\cdots,D}$ is therefore the mean offspring (branching) matrix, and the condition $\rho(\boldsymbol{\nu})<1$ ensures subcriticality, finiteness of clusters almost surely, and existence of a stationary version.

In line with \citet{Brillinger1972} and \citet{Tuan1981}, we will rely on the (reduced) cumulant measure of Hawkes processes and their Fourier transforms in the proofs. Recall (cf. equation (2.3.1) of \citet{Brillinger2001}) that the joint cumulant of (complex-valued) random variables $X_1,\cdots,X_s$ is defined as
\begin{align}\label{def_cum_rv}
\mathrm{cum}(X_1,\cdots,X_s)
=\sum_{q=1}^s (-1)^{q-1}(q-1)!\sum_{\{\Xi_{1},\cdots,\Xi_{q}\}}\left(\mathrm{E}\prod_{r\in \Xi_{1}}X_{r}\right)\cdots\left(\mathrm{E}\prod_{r\in \Xi_{q}}X_{r}\right),    
\end{align}
where the summation $\sum_{\{\Xi_{1},\cdots,\Xi_{q}\}}$ is over all partitions of $\{1,\cdots,s\}$ with $q$ blocks. This concept can be extended to point processes in the following way.
\begin{definition}[Cumulant measure and reduced cumulant measure]\label{HPCum_def}
Let $N$ be a $D$-variate stationary process. For any $s\in\mathbb{N}$ and any $j_1,\cdots,j_s\in\{1,\cdots,D\}$, the $j_1\cdots j_s$-th cumulant measure $C_{j_1\cdots j_s}$ of $N$ is defined for Borel rectangles $A_1\times\cdots\times A_s$ by 
\begin{align}\label{cum_def0}
C_{j_1\cdots j_s}(A_1\times\cdots\times A_s)
=\mathrm{cum}(N_{j_1}(A_1),\cdots,N_{j_s}(A_s)),
\end{align}
and extended to a signed measure on the product Borel $\sigma$-algebra in $\mathbb{R}^{s}$. $C_{j_1\cdots j_s}$ exists provided that the expectations on the right-hand side of \eqref{cum_def0} exist. We refer to section 9.5 of \citet{Daley_and_Vere-Jones2008_volume2} for more information about the above-mentioned expectations.

For $s\geqslant2$, since the cumulant measure is translation invariant due to the stationarity of $N$, there exists a signed measure $C_{j_1\cdots j_s}^{\mathrm{red}}$ defined on $\mathbb{R}^{s-1}$ such that
\begin{align*}
\mathrm{d}C_{j_1\cdots j_s}(x_1,\cdots,x_s)=\mathrm{d}C_{j_1\cdots j_s}^{\mathrm{red}}(x_1 - x_s,\cdots,x_{s-1} - x_s)\mathrm{d}x_s,
\end{align*}
if ${C}_{j_1\cdots j_s}$ exists. We call $C_{j_1\cdots j_s}^{\mathrm{red}}$ a reduced cumulant measure.
\end{definition}

By combining \eqref{def_cum_rv}, \eqref{cum_def0} and the Campbell theorem (see equations (9.5.2) and (9.5.9) of \citet{Daley_and_Vere-Jones2008_volume2}), it can be seen that if the cumulant measure $C_{j_1\cdots j_s}(\cdot)$ of $N$ exists, then the following equation holds:
\begin{align}\label{cum_def1}
\mathrm{cum}\left\{\int_{\mathbb{R}}h_{k}(x)\mathrm{d}N_{j_k}(x),k=1,\cdots,s\right\} = \int_{\mathbb{R}^s}\prod_{k=1}^{s}h_{k}(x_k)\mathrm{d}C_{j_1\cdots j_s}(x_1,\cdots,x_{s}),
\end{align}
where $h_1,\cdots,h_s$ are any bounded measurable (complex-valued) functions with bounded support. Equation~\eqref{cum_def1} is important to our proofs as it connects the cumulant of stochastic integrals to a signed measure which is much easier to deal with.

When $s = 1$, the stationarity of $N$ means that the first-order cumulant measure $C_j(\cdot)=\mathrm{E}N_j(\cdot)$ is invariant under translation, which in turn implies that $C_{j} = \lambda_j l_{\mathrm{Leb}}$, $j=1,\cdots,D$, where $\lambda_j = \mathrm{E}N_j((0,1))$ is the average intensity of $N_j$ and $l_{\mathrm{Leb}}$ is the Lebesgue measure on $\mathbb{R}$ (see e.g. Theorem A.1.8 of \citet{Bremaud2020}). In particular, the average intensity vector of a multivariate stationary Hawkes process satisfies $\boldsymbol{\lambda}=(\lambda_1,\cdots,\lambda_D)^{\top}=(\boldsymbol{\mathrm{I}}_D-\boldsymbol{\nu})^{-1}\boldsymbol{\mu}$~\cite[Example 8.3(c)]{Daley_and_Vere-Jones2003}, where $\boldsymbol{\mathrm{I}}_D$ is $D\times D$ identity matrix, $\boldsymbol{\nu}$ and $\boldsymbol{\mu}$ are given in Definition~\ref{HP_def}. For $s\geqslant2$, it turned out that the reduced cumulant measures of a multivariate stationary Hawkes process are finite measures.
\begin{proposition}\label{HPCum_bound_prop}
Let $N$ be a multivariate stationary Hawkes process in Definition~\ref{HP_def}. For any integer $s\geqslant 2$ and any $j_1,\cdots,j_s\in\{1,\cdots,D\}$, the reduced cumulant measure $C_{j_1\cdots j_s}^{\mathrm{red}}$ of $N$ in Definition~\ref{HPCum_def} exists and is a finite measure. Moreover, we have
\begin{align*}
C_{j_1\cdots j_s}^{\mathrm{red}}(\mathbb{R}^{s-1}) \leqslant C_0^s (s-1)!,
\end{align*}
where $C_0>0$ is a constant solely determined by $D$, $\boldsymbol{\mu}$ and $\boldsymbol{\nu}$ which are given in Definition~\ref{HP_def}.
\end{proposition}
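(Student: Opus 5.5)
The plan is to work entirely through the Poisson cluster representation, so that the reduced cumulant measures become moments of cluster sizes. These do not depend on the kernels $g_{ij}$, and they have exponential moments precisely because $\rho(\boldsymbol\nu)<1$.

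\textbf{Step 1 (cumulants of a Poisson cluster process).} The immigrants form a Poisson process with intensity $\mu_k$ on type $k$, and each immigrant independently generates a cluster. Hence, for bounded Borel $A_1,\dots,A_s$, the log-Laplace functional is a Poisson integral over immigrants. Expanding it gives the standard identity
\[
C_{j_1\cdots j_s}(A_1\times\cdots\times A_s)=\sum_{k=1}^D\mu_k\int_{\mathbb R}\mathrm{E}\Bigl[\prod_{r=1}^s N^{c,(t,k)}_{j_r}(A_r)\Bigr]\,dt .
\]
Here $N^{c,(t,k)}$ denotes the cluster (including the ancestor) generated by an immigrant at $(t,k)$, and the product is the ordinary, non-factorial, moment. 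This works because the cumulants of a Poisson integral are integrals of the moments of the marks.

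The right-hand side is a nonnegative measure, so $C_{j_1\cdots j_s}$ is a positive measure. Translation invariance in $t$ yields the reduced measure. Letting $A_1,\dots,A_{s-1}\uparrow\mathbb R$ and taking $A_s=[0,1]$, we get by monotone convergence
\[
C^{\mathrm{red}}_{j_1\cdots j_s}(\mathbb R^{s-1})=\sum_{k}\mu_k\,\mathrm{E}\Bigl[\prod_{r=1}^s Z^{(k)}_{j_r}\Bigr]\le \sum_k\mu_k\,\mathrm{E}\bigl[(S^{(k)})^s\bigr].
\]
In this bound, $Z^{(k)}_j$ is the total number of type-$j$ points in a cluster rooted at type $k$, and $S^{(k)}=\sum_j Z^{(k)}_j$. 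To be careful about existence, I would do this first with truncated clusters (finitely many generations) and then pass to the limit monotonically.

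\textbf{Step 2 (exponential moments of the total progeny; the main obstacle).} The offspring counts form a multitype Galton--Watson process with Poisson offspring: a type-$i$ parent has $\mathrm{Poisson}(\nu_{ji})$ children of type $j$. Define $m_i(\theta)=\mathrm{E}e^{\theta S^{(i)}}$. Its $n$-generation truncations $m^{(n)}$ satisfy
\[
m^{(n+1)}_i=e^{\theta}\exp\Bigl(\sum_j\nu_{ji}\bigl(m^{(n)}_j-1\bigr)\bigr)=:F_\theta(m^{(n)})_i ,
\]
and they increase in $n$. At $(\theta,m)=(0,\mathbf 1)$ the Jacobian of $m-F_\theta(m)$ is $\boldsymbol{\mathrm I}_D-\boldsymbol\nu^\top$, which is invertible since $\rho(\boldsymbol\nu)<1$. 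By the implicit function theorem there is a finite fixed point $m^\ast(\theta)$ for $\theta\in[0,\theta_0]$.

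It remains to show $m^{(n)}\le m^\ast(\theta)$ for all $n$, which makes $\mathrm{E}e^{\theta_0 S^{(i)}}<\infty$. This follows by induction, since $F_\theta$ is coordinatewise monotone and $m^{(0)}=e^\theta\mathbf 1\le m^\ast$. Getting this domination cleanly is the delicate point. As an alternative, I would use a positive vector $v$ with $\boldsymbol\nu^\top v\le\rho' v$ for some $\rho'<1$, and show directly that $F_\theta$ maps the box $\{\mathbf 1\le m\le \mathbf 1+\epsilon v\}$ into itself for small $\theta$. This gives an invariant region without appealing to the implicit function theorem.

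\textbf{Step 3 (conclusion).} From $x^s\le s!\,\theta_0^{-s}e^{\theta_0 x}$ we get
\[
\mathrm{E}(S^{(k)})^s\le s!\,\theta_0^{-s}\max_i m_i(\theta_0).
\]
Using $s!\le 2^{s}(s-1)!$, this gives $C^{\mathrm{red}}_{j_1\cdots j_s}(\mathbb R^{s-1})\le C_0^s(s-1)!$ with
\[
C_0=\max\Bigl\{1,\ \tfrac{2}{\theta_0}\bigl(\textstyle\sum_k\mu_k\bigr)\max_i m_i(\theta_0)\Bigr\}.
\]
The constant $C_0\ge1$ absorbs the prefactor. Since $\theta_0$ and $m(\theta_0)$ depend only on $D$ and $\boldsymbol\nu$, the constant $C_0$ depends only on $D$, $\boldsymbol\mu$ and $\boldsymbol\nu$.
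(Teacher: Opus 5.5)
Your argument is correct in substance, and it takes a genuinely different route from the paper.

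\textbf{The paper's route.} The paper relies on the tree representation of Jovanovic et al. It writes $C^{\mathrm{red}}_{j_1\cdots j_s}(\mathbb{R}^{s-1})$ as a sum over rooted multifurcating trees with $s$ labelled leaves. Each tree's contribution is bounded by $\lambda R^{2s-2}D^{s-1}$, where $R$ and $\lambda$ are built from $(\boldsymbol{\mathrm{I}}_D-\boldsymbol\nu)^{-1}$ and $\boldsymbol\mu$. The factor $(s-1)!$ then comes from the asymptotic count of such trees (Schr\"oder numbers, OEIS A000311) combined with Stirling. Existence, positivity and finiteness are imported from Leblanc and from equations (24) and (50) of Jovanovic et al.

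\textbf{Your route.} You use the Poisson-cluster cumulant formula, which gives the exact identity $C^{\mathrm{red}}_{j_1\cdots j_s}(\mathbb{R}^{s-1})=\sum_k\mu_k\mathrm{E}\prod_r Z^{(k)}_{j_r}$. The factorial growth then comes from an exponential moment of the total progeny of the subcritical multitype Poisson Galton--Watson tree.

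\textbf{What each buys.} Your route is self-contained and gives existence, positivity, finiteness and the bound together. It also shows transparently why the kernels $g_{ij}$ play no role. It handles the atomic (diagonal) parts of the non-factorial cumulant measures automatically, whereas the paper's density $k^{j_1\cdots j_s}$ has to be read as containing Dirac masses. The paper's route gives a closed-form constant, while yours goes through $\theta_0$ and $m(\theta_0)$. These can also be made explicit with your invariant-box argument. For instance, take $v=(\rho'\boldsymbol{\mathrm{I}}_D-\boldsymbol\nu^{\top})^{-1}\mathbf{1}$ for any $\rho'\in(\rho(\boldsymbol\nu),1)$. This $v$ is strictly positive and satisfies $\boldsymbol\nu^\top v<\rho' v$, even when $\boldsymbol\nu$ is reducible.

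\textbf{The domination you flagged.} If you keep the implicit-function version instead, the step $m^{(0)}\leqslant m^\ast$ is easy. From $e^x\geqslant 1+x$, any fixed point satisfies $(\boldsymbol{\mathrm{I}}_D-\boldsymbol\nu^\top)(m^\ast-\mathbf 1)\geqslant\theta\mathbf 1$. Since $(\boldsymbol{\mathrm{I}}_D-\boldsymbol\nu^\top)^{-1}\geqslant 0$ entrywise, this gives $m^\ast\geqslant\mathbf 1$, and hence $m^\ast=F_\theta(m^\ast)\geqslant e^\theta\mathbf 1$.

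\textbf{One slip in Step 3.} You have shown $C^{\mathrm{red}}_{j_1\cdots j_s}(\mathbb{R}^{s-1})\leqslant A(2/\theta_0)^s(s-1)!$ with $A=(\sum_k\mu_k)\max_i m_i(\theta_0)$. However, $A(2/\theta_0)^s\leqslant C_0^s$ fails for your choice $C_0=\max\{1,2A/\theta_0\}$ when $A<1$. For example, $A=10^{-2}$, $2/\theta_0=10$ and $s=3$ give $10>1$. Taking $C_0=(2/\theta_0)\max\{1,A\}$ fixes this.
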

The proof is given in Section~\ref{cum_density_HP_sect} of the Appendices. Basically, Proposition~\ref{HPCum_bound_prop} is a direct consequence of the tree representation pertaining to the cumulant measures proven in \citet{Jovanovic_et_al2015}. 
\begin{remark}
{ A common route to frequency-domain limit theory is to assume a Brillinger-type cumulant condition, namely that for each order $s\geqslant2$ the reduced cumulant measures are finite (or have summable densities), which provides control of long-range dependence and ensures well-behaved cumulants of finite Fourier transforms; see \citet{Brillinger1972}. For stationary multivariate Hawkes processes, Proposition~\ref{HPCum_bound_prop} implies precisely this kind of cumulant summability: for every $s\geqslant2$, the reduced cumulant measures $C^{\mathrm{red}}_{j_1\cdots j_s}$ exist and are finite, with explicit factorial growth bounds. These bounds can be viewed as an analogue of Brillinger’s mixing conditions which are stated in terms of reduced factorial cumulant measures (see e.g. Definition 1 of \citet{Heinrich2016}) and serve as the main input for our non-asymptotic cumulant bounds and the ensuing functional CLT for the spectral empirical process, without requiring $\alpha$-mixing assumptions (see e.g. \citet{Cheysson2022_and_Lang2022}).
}    
\end{remark}

Our limit theory concerning Hawkes processes is formulated in the frequency domain, where higher-order cumulants control non-Gaussian fluctuations of the finite Fourier transforms of the point processes. The Fourier transforms of reduced cumulant measures are very useful in this regard. 
\begin{definition}[Cumulant spectral density]\label{HPcsd_def}
For any integer $s\geqslant 2$ and any $j_1,\cdots,j_s\in\{1,\cdots,D\}$, the cumulant spectral density $f_{j_1\cdots j_s}$ of order $(j_1,\cdots, j_s)$ of a stationary multivariate Hawkes process $N$ is defined as the Fourier transform of reduced cumulant measures $C_{j_1\cdots j_s}^{\mathrm{red}}$, that is,
\begin{align*}
f_{j_1\cdots j_s}(\omega_1,\cdots,\omega_{s-1})
= \int_{\mathbb{R}^{s-1}}\exp\left(-\mathrm{i}\sum_{j=1}^{s-1}\omega_j x_j\right)\mathrm{d}C_{j_1\cdots j_s}^{\mathrm{red}}(x_1,\cdots,x_{s-1}),
\end{align*}
where $\mathrm{i}$ is the imaginary unit. Specifically, when $s=2$, we call $f_{j_1 j_2}$ the $(j_1,j_2)$-th Bartlett spectral density (or simply spectral density) of the Hawkes process and the matrix $\boldsymbol{f}_2 = (f_{j_1 j_2})_{j_1,j_2=1,\cdots,D}$ the spectral density matrix.
\end{definition}

{\begin{remark}
For general multivariate stationary point processes, the coherence $|f_{j_1j_2}|/\sqrt{f_{j_1j_1}f_{j_2j_2}}$, similar to the correlation of two random variables, is a measurement of linear dependency between $N_{j_1}$ and $N_{j_2}$, while $f_{j_1\cdots j_s}$ ($s\geqslant3$) governs higher-order dependence. In particular, $f_{j_1j_2j_3j_4}$ appears in the covariance structure of quadratic forms in finite Fourier transforms (cf.\ Theorem~\ref{Thm_CLT}). Surprisingly, for Hawkes processes, as will be shown in Section~\ref{section_ind_test}, the condition that $f_{j_1j_2}(0) = 0$ for any $j_1\neq j_2$ in fact implies the independence of $N_1,\cdots,N_D$.
\end{remark}
}

The Bartlett spectral density matrix $\boldsymbol{f}_2$ deserves special attention as this is the object of interest of this manuscript. As shown in Example 8.3(c) of \citet{Daley_and_Vere-Jones2003}, $\boldsymbol{f}_2$ can be expressed in terms of $\boldsymbol{\mu}$, $\boldsymbol{\nu}$ and $\boldsymbol{G}$ given in Definition~\ref{HP_def}:
\begin{align}
&\boldsymbol{f}_2(\omega) = (\boldsymbol{\mathrm{I}}_{D} - \boldsymbol{\nu}\odot\hat{\boldsymbol{G}}(\omega))^{-1}\mathrm{Diag}(\boldsymbol{\lambda})(\boldsymbol{\mathrm{I}}_{D} - (\boldsymbol{\nu}\odot\hat{\boldsymbol{G}}(-\omega))^{\top})^{-1} \notag \\
&= (\boldsymbol{\mathrm{I}}_{D} - \boldsymbol{\nu}\odot\hat{\boldsymbol{G}}(\omega))^{-1}\mathrm{Diag}(\boldsymbol{\lambda})\left[(\boldsymbol{\mathrm{I}}_{D} - \boldsymbol{\nu}\odot\hat{\boldsymbol{G}}(\omega))^{-1}\right]^{H}, \label{eq_psdmat}
\end{align}
where $\boldsymbol{\mathrm{I}}_{D}$ is $D\times D$ dimensional identity matrix, $\boldsymbol{\lambda} = (\boldsymbol{\mathrm{I}}_{D} - \boldsymbol{\nu})^{-1}\boldsymbol{\mu}$ is the vector of average intensity, $\hat{g}_{ij}({\omega}) = \int_{\mathbb{R}}\exp(-\mathrm{i}{\omega}x)g_{ij}(x)\mathrm{d}x$ is the Fourier transform of the mutual-excitation kernel $g_{ij}$, $\hat{\boldsymbol{G}} = \{\hat{g}_{ij}\}_{i,j=1,\cdots,D}$ and $\odot$ means the Hadamard (element-wise) product of matrices, and $\boldsymbol{A}^H$ denotes the conjugate transpose of matrix $\boldsymbol{A}$. It can be easily seen that 
$\boldsymbol{f}_2({\omega})$ is Hermitian positive-definite for any ${\omega}\in\mathbb{R}$.

Here is an example of a class of multivariate stationary Hawkes processes.
\begin{example}[Multivariate Fractional Hawkes process]\label{example_FHP}
Let  
\begin{align*}
E_{a,b}(z) = \sum_{n=0}^{\infty}\frac{z^n}{\Gamma(an+b)}    
\end{align*}
be the (two-parameter) Mittag-Leffler function. This function appears frequently in the solution of a variety of fractional differential equations. Recently, this function was employed in constructing a new family of Hawkes processes (see \citet{Chen_et_al2021}, \citet{Habyarimana_et_al2023} and \citet{Davis_et_al2024}). Let $\mathds{1}_A$ be the indicator function of set $A$. It can be verified that $f_{\mathrm{ML}}(x;\beta,c)= c^{\beta}x^{\beta - 1}E_{\beta,\beta}(-(cx)^{\beta})\mathds{1}_{\{x>0\}}$, $c>0, \beta\in(0,1]$ is a probability density function on $(0,+\infty)$ and we call the distribution corresponding to $f_{\mathrm{ML}}(x;c,\beta)$ the Mittag-Leffler distribution. Let $g_{ij}(x) = f_{\mathrm{ML}}(x;\beta_{ij}, c_{ij})$, $c_{ij}>0,\beta_{ij}\in(0,1]$, $i,j=1,\cdots,D$, be the mutual-excitation kernels in Definition~\ref{HP_def}, then the resulting Hawkes process is called a multivariate Fractional Hawkes process \citep{Davis_et_al2026}. It is worth noting that if $\beta=1$, $f_{\mathrm{ML}}(x;1,c)$ is exactly the probability density function of an exponential distribution. It follows that the multivariate Fractional Hawkes process can be viewed as a generalization of the classical Hawkes process (see e.g. \citet{Hawkes1971}). When $\beta<1$, the corresponding Mittag-Leffler distribution is heavy-tailed (i.e. $\int_0^{\infty}  x^{\alpha}f_{\mathrm{ML}}(x;\beta,c)\mathrm{d}x = \infty $ for any $\alpha\geqslant\beta$).

According to \citet{Kozubowski2001}, the Mittag-Leffler distribution is a geometric stable distribution and hence its Fourier transform can be explicitly expressed as
\begin{equation*}
\hat{f}_{\mathrm{ML}}(\omega;\beta,c) = \int_0^{\infty}  \exp(-\mathrm{i}\omega x)f_{\mathrm{ML}}(x;\beta,c)\mathrm{d}x 
=[1+c^{-\beta}(\mathrm{i}\omega)^\beta]^{-1} .
\end{equation*}
With this result, the expression of the spectral density matrix of the multivariate Fractional Hawkes process is readily available and is easy to calculate. Computation of the Mittag-Leffler function, on the other hand, is often practically onerous, despite various algorithms for evaluating $E_{a,b}(z)$ (cf. \citet{Gorenflo_et_al2002} and \citet{Garrappa2015}).
\end{example}

Suppose $\{t_1^{k_1},\cdots,t_{n}^{k_{n}}\}$, $k_1,\cdots,k_{n}\in\{1,\cdots,D\}$ is a realization of a multivariate Hawkes process $N = (N_1,\cdots,N_D)^{\top}$ on time interval $(0,T)$. The finite Fourier transform associated with the realization is defined as $J_T(\omega) := (J_T^{1}(\omega),\cdots,J_T^{D}(\omega))^{\top}$, where
\begin{align*}
J_T^{j}(\omega) = T^{-\frac{1}{2}}\sum_{l=1}^{n} \exp(-\mathrm{i}\omega t_l^{k_l})\mathds{1}_{\{k_l = j\}} = T^{-\frac{1}{2}}\int_0^T \exp(-\mathrm{i} \omega t)N_{j}(\mathrm{d}t).
\end{align*}
As shown in the following section, the finite Fourier transform is a basic yet powerful tool for estimating the spectral density matrix and investigating other features reflected in the second-order properties.

\section{Spectral empirical process, the Whittle likelihood and their applications}\label{sect_spectral}
The periodogram matrix $J_T(\omega)J_T^H(\omega)$ is not, by itself, a consistent estimator of $\boldsymbol f_2(\omega)$ at a fixed frequency. The basic idea of the frequency-domain approach is therefore to aggregate information across Fourier frequencies. This leads to the spectral empirical process, which behaves like a Riemann-sum approximation to an integrated spectral functional and provides the bridge from finite Fourier transforms to estimation and testing. We first study this process in Section~\ref{spec_ep_sect}, then use it in Section~\ref{Whttiel_sect} to analyze the Whittle likelihood, and finally apply the same frequency-domain viewpoint in Section~\ref{section_ind_test} to construct an independence test.

\subsection{Spectral empirical process}\label{spec_ep_sect}
The spectral empirical process is defined as
\begin{align}\label{def_SEP}
    A_T(\Phi) = \frac{1}{T}\sum_{p=1}^{M_T} J_T^{H}(\omega_p)\Phi(\omega_p)J_T(\omega_p),
\end{align}
where $\omega_p = 2\pi p/T$ is the Fourier frequencies, $M_T$ is the number of Fourier frequencies used to construct the spectral empirical process and $\Phi(\cdot)=\{\phi_{ij}(\cdot)\}_{i,j=1,\cdots,D}$ is a matrix of (complex-valued) functions. Heuristically, $A_T(\Phi)$ is a data-based Riemann-sum approximation of the integrated functional
\[
\frac{1}{2\pi}\int_0^{2\pi L}\text{tr} \big(\Phi(\omega)\boldsymbol f_2(\omega)\big) d\omega,
\]
with $L=\lim_{T\rightarrow\infty} M_T/T$. 

For a stationary multivariate Hawkes process, the spectral empirical process enjoys desirable asymptotic properties summarized in the following Theorem~\ref{Thm_CLT}, which is a uniform functional Central Limit Theorem (CLT) for $A_T(\cdot)$, whose proof can be found in Section~\ref{proof_CLT_sect} of the Appendices. Thanks to Proposition~\ref{HPCum_bound_prop}, the proof requires minimal stationarity assumptions and relies only on cumulant summability rather than $\alpha$-mixing.
\begin{theorem}[CLT for the spectral empirical process]\label{Thm_CLT}
Let $\Theta\subset\mathbb{R}^d$ be compact. $\Phi_{\theta}(\cdot) = \{\phi_{\theta,ab}(\cdot)\}_{a,b=1,\cdots,D}$ is an Hermitian matrix of continuous, bounded functions on $\mathbb{R}$ satisfying $\tau_0:=\sup_{\theta\in\Theta}\max_{a,b=1,\cdots,D}\|\phi_{\theta,ab}\|_{\infty}< \infty$. Moreover, there exist constants $C,\beta>0$ such that for any $\theta_1,~\theta_2\in\Theta$, $\max_{a,b=1,\cdots,D}\|\phi_{\theta_1, ab} - \phi_{\theta_2, ab}\|_{\infty} \leqslant C\|\theta_1 - \theta_2\|^{\beta}$. Suppose also $M_T/T \rightarrow L < \infty$. Then
\begin{align*}
T^{1/2}(A_T(\Phi_{\cdot}) - \mathrm{E}A_T(\Phi_{\cdot})) \rightarrow G(\cdot) \mathrm{~~weakly~in~} l^{\infty}(\Theta),
\end{align*}
where $l^{\infty}(\Theta)$ is the set of all
uniformly bounded, real functions on $\Theta$ endowed with uniform distance, $\{G(\theta):\theta\in\Theta\}$ denotes a tight, mean zero Gaussian process with covariance structure
\begin{align*}
&\mathrm{cov}(G(\theta_1) , G(\theta_2)) \\
=&\frac{1}{(2\pi)^2}\int_{[0,2\pi L]^2} \sum_{a_1=1}^{D} \sum_{a_2=1}^{D}\sum_{b_1=1}^{D} \sum_{b_2=1}^{D}\phi_{\theta_1, a_2 a_1}(\omega_{1}) \phi_{\theta_2, b_2 b_1}(\omega_{2})f_{a_1 a_2 b_1 b_2}(\omega_{1},-\omega_{1},\omega_{2})\mathrm{d}\omega_1\mathrm{d}\omega_2 \\
&+ \frac{1}{2\pi}\int_{[0,2\pi L]} \sum_{a_1=1}^{D} \sum_{a_2=1}^{D}\sum_{b_1=1}^{D} \sum_{b_2=1}^{D} \phi_{\theta_1, a_2 a_1}(\omega) \phi_{\theta_2, b_2 b_1}(\omega) f_{a_1 b_2}(\omega)f_{a_2 b_1}(-\omega)\mathrm{d}\omega 
\end{align*}
\end{theorem}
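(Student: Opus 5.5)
The plan is to prove weak convergence in $l^{\infty}(\Theta)$ in the standard two-step way: convergence of finite-dimensional distributions, plus asymptotic tightness (asymptotic uniform equicontinuity) with respect to a suitable semimetric on $\Theta$. Write $Z_T(\theta)=T^{1/2}(A_T(\Phi_\theta)-\mathrm{E}A_T(\Phi_\theta))$. Since $\Phi_\theta$ is Hermitian, $J_T^H\Phi_\theta J_T$ is real, so $Z_T$ takes values in the real functions on $\Theta$.

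\textbf{Finite-dimensional convergence by the method of cumulants.} By Cramér--Wold it suffices to treat a finite linear combination $\sum_k c_k Z_T(\theta_k)$. This is again of the form $T^{1/2}(A_T(\Phi)-\mathrm{E}A_T(\Phi))$ with $\Phi=\sum_k c_k\Phi_{\theta_k}$ bounded and Hermitian. We show that its second cumulant converges to the stated covariance and that all cumulants of order $s\ge 3$ vanish.

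Expanding, $A_T(\Phi)=T^{-1}\sum_p\sum_{a,b}\phi_{ab}(\omega_p)\overline{J_T^a(\omega_p)}J_T^b(\omega_p)$. Each $J_T^j$ is a linear stochastic integral, and $\overline{J_T^a(\omega)}=J_T^a(-\omega)$. The cumulant of products of pairs is then expanded with the Leonov--Shiryaev formula into sums over indecomposable partitions of the $2s$ factors. Each block is a joint cumulant of finite Fourier transforms. By \eqref{cum_def1} and the reduced cumulant measure,
\[
\mathrm{cum}\bigl(J_T^{j_1}(\lambda_1),\dots,J_T^{j_m}(\lambda_m)\bigr)=T^{-m/2}\int \Delta_T\Bigl(\textstyle\sum\lambda_k;x\Bigr)e^{-\mathrm{i}\sum_{k<m}\lambda_k x_k}\,\mathrm{d}C^{\mathrm{red}}_{j_1\cdots j_m}(x),
\]
where $\Delta_T(\lambda;x)$ is a Dirichlet-type kernel, at most $T$ in modulus, with the effects of the $x_k$-shifts confined to the interval edges. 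Proposition~\ref{HPCum_bound_prop} (finiteness of $|C^{\mathrm{red}}|$, i.e.\ of its total variation) gives the Brillinger-type bound
\[
\bigl|\mathrm{cum}(J_T^{j_1},\dots,J_T^{j_m})\bigr|\le C\,T^{1-m/2}\min\Bigl(1,\bigl(T|{\textstyle\sum}\lambda_k|\bigr)^{-1}\Bigr)+o\bigl(T^{1-m/2}\bigr).
\]

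One point needs care. Because only finiteness (no moments) of the reduced measures is available, the error term cannot be $O(T^{-m/2})$. Instead I would show that $T^{-1}\int\bigl|\int_0^T(1-\mathbf{1}\{t+x_k\in[0,T]\ \forall k\})\,\mathrm{d}t\bigr|\,\mathrm{d}|C^{\mathrm{red}}|(x)\to 0$ by dominated convergence. This yields the $o(\cdot)$ term, which is enough for the variance limit.

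Counting, as in Brillinger's Theorem 4.3.2 / 7.6, then shows:
\begin{itemize}
\item The $s$-th cumulant of $Z_T$ is $O(T^{s/2}\,T^{-s}\,T^{s}\,T^{1-s})=O(T^{1-s/2})\to 0$ for $s\ge3$. Here the factors are: $T^{s/2}$ from the normalisation, $T^{-s}$ from the $1/T$ in each $A_T$, $T^{s}$ from the frequency sums, and $T^{1-s}$ from the cumulant bounds, using the decay $\min(1,(T|\cdot|)^{-1})$ to collapse the frequency sums across partition blocks.
\item For $s=2$, the two surviving partition types give the stated covariance. The single fourth-order block gives the $f_{a_1a_2b_1b_2}(\omega_1,-\omega_1,\omega_2)$ term as a double Riemann sum. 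The pairings into two second-order blocks force $\omega_p=\omega_q$ and give $f_{a_1b_2}(\omega)f_{a_2b_1}(-\omega)$. The other pairing forces $\omega_p=-\omega_q$, which never happens for $p,q\ge1$, so it contributes only $o(1)$.
\end{itemize}
Convergence of the Riemann sums uses continuity of $\phi$ and of $f$, which holds since the $f$ are Fourier transforms of finite measures.

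\textbf{Tightness.} Since $\Theta\subset\mathbb{R}^d$ is compact, I would use a chaining argument built on a moment bound for increments. Applying the same cumulant bounds to $\Phi=\Phi_{\theta_1}-\Phi_{\theta_2}$, whose sup-norm is at most $C\|\theta_1-\theta_2\|^\beta$, gives for every integer $k$
\[
\mathrm{E}|Z_T(\theta_1)-Z_T(\theta_2)|^{2k}\le C_k\|\theta_1-\theta_2\|^{2k\beta},
\]
uniformly in $T$. This holds because the $2k$-th moment is a polynomial in cumulants of order $\le 2k$, each bounded uniformly in $T$ by the counting above and homogeneous in $\|\Phi\|_\infty$. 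Choosing $k$ with $2k\beta>d$, a Kolmogorov--Chentsov/chaining bound (e.g.\ van der Vaart--Wellner Thm.\ 2.2.4 with $\psi(x)=x^{2k}$) gives asymptotic equicontinuity with respect to $\|\cdot\|^\beta$. Together with the finite-dimensional limit, this yields weak convergence to a tight Gaussian $G$. The sample paths of $Z_T$ are continuous in $\theta$, so measurability issues are harmless.

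\textbf{Main obstacle.} I expect the main obstacle to be the uniform-in-$T$ cumulant bound for $s\ge3$ under only finite total variation of $C^{\mathrm{red}}$ (no moment conditions on the kernels). Without moments, the edge errors are only $o(T)$ rather than $O(1)$, and one must verify that the frequency-sum collapse still yields $O(T^{1-s/2})$ and not merely $o(T^{\cdot})$ with a lost power. My first attempt would be to bound each block via $|\Delta_T|\le\min(T,\,2/|\sum\lambda|)$ plus the crude bound $T\cdot|C^{\mathrm{red}}|(\mathbb{R}^{m-1})$. I would then apply the Hölder-type summation lemma for products of Dirichlet kernels over indecomposable partitions, where only one frequency constraint per connected component is needed. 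That argument requires only the total mass supplied by Proposition~\ref{HPCum_bound_prop}; the factorial growth $C_0^s(s-1)!$ is not even needed for fixed $s$.
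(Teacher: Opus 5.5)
Your proposal is correct. The finite-dimensional part is essentially the paper's: the cumulant method via Propositions~\ref{cov_sep_prop} and \ref{higher_order_Expectation_prop}(a), plus Brillinger's Lemma P4.5. The fix in your last paragraph is also the paper's device. There you bound each block by $T^{1-m/2}C^{\mathrm{red}}(\mathbb{R}^{m-1})\min\{1,1/(\pi|k|)\}$, using that the inner $t$-integral runs over an interval; this is exactly Corollary~\ref{cum_J_corollary}(b).

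Where you genuinely differ is tightness.
\begin{itemize}
\item \textbf{The paper's route.} It bounds every moment of $S_T(\Phi)$ by $(\max\|\phi\|_\infty)^l C_1^l(2l)!$, which uses the factorial growth $C_0^s(s-1)!$ from Proposition~\ref{HPCum_bound_prop}. It converts this into an exponential tail bound (Proposition~\ref{concentration_ineq_strong_prop}). It then runs a Dahlhaus--Polonik chaining argument, which needs the auxiliary event $\mathcal{B}_T$ to control the tail of the infinite telescoping series.
\item \textbf{Your route.} You take one moment of order $2k$ with $2k\beta>d$. It is bounded uniformly in $T$ and homogeneous of degree $2k$ in $\|\Phi\|_\infty$. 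You then apply van der Vaart--Wellner Theorem 2.2.4 with $\psi(x)=x^{2k}$ and $d(\theta_1,\theta_2)=\|\theta_1-\theta_2\|^{\beta}$. You should replace $\beta$ by $\min\{\beta,1\}$ on the compact $\Theta$ so that this is a semimetric. The entropy integral $\int_0 \epsilon^{-d/(2k\beta)}\,\mathrm{d}\epsilon$ is finite, and continuity of $\theta\mapsto Z_T(\theta)$ gives separability.
\end{itemize}
Your argument is lighter: it needs only finiteness of the reduced cumulant measures up to order $4k$, not their factorial growth, and no $\mathcal{B}_T$. What the paper's route buys is a quantitative exponential maximal inequality (Theorem~\ref{stochastic_conv_rate_thm}), which it reuses for uniform convergence in the consistency proof. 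Your tightness result would also give the needed $\sup_\theta|A_T-\mathrm{E}A_T|\to 0$, just without explicit tails.

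Two small points need tightening.
\begin{itemize}
\item The two-block pairing does not literally force $\omega_p=\omega_q$. For $p\neq q$ it leaves products $V_T V_T$ that are not zero. Each factor is bounded by $\min\{\epsilon_T, C/|p-q|\}$, where $\epsilon_T=\int_{|u|<T}(|u|/T)\,\mathrm{d}C^{\mathrm{red}}(u)\to 0$. Splitting the sum at $|p-q|\approx C/\epsilon_T$ shows their total contribution is $O(T^{-1}M_T\epsilon_T)=o(1)$. The paper obtains this via dominated convergence. The crude bound $C/|p-q|$ alone gives only $O(1)$.
\item Collapsing the frequency sums with the $1/|k|$ decay produces logarithmic factors. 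So the order-$s$ cumulant is $O(T^{1-s/2}(\log T)^{s})$ rather than $O(T^{1-s/2})$. This is harmless for $s\ge 3$, both for the CLT and for your uniform $2k$-th moment bound.
\end{itemize}
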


\subsection{Whittle likelihood and estimator}\label{Whttiel_sect}
We now specialize the spectral empirical process to the model-based choice $\Phi=\Psi_\theta^{-1}$, which yields the Whittle objective. This converts the second-order structure of the Hawkes process into a tractable frequency-domain criterion. We first study its deterministic limit (which plays the role of a frequency-domain Kullback Leibler contrast), and then use that limit to establish consistency and asymptotic normality of the resulting estimator.

Inspired by Theorem 4.2 of \citet{Brillinger1972} which basically states that the finite Fourier transforms $J_T(x_1),\cdots,J_T(x_k)$ with distinct positive frequencies $x_1,\cdots,x_k$ are asymptotically independent complex normal variates, we define the negative log-Whittle likelihood as a consequence of pseudo-Gaussian likelihood:
\begin{align}\label{Whittle_llike_def_eq}
l_W(\theta) 
= A_T(\Psi_{\theta}^{-1}) + \frac{1}{T}\sum_{p=1}^{M_T} \log\det\Psi_{\theta}(\omega_p)
= \frac{1}{T}\sum_{p=1}^{M_T}\left[J_T^{H}(\omega_p)\Psi_{\theta}^{-1}(\omega_p)J_T(\omega_j) + \log\det\Psi_{\theta}(\omega_p)\right] ,
\end{align}
where $\{\Psi_{\theta}:\theta\in\Theta\}$ is a family of spectral density matrices of stationary Hawkes processes and $\Theta\subset\mathbb{R}^d$ is the parameter space. The Whittle estimator $\hat{\theta}_W$ is the minimizer of $l_W$. We will later impose suitable conditions to ensure the existence and uniqueness of $\hat{\theta}_W$. 
{\begin{remark}
The Whittle likelihood replaces the point-process likelihood by a frequency-domain pseudo-likelihood built from (approximately) uncorrelated Fourier ordinates. If the number of Fourier frequencies $M_T$ satisfies $M_T/T \rightarrow L\in(0,+\infty)$ when $T\rightarrow\infty$, then the negative log-Whittle likelihood can be shown to converge in probability to 
\begin{align}\label{eq_freq_KL}
h_L(\theta) = \frac{1}{2\pi}\int_0^{2\pi L}\left[\log\det\Psi_{\theta}(x) + \mathrm{tr}(\boldsymbol{f}_2(x)\Psi_{\theta}^{-1}(x))  \right]\mathrm{d}x .    
\end{align}
Equation~\eqref{eq_freq_KL} can be regarded as a matrix-valued KL-type divergence on a frequency band $[0,2\pi L]$. The following Proposition~\ref{prop_freq_KL_min} indicates that \eqref{eq_freq_KL} is minimized uniquely when $\Psi_\theta(\omega)=\boldsymbol f_2(\omega)$ on the band. The limit $L$ of $M_T/T$ controls the asymptotic frequency range used by the estimator.
\end{remark}
} 

\begin{proposition}\label{prop_freq_KL_min}
Suppose $\inf_{\theta\in\Theta}\inf_{x\in\mathbb{R}}\det\Psi_{\theta}(x)>0$ and $\Psi_{\theta}(\cdot)$ is continuous for each $\theta\in\Theta$, then
\begin{align*}
    h_L(\theta) \geqslant \frac{1}{2\pi}\int_0^{2\pi L}\log\det\boldsymbol{f}_2(x)\mathrm{d}x + LD .
\end{align*} 
The above inequality becomes an equation if and only if $\Psi_{\theta}(x) = \boldsymbol{f}_2(x)$ for all $x\in[0,2\pi L]$.
\begin{proof}
Note that we can re-write $h_L$ as
\begin{align*}
&h_L(\theta)= \frac{1}{2\pi}\int_0^{2\pi L}\left[-\log\det\boldsymbol{f}_2^{1/2}(x)\Psi_{\theta}^{-1}(x)\boldsymbol{f}_2^{1/2}(x) 
+ \mathrm{tr}(\boldsymbol{f}_2^{1/2}(x)\Psi_{\theta}^{-1}(x)\boldsymbol{f}_2^{1/2}(x))  \right]\mathrm{d}x \\
&+ \frac{1}{2\pi}\int_0^{2\pi L}\log\det\boldsymbol{f}_2(x)\mathrm{d}x .
\end{align*} 
The desired inequality can then be obtained by applying Proposition~\ref{proposition_log_trace_ineq} in the Appendices. Note that $\boldsymbol{f}_2^{1/2}(\cdot)\Psi_{\theta}^{-1}(\cdot)\boldsymbol{f}_2^{1/2}(\cdot)$ is positive-definite and $\log\det\Psi_{\theta}(\cdot) + \mathrm{tr}(\boldsymbol{f}_2(\cdot)\Psi_{\theta}^{-1}(\cdot))$ is a continuous function for each $\theta\in\Theta$. The condition for the equality to hold also follows from Proposition~\ref{proposition_log_trace_ineq}.
\end{proof}
\end{proposition}

Here are some basic assumptions on the model. When deriving the asymptotic normality of the Whittle likelihood, we will make additional assumptions about the smoothness of $\Psi_{\theta}$ with respect to $\theta$.
\begin{assumption}\label{assumption_model} The following regularity conditions are assumed for the remainder of this paper:
\begin{enumerate}[(1)]
\item The parameter space $\Theta\subset\mathbb{R}^d$ is compact.
\item $\Psi_{\theta}^{-1}(\cdot)$ exists and is an Hermitian matrix of uniformly continuous, bounded functions. Moreover, all eigenvalues of $\Psi_{\theta}^{-1}(\omega)$ are bounded from below by a constant $c_1>0$ and from above by a constant $c_2>0$ uniformly in $\omega$ and $\theta$, and there exist constants $C,\beta>0$ such that for any $\tau_1,~\tau_2\in\Theta$, $\max_{a,b}\|[\Psi_{\tau_1}^{-1}(\cdot)]_{ab} - [\Psi_{\tau_2}^{-1}(\cdot)]_{ab}\|_{\infty} \leqslant C\|\tau_1 - \tau_2\|^{\beta}$.
\item There exists $\theta_0$ lying in the interior of $\Theta$ such that $\boldsymbol{f}_2(x) = \Psi_{\theta_0}(x)$ for all $x\in\mathbb{R}$. Moreover, there exists a constant $L>0$ such that for any $\theta\neq\theta_0$, we have $h_L(\theta)> h_L(\theta_0)$.
\end{enumerate}

\end{assumption}
\begin{remark}
As a consequence of Assumption~\ref{assumption_model}.(2), we have $\max_{a,b}\sup_{\theta\in\Theta}\|[\Psi_{\theta}^{-1}(\cdot)]_{ab}\|_{\infty}< \infty$ and $\inf_{\theta\in\Theta}\inf_{x\in\mathbb{R}}\det\Psi_{\theta}^{-1}(x)>0$. Assumption~\ref{assumption_model}.(3) says that $\theta_0$ is a unique global minimum point of $h_L(\cdot)$ on $\Theta$ for some $L>0$, which ensures model identifiability. Proposition~\ref{prop_freq_KL_min} and Assumption~\ref{assumption_model}.(3) together guarantee that if $L$ satisfies Assumption~\ref{assumption_model}.(3), then any $\tilde{L}>L$ also satisfies Assumption~\ref{assumption_model}.(3).
\end{remark}
The following theorem states the consistency of the Whittle estimator when the underlying Hawkes process is stationary and the parametric family satisfies the regularity conditions in Assumption~\ref{assumption_model}.

\begin{theorem}[Consistency of Whittle estimator]\label{theorem_consistency_WE}
Suppose Assumption~\ref{assumption_model} holds. Let $L$ be the constant in Assumption~\ref{assumption_model}.(3). Provided that $M_T/T \rightarrow L < \infty$ when $T\rightarrow\infty$, we will have $\hat{\theta}_W \rightarrow\theta_0$ in $\mathrm{P}$-probability.
\begin{proof}
It can be derived from Theorems~\ref{stochastic_conv_rate_thm} and \ref{deterministic_conv_thm} in the Appendices that
\begin{align*}
\sup_{\theta\in\Theta}\left|l_{W}(\theta) - h_L(\theta)\right| \rightarrow 0
\end{align*}
in $\mathrm{P}$-probability, where $l_{W}$ is the negative log-Whittle likelihood defined in \eqref{Whittle_llike_def_eq}. Recall that $\hat{\theta}_W$ and $\theta_0$ are minimizers of $l_{W}$ and $h_L$, respectively. Therefore,
\begin{align*}
&0 \leqslant h_L(\hat{\theta}_W) - h_L(\theta_0)
= (h_L(\hat{\theta}_W) - l_W(\hat{\theta}_W)) + (l_W(\hat{\theta}_W) - l_{W}(\theta_0)) + (l_{W}(\theta_0) - h_L(\theta_0))  \\
&\leqslant 2\sup_{\theta\in\Theta}\left|h_L(\theta) - l_{W}(\theta)\right|\rightarrow 0
\end{align*}
in $\mathrm{P}$-probability. As $\theta_0$ is the unique minimizer of $h_L$ lying in the interior of $\Theta$ by assumption and that $h_L$ is continuous on $\Theta$, we get $\hat{\theta}_W \rightarrow\theta_0$ in $\mathrm{P}$-probability.
\end{proof}
\end{theorem}

We can further derive the asymptotic normality of the Whittle estimator. However, stronger assumptions such as some smoothness requirements with respect to both the parameter and frequency on the model and the true spectral density matrix need to be imposed. Let $\partial_{i} = \frac{\partial}{\partial\theta_i}$, $\partial_{ij} = \frac{\partial^2}{\partial\theta_i\theta_j}$, $\nabla = (\partial_1,\cdots,\partial_d)^{\top}$ and $\nabla^2 = \{\partial_{ij}\}_{i,j=1\cdots,d}$. For any $i,j=1,\cdots,d$, denote $\partial_{i}\Psi_{\theta}^{-1}(\cdot) = \{\partial_{i}[\Psi_{\theta}^{-1}(\cdot)]_{ab}\}_{a,b=1,\cdots,D}$ and $\partial_{ij}\Psi_{\theta}^{-1}(\cdot) = \{\partial_{ij}[\Psi_{\theta}^{-1}(\cdot)]_{ab}\}_{a,b=1,\cdots,D}$.
\begin{theorem}\label{thm_asymptotic_normality_Whittle}
Suppose Assumption~\ref{assumption_model} holds.  Moreover, for any $i,j=1\cdots,d$ and any $a,b=1,\cdots,D$, $\partial_{ij}[\Psi_{\theta}^{-1}(\omega)]_{ab}$ exists and is uniformly continuous in $\omega$ and H{\"o}lder continuous in $\theta$. Additionally, there exist constants $C>0$ and $\beta>1/2$ such that $\max_i\max_{a,b}\sup_{\theta\in\Theta}|[\partial_i\Psi_{\theta}^{-1}(x)]_{ab} - [\partial_i\Psi_{\theta}^{-1}(y)]_{ab}|\leqslant C|x-y|^{\beta}$. 

We further assume that the mutual-excitation functions of the underlying Hawkes process satisfy $\int_{\mathbb{R}}|x|^{\alpha}g_{ab}(x)\mathrm{d}x<\infty$, $a,b=1,\cdots,D$, for some $\alpha>1/2$. Let $L$ be the constant in Assumption~\ref{assumption_model}.(3).
Provided that $|M_T/T - L| = o(T^{-1/2})$ when $T\rightarrow\infty$, $\sqrt{T}(\hat{\theta}_W -\theta_0)$ converges in distribution to $ N(\boldsymbol{0}_{d\times1}, \boldsymbol{\Gamma}^{-1}\boldsymbol{V}\boldsymbol{\Gamma}^{-1})$, where $\boldsymbol{\Gamma} = \nabla^2 h_L(\theta_0)$, $\boldsymbol{V} = \{v_{ij}\}_{i,j=1,\cdots,d}$,
\begin{align*}
&v_{ij}=
\frac{1}{(2\pi)^2}\int_{[0,2\pi L]^2} \sum_{a_1=1}^{D} \sum_{a_2=1}^{D}\sum_{b_1=1}^{D} \sum_{b_2=1}^{D}[\partial_i\Psi_{\theta_0}^{-1}(\omega_1)]_{a_2 a_1} [\partial_j\Psi_{\theta_0}^{-1}(\omega_2)]_{b_2 b_1} f_{a_1 a_2 b_1 b_2}(\omega_{1},-\omega_{1},\omega_{2})\mathrm{d}\omega_1\mathrm{d}\omega_2 \\
&+ \frac{1}{2\pi}\int_{[0,2\pi L]} \sum_{a_1=1}^{D} \sum_{a_2=1}^{D}\sum_{b_1=1}^{D} \sum_{b_2=1}^{D} [\partial_i\Psi_{\theta_0}^{-1}(\omega)]_{a_2 a_1} [\partial_j\Psi_{\theta_0}^{-1}(\omega)]_{b_2 b_1} f_{a_1 b_2}(\omega)f_{a_2 b_1}(-\omega)\mathrm{d}\omega .
\end{align*}
\begin{proof}
First, note that since $\theta_0$ lies in the interior of $\Theta$ and minimizes $h_L$, the twice continuous differentiability of $h_L$ implies that the gradient $\nabla h_L(\theta_0) = \boldsymbol{0}_{d\times1}$ and the Hessian matrix $\nabla^2h_L(\theta_0)$ is positive definite. By Theorem~\ref{theorem_consistency_WE}, $\hat{\theta}_W \rightarrow\theta_0$ in $\mathrm{P}$-probability. This implies that the probability of $\partial_i l_W(\hat{\theta}_{W}) = 0$ for all $i=1,\cdots,d$ tends to 1 when $T\rightarrow\infty$. Therefore, by the mean value theorem, 
\begin{align*}
- \partial_i l_W(\theta_0)
=\partial_i l_W(\hat{\theta}_{W}) - \partial_{i} l_W(\theta_0) 
=(\nabla\partial_i l_W(\xi_T^{(i)}))^{\top}(\hat{\theta}_{W} - \theta_0)
\end{align*}
with $\|\xi_T^{(i)} - \theta_0 \|\leqslant\|\hat{\theta}_{W} - \theta_0\|$ for $i=1,\cdots,d$ hold with probability tending to 1. Therefore, for each $i=1,\cdots,d$, $\xi_T^{(i)} \rightarrow\theta_0$ in probability. We can re-write the above equations in matrix form, namely
\begin{align*}
-\nabla l_W(\theta_0) = \boldsymbol{\Gamma}_T(\hat{\theta}_W - \theta_0),    
\end{align*}
where $[\boldsymbol{\Gamma}_T]_{ij} = \partial_{ij}l_W(\xi_T^{(j)})$. Note that by assumption, $[\partial_{ij}\Psi_{\theta}^{-1}(\omega)]_{ab}$ is uniformly continuous in $\omega$ and H{\"o}lder continuous in $\theta$. Theorems~\ref{stochastic_conv_rate_thm} and \ref{deterministic_conv_thm} in the Appendices, and the Leibniz integral rule together imply 
\begin{align*}
\sum_{i,j=1}^{d}\sup_{\theta\in\Theta}|\partial_{ij}l_W(\theta) - \partial_{ij}h_L(\theta)| \rightarrow 0   
\end{align*}
in probability. An application of the continuous mapping theorem then shows $\boldsymbol{\Gamma}_T \rightarrow\nabla^2 h_L(\theta_0)=\boldsymbol{\Gamma}$ in probability. Finally, due to the assumption that $[\partial_{i}\Psi_{\theta}^{-1}(\omega)]_{ab}$ is $\beta$-H{\"o}lder continuous in $\omega$ with $\beta>1/2$ and a finite H{\"o}lder coefficient independent of $\theta$, we have by Theorem~\ref{deterministic_conv_rate_thm} in the Appendices that $\sqrt{T}(\mathrm{E}\nabla l_W(\theta_0) - \nabla h_L(\theta_0))\rightarrow \boldsymbol{0}_{d\times1}$. The proof is concluded by noting that $\sqrt{T}(\nabla l_W(\theta_0) - \mathrm{E}\nabla l_W(\theta_0))$ converges in distribution to $N(\boldsymbol{0}_{d\times1}, \boldsymbol{V})$ due to Theorem~\ref{Thm_CLT} and the Cram\'er-Wold device, and $\nabla h_L(\theta_0) = \boldsymbol{0}_{d\times1}$.

\end{proof}

\end{theorem}

\subsection{Simple test of independence}\label{section_ind_test}
In real-world applications, people are often interested in finding out if the subprocesses of a multivariate point process are independent as it may be related to some scientific questions or it could greatly simplify the inference procedure. For instance, earthquakes in a region often occur along several fault segments, so the associated earthquake catalog may naturally be viewed as a multivariate point process whose labels correspond to the fault segments. Testing for independence among the subprocesses is therefore of practical importance, as it can substantially simplify the modeling task by allowing earthquakes from different fault segments to be treated separately. In the class of stationary multivariate Hawkes process, the independence of subprocesses is in fact encoded in the spectral density matrix at frequency $0$. Let $N=(N_1,\cdots,N_D)$ be a stationary multivariate Hawkes process. In view of the conditional intensity of $N$ in Definition~\ref{HP_def} and the fact that the probability structure of $N$ is uniquely determined by the conditional intensity \citep[Proposition 7.3.IV]{Daley_and_Vere-Jones2003}, one can see that $N_1,\cdots,N_D$ are independent if and only if the interactions matrix $\boldsymbol{\nu}$ is diagonal (i.e. there will be no cross-excitation). Noting that when evaluating the spectral density matrix at frequency $0$ (cf. Equation~\eqref{eq_psdmat}), we have
\begin{align*}
\boldsymbol{f}_2(0)=\big(\boldsymbol{\mathrm{I}}_D-\boldsymbol\nu\big)^{-1}\text{Diag}(\boldsymbol\lambda)\big[(\boldsymbol{\mathrm{I}}_D-\boldsymbol\nu)^{-1}\big]^{\top}.  
\end{align*}
Clearly, a diagonal $\boldsymbol{\nu}$ leads to $\boldsymbol{f}_2(0)$ being diagonal. Conversely, if $\boldsymbol{f}_2(0)$ is diagonal, Proposition~\ref{uniq_mat_eq_proposition} in the Appendices guarantees that there is only one nonnegative matrix $\boldsymbol{\nu}$ with spectral radius smaller than 1 satisfying the above equation and this $\boldsymbol{\nu}$ has to be diagonal. Therefore, the independence of the subprocesses of $N$ is equivalent to $\boldsymbol{f}_2(0)$ being diagonal. If there is a consistent estimator $\hat{\boldsymbol{f}}_2(0)$ of $\boldsymbol{f}_2(0)$, we could tell whether the subprocesses are independent by assessing the deviation of $\hat{\boldsymbol{f}}_2(0)$ from a diagonal matrix.

If we are to derive an estimator of $\boldsymbol{f}_2(0)$ using the Fourier transform $J_T^{u}(\omega)$, $u=1,\cdots,D$, setting $\omega$ to $0$ is never a good idea as $J_T^{u}(0) = N_{u}((0,T))/T$ reflects the mean intensity of the process rather than the second-order properties. However, considering the Fourier frequencies $\omega_p = 2\pi p/T$, $p=1,\cdots,M_T$ with $M_T\rightarrow\infty$ and $M_T/T\rightarrow0$, we observe that for any $u,v=1,\cdots,D$,
\begin{align*}
\left|\frac{1}{M_T}\sum_{p=1}^{M_T} f_{uv}(\omega_p) - f_{uv}(0) \right|
\leqslant \sup_{\omega\in[0,2\pi M_T/T]}|f_{uv}(\omega) - f_{uv}(0)|\rightarrow 0
\end{align*}
when $T\rightarrow\infty$ due to the fact that the cross spectral density $f_{uv}$, which is the Fourier transform of a finite measure, is uniformly continuous at frequency 0. This observation prompts us to use Fourier transform with Fourier frequencies close to $0$ to construct an estimator of $\boldsymbol{f}_2(0)$. Motivated by the Taylor expansion of the spectral density of time series at frequency 0, \citet{McElroy_and_Politis2025} studied the following linear regression problem $\mathrm{Re}J_{T}^{u}(\omega_p)J_{T}^{v}(-\omega_p) = a_0 + a_1 \omega_p^2 + \epsilon_s$, $p=1,\cdots,M_T$ and claimed that the resulting intercept $\hat{a}_0$ is a consistent estimator for $[\boldsymbol f_2(0)]_{uv}$. We will show that for a stationary multivariate Hawkes process, the estimators produced by this procedure are also consistent. We further show the asymptotic normality pertaining to the estimators of the off-diagonal entries of $\boldsymbol{f}_2(0)$ under the assumption that the subprocesses are independent. As a consequence, a simple test is then derived from the asymptotic normality.

Let $K$ be an non-negative, bounded function defined on $[-1,1]$ satisfying $K(-x) = K(x)$ for any $x\in[-1,1]$ and $\int_0^1K(t)\mathrm{d}t>0$. $K$ is often called kernel function in the literature. For any $\delta>0$, define $K_{\delta}(x) = (2\pi\delta)^{-1}K(x/(2\pi\delta))$. Let $\mathbf{X}$ be a $M_T\times 2$ matrix with the first column $(1,\cdots,1)^{\top}$ and second column $(\omega_1^2,\cdots,\omega_{M_T}^2)^{\top}$, $\mathbf{y}_{uv}=(\mathrm{Re}J_{T}^{u}(\omega_1)J_{T}^{v}(-\omega_1),\cdots,\mathrm{Re}J_{T}^{u}(\omega_{M_T})J_{T}^{v}(-\omega_{M_T}))^{\top}$ and $\mathbf{W} = \text{Diag}(K_{\delta_T}(\omega_1),\cdots,K_{\delta_T}(\omega_{M_T}))$, where $\delta_T = M_T/T$. Then the weighted least squares (WLS) estimate of the intercept of the above linear regression problem is $\hat{\phi}_{uv} = (1,0)(\mathbf{X}^{\top}\mathbf{W}\mathbf{X})^{-1}\mathbf{X}^{\top}\mathbf{W}\mathbf{y}_{uv}$. The ordinary least squares estimate can be obtained by letting $K(x)=1$. We have the following theorem whose proof can be found in Section~\ref{proof_Thm_McP_sect} in the Appendices.
\begin{theorem}\label{Thm_McP}
Let $\hat{\boldsymbol\Phi} = \{\hat{\phi}_{uv}\}_{u,v=1\cdots,D}$. If $M_T\rightarrow\infty$ and $\delta_T\rightarrow0$ when $T\rightarrow\infty$, then 
\begin{enumerate}[(a)]
    \item $\hat{\boldsymbol\Phi}$ converges in probability to $\boldsymbol f_2(0)$.
    \item If the underlying multivariate Hawkes process has diagonal interactions matrix, then the vector $(\sqrt{M_T}\hat{\phi}_{uv})_{1\leqslant u<v\leqslant D} = (\sqrt{M_T}\hat{\phi}_{12},\cdots,\sqrt{M_T}\hat{\phi}_{1D},\sqrt{M_T}\hat{\phi}_{23},\cdots,\sqrt{M_T}\hat{\phi}_{2D},\cdots,\sqrt{M_T}\hat{\phi}_{D-1~D})^{\top}$ converges in distribution to $(z_{uv})_{1\leqslant u<v\leqslant D}$, which is jointly normally distributed with $\mathrm{E}z_{uv}=0$ and 
    \begin{align*}
       \mathrm{cov}(z_{uv}, z_{pq}) 
       =\begin{cases}
         \frac{1}{2}f_{uu}(0)f_{vv}(0)\int_{0}^{1} \left(\frac{(H_4 - H_2 x^2)K(x)}{H_4H_0 - H_2^2}\right)^2\mathrm{d}x, & \text{if }u=p\text{ and }v=q.\\
         0, & \text{otherwise}.
         \end{cases} 
    \end{align*} 
    for any $1\leqslant u<v\leqslant D,~1\leqslant p<q\leqslant D$, where $H_{2l} = \int_{0}^{1}K(x)x^{2l}\mathrm{d}x$, $l=0,1,2$.
\end{enumerate}
\end{theorem}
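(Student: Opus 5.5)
The plan is to write the WLS intercept as a linear statistic in the periodogram ordinates. Solving the $2\times2$ normal equations gives
\[
\hat\phi_{uv}=\sum_{p=1}^{M_T} w_{p,T}\,\mathrm{Re}\,J_T^u(\omega_p)J_T^v(-\omega_p),
\]
with weights
\[
w_{p,T}=\frac{K_{\delta_T}(\omega_p)\bigl(S_4-S_2\omega_p^2\bigr)}{S_0S_4-S_2^2},\qquad S_{2l}=\sum_{p=1}^{M_T}K_{\delta_T}(\omega_p)\omega_p^{2l}.
\]
Since $\omega_p/(2\pi\delta_T)=p/M_T$, these sums are Riemann sums. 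Concretely, $S_{2l}\approx M_T(2\pi\delta_T)^{2l-1}H_{2l}$, so
\[
M_T w_{p,T}\to \frac{(H_4-H_2x^2)K(x)}{H_4H_0-H_2^2}=:\kappa(x),\qquad x=p/M_T,
\]
uniformly in $p$. We also have $\sum_p w_{p,T}=1$, $\sum_p w_{p,T}\omega_p^2=0$, and $\max_p|w_{p,T}|=O(1/M_T)$. The denominator is positive by Cauchy--Schwarz, because $\int_0^1K>0$.

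For part (a), I will compute the mean and the variance separately, using the cumulant formula \eqref{cum_def1} together with Proposition~\ref{HPCum_bound_prop}. For the mean: since the reduced second cumulant measure is finite, the standard Fejér-kernel computation gives $\mathrm{E}J_T^u(\omega_p)J_T^v(-\omega_p)=f_{uv}(\omega_p)+o(1)$ uniformly over $p\le M_T$. The first-moment term vanishes at Fourier frequencies $\omega_p\neq0$. The error is controlled by $\int\min(1,|x|/T)\,|dC^{\mathrm{red}}_{uv}|(x)\to0$, which holds by dominated convergence and needs no moment assumption. Because $\sum_p|w_{p,T}|=O(1)$ and $f_{uv}$ is uniformly continuous with $\omega_p\le2\pi\delta_T\to0$, it follows that $\mathrm{E}\hat\phi_{uv}\to f_{uv}(0)$. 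For the variance, I expand $\mathrm{cov}(J^uJ^{v*}(\omega_p),J^uJ^{v*}(\omega_q))$ into a product-of-second-cumulants term and a fourth-cumulant term. Using the finiteness of the reduced cumulants, the fourth-cumulant term is $O(1/T)$. The product term is $O(1)$ when $p=q$. When $p\ne q$ it is $o(1)$ uniformly, and more precisely it is summable: for a finite measure $\sum_{q}|\mathrm{cum}(J_T(\omega_p),J_T(-\omega_q))|^2=O(1)$ by a Bessel/Parseval-type bound. Hence $\mathrm{Var}\,\hat\phi_{uv}=O(\max_p|w_p|)+O(M_T^{-1}\cdot\text{bounded})\to0$, which gives consistency by Chebyshev's inequality.

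For part (b), assume $\boldsymbol\nu$ is diagonal. Then $N_u$ and $N_v$ are independent for $u\ne v$, so $f_{uv}\equiv0$ and $\mathrm{E}\hat\phi_{uv}=0$ exactly (the mean of $J^u$ vanishes at nonzero Fourier frequencies). I will prove joint asymptotic normality of $\sqrt{M_T}(\hat\phi_{uv})_{u<v}$ by the method of cumulants, i.e.\ Cramér--Wold plus showing that all cumulants of order $\ge3$ vanish. For the covariance, independence of the distinct components factorizes
\[
\mathrm{E}\bigl[J^uJ^{v*}(\omega_p)\,\overline{J^pJ^{q*}(\omega_{p'})}\bigr]
\]
into second-order moments of single components. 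This is nonzero asymptotically only if $\{u,v\}=\{p,q\}$, and it is approximately $f_{uu}(\omega_p)f_{vv}(\omega_p)\mathbb 1\{p=p'\}$ up to summable off-diagonal terms. Taking real parts supplies the factor $1/2$, since $\mathrm{Var}\,\mathrm{Re}Z=\tfrac12\mathrm{E}|Z|^2$ when $\mathrm{E}Z^2\approx0$, and $\mathrm{E}Z^2$ involves $f(\omega_p)f(-\omega_p)$ paired at $\omega_p$ versus $-\omega_p$, which are not matched. Therefore
\[
M_T\,\mathrm{Var}\,\hat\phi_{uv}\approx M_T\sum_p w_{p,T}^2\,\tfrac12 f_{uu}(0)f_{vv}(0)\to\tfrac12f_{uu}(0)f_{vv}(0)\int_0^1\kappa^2 .
\]
For the higher cumulants, I will expand each order-$r$ cumulant of $\sqrt{M_T}\hat\phi$ into sums over indecomposable partitions of products of cumulants of finite Fourier transforms. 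These can be bounded using the Proposition~\ref{HPCum_bound_prop} bounds (with $|\mathrm{cum}_k(J_T)|\le C T^{1-k/2}$) and the Fejér-type orthogonality of distinct Fourier frequencies. The aim is a bound of order $M_T^{r/2}\cdot M_T^{-r}\cdot M_T=M_T^{1-r/2}\to0$ for $r\ge3$.

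The main obstacle is this higher-cumulant bookkeeping. One has to verify that each indecomposable partition contributes at most one free summation over frequencies. Frequency sums of cross-frequency second cumulants are handled by the $\ell^2$ (Parseval) bound rather than pointwise bounds, since no moments of $g_{ij}$ are assumed, so the off-diagonal decay is not quantitative. As a fallback, I would approximate $J_T(\omega_p)$ through Brillinger's Theorem 4.2-style asymptotic independence, combined with a Lindeberg argument for weighted sums with $\max_p|w_p|=O(1/M_T)$.
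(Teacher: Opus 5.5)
Your overall plan matches the paper's proof: the weights $w_{p,T}=g_T(\omega_p)/M_T$, the mean from Corollary~\ref{cov_J_corollary}, a variance bound for (a), and the method of cumulants with Brillinger's Lemma P4.5 for (b). Part (a) goes through as you describe, since Cauchy--Schwarz over $q$ plus your Parseval bound gives $\mathrm{var}(\hat\phi_{uv})=O(1/M_T)+O(1/T)$.

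\textbf{First gap: the variance limit in (b).} At the $\sqrt{M_T}$ scale, row-summable off-diagonal terms are not negligible. With $|w_{p,T}|\leqslant C/M_T$ and $\sum_{p'}|c_{pp'}|=O(1)$ you only get $M_T\sum_{p\neq p'}|w_{p,T}w_{p',T}c_{pp'}|=O(1)$, which is the same order as the diagonal term you keep. The same problem affects $M_T\mathrm{E}Z^2\to0$, which is what produces the factor $1/2$; there every pair is off-diagonal because $\omega_p+\omega_{p'}\neq0$. You need the off-diagonal mass to be $o(1)$. Three routes give it:
\begin{enumerate}[(i)]
\item Combine the moment-free pointwise bound \eqref{V2_bound}, $|V_T^{jk}(s_1,s_2)|\leqslant C^{\mathrm{red}}_{jk}(\mathbb R)/(\pi|s_1+s_2|)$, with $V_T=o(1)$ uniformly. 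Then the row sums are bounded by $\sum_{k\neq0}\min(\epsilon_T,C/|k|)^2\to0$.
\item Use dominated convergence as in Proposition~\ref{cov_sep_prop}(b), which is what the paper does.
\item Stay in your framework and upgrade Parseval to an identity. The cumulants $\mathrm{cum}(J_T^u(\omega_p),J_T^u(-\omega_q))$, $q\in\mathbb Z$, are the normalized Fourier coefficients on $[0,T]$ of $F_p(y)=\int\mathds{1}_{\{0<y+x<T\}}e^{-\mathrm{i}\omega_p(y+x)}\,\mathrm{d}C^{\mathrm{red}}_{uu}(x)$. Their squared moduli therefore sum to $T^{-1}\int_0^T|F_p|^2=|f_{uu}(\omega_p)|^2+o(1)$ uniformly in $p$, and subtracting the diagonal term leaves $o(1)$.
\end{enumerate}

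\textbf{Second, more serious gap: the higher-order cumulants.} This step is the heart of (b), and you leave it as bookkeeping. Your premise that, without moments on the $g_{ij}$, off-diagonal decay is not quantitative is mistaken, and it steers you away from the tool that closes the count. Corollary~\ref{cum_J_corollary}(b) uses only $|e^{\mathrm{i}a}-e^{\mathrm{i}b}|\leqslant2$ and finiteness of $C^{\mathrm{red}}$, and gives, for blocks $Q$ of every size,
\[
|\mathrm{cum}(J_T^{k_j}(\omega_{i_j});j=1,\dots,|Q|)|\leqslant T^{1-|Q|/2}C^{\mathrm{red}}_{k_1\cdots k_{|Q|}}(\mathbb R^{|Q|-1})\Bigl(\mathds{1}_{\{\sum_{i\in Q}i=0\}}+|{\textstyle\sum_{i\in Q}}i|^{-1}\mathds{1}_{\{\sum_{i\in Q}i\neq0\}}\Bigr).
\]
Combine this with the rank-$(r-1)$ counting for indecomposable partitions in Proposition~\ref{indecomp_sum_corollary}. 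The order-$l$ cumulant of $\sqrt{M_T}\hat\phi$ is then $O\bigl(M_T^{1-l/2}(M_T/T)^{l-r}(\log M_T)^{r-1}\bigr)$, which tends to $0$ for $l\geqslant3$.

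Your proposed substitutes do not clearly close:
\begin{enumerate}[(i)]
\item With only $|\mathrm{cum}_k(J_T)|\leqslant CT^{1-k/2}$ and a uniform $o(1)$ for unmatched blocks, the $r=l$ partitions whose blocks all have nonzero frequency sum contribute up to $(M_T^{1/2}\epsilon_T)^l$. The rate $\epsilon_T=o(M_T^{-1/2})$ is not available without moments.
\item A pairwise Parseval bound has no evident analogue for blocks of size $\geqslant3$.
\item Brillinger's Theorem 4.2 concerns a fixed finite set of frequencies, so it cannot drive a Lindeberg argument over $M_T\to\infty$ ordinates.
\end{enumerate}
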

Consequently, under the assumption that the subprocesses of the underlying multivariate stationary Hawkes process are jointly independent, 
\begin{align}\label{eq_ind_test_statistic}
M_T\frac{2}{\int_{0}^{1} \left(\frac{(H_4 - H_2 x^2)K(x)}{H_4H_0 - H_2^2}\right)^2\mathrm{d}x}\sum_{1\leqslant u<v\leqslant D}\frac{\hat{\phi}_{uv}^2}{f_{uu}(0)f_{vv}(0)}    
\end{align}
asymptotically follows a chi-square distribution with degrees of freedom $D(D-1)/2$. This allows us to make a judgment on whether the subprocesses are independent based on the p-value calculated using the data. In practice, since $f_{uu}(0)$, $u=1,\cdots,D$ are not known in general, we may substitute $f_{uu}(0)$ with $\hat{\phi}_{uu}$ in Equation~\eqref{eq_ind_test_statistic} when calculating the p-value.

\section{Numerical results}\label{section_numerical_experiment}
In this section, we will demonstrate the proposed methods via a variety of numerical experiments. The behavior of an estimator $\hat{\boldsymbol{\theta}} = (\hat{\theta}_1,\cdots,\hat{\theta}_d)^{\top}$ will be assessed in terms of its relative error defined as
\begin{align}\label{def_relative_error}
\mathrm{Relative~error}=\sum_{j=1}^d \frac{|\hat{\theta}_j - \theta_{0,j}|}{|\theta_{0,j}|},    
\end{align}
where $\boldsymbol{\theta}_0= (\theta_{0,1},\cdots,\theta_{0,d})^{\top}$ is the true value of the parameter. The figures and tables summarizing the numerical results can be found in Appendix~\ref{sec:simstudy_supp}. 

\subsection{Univariate Fractional Hawkes process}\label{example_univariate_FHP}
Let $N$ be a univariate Fractional Hawkes process considered in Example~\ref{example_FHP}. It can be shown that a necessary and sufficient condition for $N$ to be stationary is $0\leqslant\nu_{11}<1$. The spectral density function of $N$ is
\begin{align*}
f_{11}(\omega) = \frac{\mu_1}{1 - \nu_{11}}\frac{1}{|1 - \nu_{11}\hat{f}_{\mathrm{ML}}(\omega;\beta_{11},c_{11})|^2} .
\end{align*}
Suppose $\{t_1,\cdots,t_{N(T)}\}$ is a realization of $N$ on $(0,T)$. We will compute Whittle estimator (WE) and the exact maximum likelihood estimator (MLE) based on the observations and compare their performance. The MLE is the minimizer of the negative log-likelihood (given in equation (7.2.4) of \citet{Daley_and_Vere-Jones2003}):
\begin{align}\label{true_likelihood}
l(\theta) = -\sum_{k=1}^{N(T)} \log(\lambda_{1}(t_{k}|\mathcal{H}_{t_{k}-})) + \int_{0}^{T} \lambda_{1}(t|\mathcal{H}_{t-})\mathrm{d}t , 
\end{align}
where the conditional intensity $\lambda_{1}(t|\mathcal{H}_{t-})$ is given in Definition~\ref{HP_def} and $\boldsymbol{\theta} = (\mu_1,\nu_{11},\beta_{11},c_{11})$. For efficient computation, the R code for computing the Mittag-Leffler function is adapted from function \texttt{mlf} in \texttt{MittagLeffleR} R package (\citet{MittagLeffleR}) which is based on the Laplace-Inversion algorithm by \citet{Garrappa2015}. In order to compare the performance of WE with that of MLE, we consider 4 univariate Fractional Hawkes processes FH$1.$--FH$4.$ as data generating processes (DGP): Let $(b_1,b_2,b_3,b_4)= (0.4, 0.5, 0.6, 0.9)$. For $i=1,2,3,4$, the parameters of Hawkes process FH$i.$ are 
\begin{equation*}
\mu_1 = 1, \nu_{11} = 0.5, \beta_{11}=b_i, c_{11}=1. 
\end{equation*}  
Clearly, the four Hawkes processes FH${1.}$--FH${4.}$ share the same background rate $\mu_1$, branching ratio $\nu_{11}$ and the scale parameter of the Mittag-Leffler kernel $c_{11}$ but differ in the tail of the ML kernel, which is controlled by $\beta_{11}$. It can be seen that Theorem~\ref{thm_asymptotic_normality_Whittle}, which states the asymptotic normality of the WE, does not hold for FH1 and FH2 because $\int_0^{\infty}x^{\alpha}f_{\mathrm{ML}}(x;\beta_{11},c_{11})\mathrm{d}x=\infty$ for any $\alpha>0.5$ implied by $\beta_{11}\leqslant0.5$. From FH1 to FH4, the tail of the kernel becomes lighter. For each of the univariate Fractional Hawkes process, we generate 1000 independent copies on $(0,T)$ with $T=1250,2500$. To study the effect of the number of Fourier frequencies, we choose $M_T = \lfloor 2T\rfloor$ and $\lfloor T\log T\rfloor$. Table~\ref{table_comparison_uniFH} of Appendix~\ref{sec:simstudy_supp} contains the median (IQR) of the relative errors for all 1000 independent copies of each DGP. Table~\ref{table_runtime_uniFH} of Appendix~\ref{sec:simstudy_supp} contains the corresponding runtime for obtaining the estimates. Figure~\ref{fig_comparison_uniFH} of Appendix~\ref{sec:simstudy_supp} is a graphical complement to Table~\ref{table_comparison_uniFH}.

Figure~\ref{fig_comparison_uniFH} and Tables~\ref{table_comparison_uniFH}--\ref{table_runtime_uniFH} show a clear accuracy--speed tradeoff. The MLE is uniformly more accurate, but it is substantially slower to compute. This is unsurprising: under correct specification the MLE is asymptotically efficient, whereas its evaluation here requires repeated computation of \eqref{true_likelihood}, which is expensive because of the $O(n^2)$ cost in the event count $n$ and the numerical burden of evaluating the Mittag-Leffler function. As the self-excitation kernel becomes lighter-tailed, the gap in accuracy between MLE and WE narrows. Regarding the choice of $M_T$, both choices appear consistent, but $M_T=\lfloor T\log T\rfloor$ is usually slightly more accurate than $M_T=\lfloor 2T\rfloor$; that difference becomes small in the lighter-tailed cases. This is consistent with the findings of \citet[Section~5.1.2]{Bonnet_et_al2025}, who study the exponentially decaying case.

\subsection{Bivariate Fractional Hawkes process}
\label{example_bivariate_FHP}
Let $N$ be a bivariate Fractional Hawkes process considered in Example~\ref{example_FHP}. It can be shown that a necessary and sufficient condition for $N$ to be stationary is $0\leqslant\nu_{11},\nu_{22}<1$, $\nu_{12}\nu_{21}<(1 - \nu_{11})(1 - \nu_{22})$. The spectral density matrix is
\begin{align*}
\boldsymbol{f}_2(\omega) = 
\begin{pmatrix}
f_{11}(\omega) & f_{12}(\omega) \\
f_{21}(\omega) & f_{22}(\omega)
\end{pmatrix} ,
\end{align*}
where
\begin{align*}
\boldsymbol{\lambda} = 
\begin{pmatrix}
\lambda_1 \\
\lambda_2
\end{pmatrix}
=(\boldsymbol{\mathrm{Id}}_2 - \boldsymbol{\nu})^{-1}\boldsymbol{\mu}
=\frac{1}{(1 - \nu_{11})(1 - \nu_{22}) -\nu_{12}\nu_{21}}\begin{pmatrix}
\mu_1(1 - \nu_{22}) + \mu_2\nu_{12} \\
\mu_2(1 - \nu_{11}) + \mu_1\nu_{21}
\end{pmatrix} ,
\end{align*}
\begin{align*}
f_{11}(\omega) =  \frac{\lambda_1|1 - \nu_{22}\hat{f}_{\mathrm{ML}}(\omega;\beta_{22},c_{22})|^2 + \lambda_2|\nu_{12}\hat{f}_{\mathrm{ML}}(\omega;\beta_{12},c_{12})|^2}{|(1 - \nu_{11}\hat{f}_{\mathrm{ML}}(\omega;\beta_{11},c_{11}))(1 - \nu_{22}\hat{f}_{\mathrm{ML}}(\omega;\beta_{22},c_{22})) -\nu_{12}\hat{f}_{\mathrm{ML}}(\omega;\beta_{12},c_{12})\nu_{21}\hat{f}_{\mathrm{ML}}(\omega;\beta_{21},c_{21})|^2}   ,
\end{align*}
\begin{align*}
f_{12}(\omega) =  \frac{\lambda_1(1 - \nu_{22}\hat{f}_{\mathrm{ML}}(-\omega;\beta_{22},c_{22}))\nu_{21}\hat{f}_{\mathrm{ML}}(\omega;\beta_{21},c_{21}) + \lambda_2\nu_{12}\hat{f}_{\mathrm{ML}}(-\omega;\beta_{12},c_{12})(1 - \nu_{11}\hat{f}_{\mathrm{ML}}(-\omega;\beta_{11},c_{11}))}{|(1 - \nu_{11}\hat{f}_{\mathrm{ML}}(\omega;\beta_{11},c_{11}))(1 - \nu_{22}\hat{f}_{\mathrm{ML}}(\omega;\beta_{22},c_{22})) -\nu_{12}\hat{f}_{\mathrm{ML}}(\omega;\beta_{12},c_{12})\nu_{21}\hat{f}_{\mathrm{ML}}(\omega;\beta_{21},c_{21})|^2}  ,
\end{align*}
$f_{21}(\omega) = f_{12}(-\omega)$ and $f_{22}$ is obtained by swapping indices $1$ and $2$ in the expression of $f_{11}$. 

To demonstrate the consistency of the Whittle estimator, we consider the following data generating process:
\begin{enumerate}
 \item[FH5.] $N=(N_1,N_2)$ is a bivariate Fractional Hawkes process with $\boldsymbol{\mu} = (0.2, 0.1)^{\top}$ (or equivalently, $\boldsymbol{\lambda} = (13/3, 17/6)^{\top}$),
\begin{align*}
\boldsymbol{\nu} = 
\begin{pmatrix}
0.3 & 1 \\
0.5 & 0.2
\end{pmatrix} ,~
\begin{pmatrix}
c_{11} & c_{12} \\
c_{21} & c_{22}
\end{pmatrix} = 
\begin{pmatrix}
0.8 & 1.0 \\
0.9 & 1.1
\end{pmatrix} ,~
\begin{pmatrix}
\beta_{11} & \beta_{12} \\
\beta_{21} & \beta_{22}
\end{pmatrix} = 
\begin{pmatrix}
0.75 & 0.85 \\
0.8 & 0.9
\end{pmatrix} .
\end{align*}
\end{enumerate}
The parameters to be estimated are $\boldsymbol{\theta} = (\lambda_1^{-1},\lambda_2^{-1},\nu_{11},\nu_{21},\nu_{12},\nu_{22},\beta_{11},\beta_{21},\beta_{12},\beta_{22},c_{11},c_{21},c_{12},c_{22})$.
We generate $1000$ independent copies of $N$ on $(0,T)$ with $T=1250,~2500$. Whittle estimator (WE) is then calculated for each of the copy. We choose $M_T = \lfloor 2T\rfloor$ and $\lfloor T\log T\rfloor$ for this experiment. The median and IQR (in brackets) of the relative errors are presented in Table~\ref{table_comparison_biFH} of Appendix~\ref{sec:simstudy_supp}. A summary of the runtime is recorded in Table~\ref{table_runtime_biFH} of Appendix~\ref{sec:simstudy_supp}. It seems that both choices of $M_T$ make the estimator consistent and the performance of WE with $M_T = \lfloor T\log T\rfloor$ is slightly better than that with $M_T = \lfloor 2T\rfloor$.

\subsection{Detecting independence among subprocesses}\label{example_estimation_nu}
In this section, we will assess the performance of the simple test statistic \eqref{eq_ind_test_statistic}, we consider the following multivariate stationary Fractional Hawkes processes:
\begin{enumerate}
 \item[FH6.] $N=(N_1,N_2)^{\top}$ is a class of bivariate stationary fractional Hawkes processes with $\boldsymbol{\mu} = (0.5, 0.5)^{\top}$,
 \begin{align*}
  \boldsymbol{\nu} = 
\begin{pmatrix}
1/2 & a \\
b & 1/2
\end{pmatrix} ,~
\begin{pmatrix}
c_{11} & c_{12} \\
c_{21} & c_{22}
\end{pmatrix} = 
\begin{pmatrix}
1 & 1.1 \\
0.9 & 1
\end{pmatrix} ,~
\begin{pmatrix}
\beta_{11} & \beta_{12} \\
\beta_{21} & \beta_{22}
\end{pmatrix} = 
\begin{pmatrix}
1 & 0.9 \\
0.8 & 1
\end{pmatrix} ,  
 \end{align*}
where $a,b\geqslant0$, $ab<1/4$. 
\end{enumerate}
We can show that $\boldsymbol{\lambda} = (1/4 - ab)^{-1}(1/4 + a/2, 1/4 + b/2)^{\top}$ and therefore,
\begin{align}\label{eq_f12(0)}
f_{12}(0) = f_{21}(0) = \frac{ab/2 + (a+b)/8}{(1/4 -ab)^3}.
\end{align}
It is easy to see that $f_{12}(0)$ is nonnegative, and  equals $0$ if any only if $a=b=0$. For any fixed $a$ (respectively $b$), $f_{12}(0)$ is a strictly monotonically increasing function of $b$ (respectively $a$). Moreover, the larger $a+b$ is, the more dependent $N_1$ and $N_2$ will be.

We select a set of grid points $a,b=0,0.1,0.2,0.3$, resulting in 16 bivariate stationary Hawkes processes. For each process, we generate 1000 independent copies of $N$ on $(0,T)$, where $T=5000$. The number of Fourier frequencies is chosen to be $M_T=\lfloor 10\sqrt{T}\rfloor$. 
For each realization, we calculate the p-value associated with \eqref{eq_ind_test_statistic} and reject the null hypothesis of independence when the p-value is less than $0.05$. For simplicity, we adopt the ordinary least squares estimates when computing the test statistic (i.e. the kernel function is $K(x)=1$). In Table~\ref{table_FH6} of Appendix~\ref{sec:simstudy_supp}, we report the percentage of realizations for which independence is rejected. Judging from Table~\ref{table_FH6}, when $N_1$ and $N_2$ are indeed independent (i.e.\ $(a,b)=(0,0)$), only 22 out of 1000 realizations lead to rejection, which is reasonably close to the nominal $5\%$ level. When $\{a,b\}=\{0,0.1\}$, the subprocesses are dependent but only weakly so in view of \eqref{eq_f12(0)}, and the test nevertheless rejects independence in over $90\%$ of realizations. For the remaining cases, the dependence signal reflected by \eqref{eq_f12(0)} is strong enough that dependence is detected in every realization.

\section{Discussion and outlook}\label{section_conclusion}
In this paper, we established consistency and asymptotic normality of the Whittle estimator for multivariate stationary Hawkes processes. Our results accommodate heavy tailed kernels, a regime that while important to many applications (e.g. seismology), is often excluded from existing asymptotic theory. We use the reduced cumulant measures to yield frequency-domain asymptotics without relying on strong mixing and moment assumptions as in \citet{Cheysson2022_and_Lang2022}. Finally, we showed that the same cumulant-based approach also leads to a simple and effective frequency-domain test for independence among subprocesses.

Recent years have seen proliferation of nonparametric inference of mutual-excitation kernels of Hawkes processes (\citealp{Bonnet_and_Sangnier2025,Kim_and_Iwata2026}) as well as the spectral density of time series (\citealp{Tang_et_al2026}). Despite the fact that this paper concerns parametric inference of Hawkes processes using Whittle likelihood, we believe the Whittle likelihood can also facilitate nonparametric modeling of Hawkes processes via Bartlett spectral density in terms of both theoretical soundness and computational cost.

Cross-validation is an important way to assess statistical model's ability of prediction and is known to be difficult when there is only one realization of a point process. Recently, \citet{Herrera_and_Cheysson2026} proposed a frequency-domain penalized least-square estimation procedure for Hawkes processes and conduct cross-validation with the help of a thinning mechanism inspired by \citet{Cronie_et_al2024}. We anticipate that the proof techniques used in this paper can be of great help in discovering the asymptotic properties of the said cross-validation procedure. Furthermore, we believe that the Whittle estimation proposed in this paper will certainly benefit from the cross-validation procedure when applying to real-world data.

\section{Acknowledgments} This work was supported by the Royal Society of New Zealand Marsden Fund under grants MFP-UOO2323 and MFP-UOO2518. The authors wish to acknowledge use of the eResearch Infrastructure Platform hosted by the Crown company, Research and Education Advanced Network New Zealand (REANNZ) Ltd., and funded by the Ministry of Business, Innovation \& Employment. URL: https://www.reannz.co.nz



\bibliographystyle{plainnat}
\bibliography{ref}
\newpage

\appendix
\appendixpage
\section{Tables and figures}\label{sec:simstudy_supp}
\FloatBarrier
\begin{figure}[!h]
\includegraphics[width=1\linewidth]{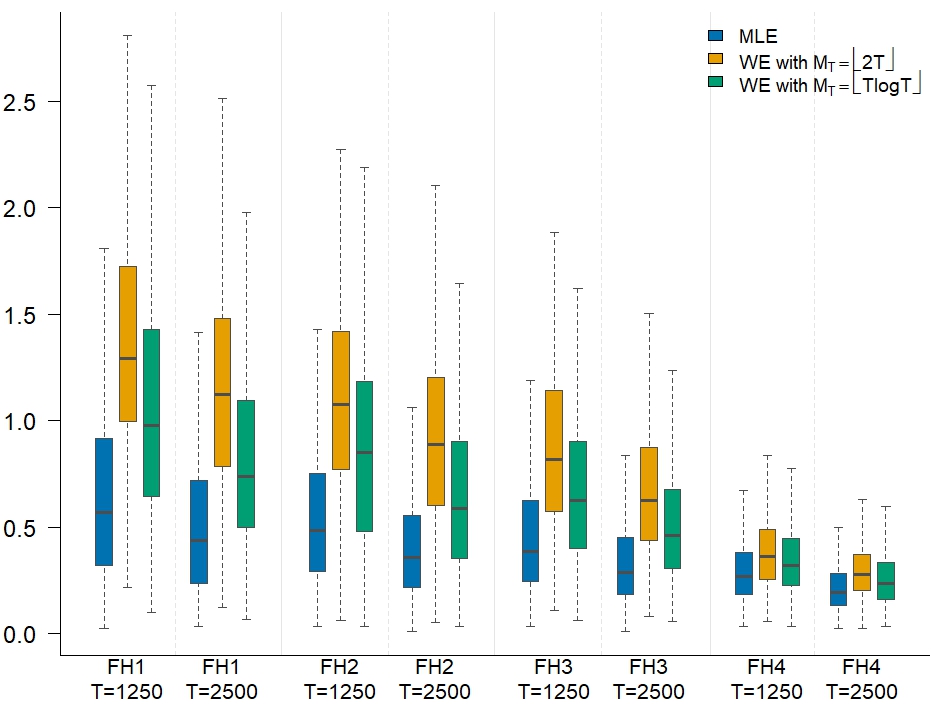}
\caption{The boxplot of relative errors (defined in \eqref{def_relative_error}) of the estimates in Section~\ref{example_univariate_FHP}. For the sake of visualization, the y-axis represents $\log(1 + \mathrm{Relative~error)}$. MLE is indicated by blue, WE with $M_T = \lfloor 2T\rfloor$ is in orange, and WE with $M_T = \lfloor T\log T\rfloor$ is in green.}
\label{fig_comparison_uniFH}
\end{figure}

\begin{table}[!h]
\caption{Median and IQR (in brackets) of relative errors (defined in \eqref{def_relative_error}) of the Whittle and maximum likelihood estimates  in Section~\ref{example_univariate_FHP}.}
\label{table_comparison_uniFH}
\begin{center}
\begin{tabular}{|c c c c|} 
 \hline
 DGP & MLE & WE with $M_T = \lfloor 2T\rfloor$ & WE with $M_T = \lfloor T\log T\rfloor$  \\ [0.5ex] 
 \hline\hline
 FH1. $T=1250$ & 0.7722(1.1229) & 2.6463(2.8904) & 1.6567(2.2685) \\ 
 \hline
 FH1. $T=2500$ & 0.5542(0.7814) & 2.0731(2.1956) & 1.0941(1.3383)\\
 \hline
 \hline
 FH2. $T=1250$ & 0.6287(0.7856) & 1.9359(1.9593) & 1.3410(1.6462)\\
 \hline
 FH2. $T=2500$ & 0.4334(0.4999) & 1.4316(1.5055) & 0.8018(1.0401)\\ 
 \hline
 \hline
 FH3. $T=1250$ & 0.4761(0.5885) & 1.2638(1.3663) & 0.8732(0.9758)\\
 \hline
 FH3. $T=2500$ & 0.3364(0.3676) & 0.8732(0.8504) & 0.5884(0.6160)\\
 \hline
 \hline
 FH4. $T=1250$ &  0.3120(0.2614) & 0.4386(0.3395) & 0.3787(0.3088) \\ 
 \hline
 FH4. $T=2500$ &  0.2175(0.1827) & 0.3227(0.2292) & 0.2671(0.2244)\\
 \hline
\end{tabular}
\end{center}
\end{table}

\begin{table}[!h]
\caption{Median and IQR (in brackets) of runtime (in minutes) of calculating the Whittle and maximum likelihood estimates in Section~\ref{example_univariate_FHP}. For comparison, we use a fixed initial value for optimization procedure across all realizations.}
\label{table_runtime_uniFH}
\begin{center}
\begin{tabular}{|c c c c|} 
 \hline
 DGP & MLE & WE with $M_T = \lfloor 2T\rfloor$ & WE with $M_T = \lfloor T\log T\rfloor$  \\ [0.5ex] 
 \hline\hline
 FH1. $T=1250$ & 3.0165(1.4670) & 0.0117(0.0021) & 0.0383(0.0063) \\ 
 \hline
 FH1. $T=2500$ & 10.9002(5.8283) & 0.0405(0.0039) & 0.1616(0.0141)\\
 \hline
 \hline
 FH2. $T=1250$ & 2.4757(1.2086) & 0.0119(0.0019) & 0.0410(0.0064)\\
 \hline
 FH2. $T=2500$ & 7.9377(3.0711) & 0.0422(0.0036) & 0.1592(0.0120)\\ 
 \hline
 \hline
 FH3. $T=1250$ & 3.4843(1.9421) & 0.0111(0.0017) & 0.0393(0.0045)\\
 \hline
 FH3. $T=2500$ & 9.4609(3.6281) & 0.0399(0.0026) & 0.1524(0.0089)\\
 \hline
 \hline
 FH4. $T=1250$ & 3.6125(1.7914) & 0.0116(0.0015) & 0.0420(0.0041) \\ 
 \hline
 FH4. $T=2500$ & 15.7390(5.2685) & 0.0447(0.0030) & 0.1630(0.0081)\\
 \hline
\end{tabular}
\end{center}
\end{table}

\begin{table}[!h]
\caption{Median and IQR (in brackets) of relative errors (defined in \eqref{def_relative_error}) of the Whittle estimates in Section~\ref{example_bivariate_FHP}.}
\label{table_comparison_biFH}
\begin{center}
\begin{tabular}{|c c c|} 
 \hline
 DGP & WE with $M_T = \lfloor 2T\rfloor$ & WE with $M_T = \lfloor T\log T\rfloor$  \\ [0.5ex] 
 \hline\hline
 FH5. $T=1250$ & 2.0966(0.9756) & 1.7695(0.8302) \\ 
 \hline
 FH5. $T=2500$ & 1.5519(0.7381) & 1.2824(0.6272)\\
 \hline
\end{tabular}
\end{center}
\end{table}
\FloatBarrier
\begin{table}[!h]
\caption{Median and IQR (in brackets) of runtime (in minutes) of calculating the Whittle and maximum likelihood estimates in Section~\ref{example_bivariate_FHP}. For comparison, we use a fixed initial value for the optimization procedure across all realizations.}
\label{table_runtime_biFH}
\begin{center}
\begin{tabular}{|c c c|} 
 \hline
 DGP & WE with $M_T = \lfloor 2T\rfloor$ & WE with $M_T = \lfloor T\log T\rfloor$  \\ [0.5ex] 
 \hline\hline
 FH5. $T=1250$ & 0.0121(0.0012) & 0.0423(0.0056) \\ 
 \hline
 FH5. $T=2500$ & 0.0238(0.0021) & 0.0776(0.0319)\\
 \hline
\end{tabular}
\end{center}
\end{table}

\begin{table}[!h]
\caption{Percentage of realizations in Section~\ref{example_estimation_nu} for which the null hypothesis of independence is rejected at level $0.05$.}
\label{table_FH6}
\begin{center}
\begin{tabular}{|c|c|c|c|c|}
\hline
\diagbox{$a$}{$b$} & 0 & 0.1 & 0.2 & 0.3\\
\hline
 0 & 2.2\% & 92.9\% & 100\% & 100\% \\
 \hline
 0.1 & 96\% & 100\% & 100\% & 100\% \\
 \hline
 0.2 & 100\% & 100\% & 100\% & 100\% \\
 \hline
 0.3 & 100\% & 100\% & 100\% & 100\% \\
 \hline
\end{tabular}
\end{center}
\end{table}

\section{Results concerning the reduced cumulant measures and associated quantities}

\subsection{Proof of Proposition~\ref{HPCum_bound_prop}}\label{cum_density_HP_sect}
For any $s\geqslant2$ and any $j_1,\cdots,j_s\in\{1,\cdots,D\}$, $C_{j_1\cdots j_s}$, and consequently $C_{j_1\cdots j_s}^{\mathrm{red}}$ exist because $\mathrm{E}\exp(a\sum_{j=1}^D N_j(A))<\infty$ for some $a>0$ and any bounded $A\subset\mathbb{R}$ (see Theorem 1 of \citet{Leblanc2024}). The non-negativity of $C_{j_1\cdots j_s}$ and $C_{j_1\cdots j_s}^{\mathrm{red}}$ can be seen from equation (24) of \citet{Jovanovic_et_al2015}. In order to show the finiteness of $C_{j_1\cdots j_s}^{\mathrm{red}}$, we need some notation. Let $k^{j_1\cdots j_s}(x_1,\cdots,x_s)$ be defined in equation (21) of \citet{Jovanovic_et_al2015}. Then we have $\mathrm{d}C_{j_1\cdots j_s}(x_1,\cdots,x_s) = k^{j_1\cdots j_s}(x_1,\cdots,x_s)\mathrm{d}x_1\cdots\mathrm{d}x_s$. Therefore, we only need to show that
\begin{align*}
C_{j_1\cdots j_s}^{\mathrm{red}}(\mathbb{R}^{s-1}) = \int_{\mathbb{R}^{s-1}}k^{j_1\cdots j_s}(x_1,\cdots,x_{s-1},x_s)\mathrm{d}x_1\cdots\mathrm{d}x_{s-1} 
\end{align*}
is finite. Note that the right-hand side of the equation is invariant with respect to $x_s$ due to the stationarity of $N$. The finiteness of the right-hand side can be seen from equation (50) of \citet{Jovanovic_et_al2015}.

To derive a stronger upperbound, we need to delve into the crucial tree representation of the cumulant measures derived in \citet{Jovanovic_et_al2015}. Recall that $\boldsymbol{\nu} = (\nu_{ij})_{i,j=1,\cdots,D}$ happens to be the integral of the excitation matrix over $\mathbb{R}$ (i.e. the matrix $\mathbf{G}$ in equation (13) of \citet{Jovanovic_et_al2015}). Let $\boldsymbol{\Psi} = \sum_{k=1}^{\infty}\boldsymbol{\nu}^k$, $\mathbf{R} = \boldsymbol{\Psi} + \boldsymbol{\mathrm{I}}_{D}$, $R = \max_{1\leqslant i,j\leqslant D} [\mathbf{R}]_{ij}$, $\psi = \max_{1\leqslant i,j\leqslant D} [\boldsymbol{\Psi}]_{ij}$ and $\lambda = \max_{1\leqslant i\leqslant D}[\mathbf{R}\boldsymbol{\mu}]_{i}$, where $\boldsymbol{\mu}$ is the vector of background rates and $\boldsymbol{\mathrm{I}}_{D}$ is the $D$-dimensional identity matrix. The above Neumann series is convergent due to the fact that the spectral radius of $\boldsymbol{\nu}$ is smaller than 1. It is worth noting that $R > \max\{1, \psi\}$. In view of the algorithm at the end of section 3 of \citet{Jovanovic_et_al2015}, we have
$C_{j_1\cdots j_s}^{\mathrm{red}}(\mathbb{R}^{s-1}) = \sum_{i} \int_{\mathbb{R}^{s-1}}k_{i}^{j_1\cdots j_s}(x_1,\cdots,x_s)\mathrm{d}x_1\cdots\mathrm{d}x_{s-1}$, where $k_{i}^{j_1\cdots j_s}$ is generated by steps (2) and (3) of the above-mentioned algorithm and the summation is over all rooted multifurcation trees with $s$ labelled leaves. According to Chapter 3 of \citet{Felsenstein2004}, the number of internal nodes $m_i$ of tree $i$ (with which $k_{i}^{j_1\cdots j_s}$ is associated) ranges from $1$ to $s-1$. Moreover, it can be seen from Section 4 of \citet{Jovanovic_et_al2015} that
\begin{align*}
&\int_{\mathbb{R}^{s-1}}k_{i}^{j_1\cdots j_s}(x_1,\cdots,x_s)\mathrm{d}x_1\cdots\mathrm{d}x_{s-1} \\
&\leqslant \sum_{p_1,\cdots,p_{m_i}=1}^{D} \lambda R^s \psi^{m_i - 1} = \lambda R^s D^{m_i}\psi^{m_i - 1}
\leqslant \lambda R^{s + m_i - 1} D^{m_i}
\leqslant \lambda R^{2s - 2} D^{s-1} .
\end{align*}
Note that the right-hand side of the inequality is independent of $i$. Therefore, we get $C_{j_1\cdots j_s}^{\mathrm{red}}(\mathbb{R}^{s-1}) \leqslant \lambda R^{2s - 2} D^{s-1}T_{s}$, where $T_{s}$ is the number of rooted multifurcation trees with $s$ labelled leaves. Although there is no closed-form formula for $T_s$, it is known (see e.g. entry \texttt{A000311} of On-Line Encyclopedia of Integer Sequences \citep{oeis}) that $T_s \sim s^{s-1} 2^{-1/2}\exp(-s)(2\log 2 - 1)^{-s + 1/2}$ \citep{Kotesovec2014} when $s\rightarrow\infty$. This asymptotic relation, in conjunction with the Stirling's formula $s! \sim \sqrt{2\pi } s^{s+1/2} \exp(-s)$, imply that there exists constant $C_0 > 0$ such that
\begin{align*}
C_{j_1\cdots j_s}^{\mathrm{red}}(\mathbb{R}^{s-1})
  \leqslant C_0^s (s-1)!.
\end{align*}

\subsection{Second-order tail measures}
The first proposition reveals the connection between the moments of mutual-excitation kernels and that of the convolution of the kernels, which plays an important role in the construction of cumulant density of Hawkes processes.
\begin{proposition}\label{alpha_moment_convolution_prop}
Let $\boldsymbol{K}(\cdot) = \boldsymbol{\nu}\odot\boldsymbol{G}(\cdot)$, where $\boldsymbol{\nu} = \{\nu_{ij}\}_{i,j=1,\cdots,D}$ is a $D\times D$ matrix whose entries are all nonnegative, $\boldsymbol{G}(\cdot)=\{g_{ij}(\cdot)\}_{i,j=1,\cdots,D}$ is a $D\times D$ matrix of probability density functions satisfying $\max_{i,j}\int_{\mathbb{R}}|x|^{\alpha}g_{ij}(x)\mathrm{d}x<\infty$ for some $0<\alpha\leqslant 1$, and $\odot$ means the Hadamard (element-wise) product of matrices. Let $\boldsymbol{K}^{*1}(\cdot) = \boldsymbol{K}(\cdot)$, $\boldsymbol{K}^{*(m+1)}(x) = \int_{\mathbb
R}\boldsymbol{K}^{*(m)}(x - y)\boldsymbol{K}(y)\mathrm{d}y$, $k\in\mathbb{N}$. Then for any $m\in\mathbb{N}$ and any $i,j=1,\cdots,D$, $\int_{\mathbb{R}}|x|^{\alpha}[\boldsymbol{K}^{*m}(x)]_{ij}\mathrm{d}x\leqslant m[\boldsymbol{\nu}^m]_{ij}\max_{a,b}\int_{\mathbb{R}}|x|^{\alpha}g_{ab}(x)\mathrm{d}x$.
\begin{proof}
First, note that $\int_{\mathbb{R}}\boldsymbol{K}^{*(m)}(x)\mathrm{d}x = \boldsymbol{\nu}^m$ as the convolution of probability density functions is again a probability density function. We will now prove the proposition by induction.

When $m=1$, the proposition can be verified directly. Suppose now the proposition holds for $m = n$, then when $m=n+1$, since $|u+v|^{\alpha}\leqslant |u|^{\alpha} + |v|^{\alpha}$ for any $u,v\in\mathbb{R}$, we have
\begin{align*}
&\int_{\mathbb{R}}|x|^{\alpha}[\boldsymbol{K}^{*(n+1)}(x)]_{ij}\mathrm{d}x
= \int_{\mathbb{R}}\int_{\mathbb{R}}|x|^{\alpha}[\boldsymbol{K}^{*(n)}(x - y)\boldsymbol{K}(y)]_{ij}\mathrm{d}y\mathrm{d}x \\
&\leqslant \int_{\mathbb{R}}\int_{\mathbb{R}}|z|^{\alpha}[\boldsymbol{K}^{*(n)}(z)\boldsymbol{K}(y)]_{ij}\mathrm{d}y\mathrm{d}z
+ \int_{\mathbb{R}}\int_{\mathbb{R}}|y|^{\alpha}[\boldsymbol{K}^{*(n)}(z)\boldsymbol{K}(y)]_{ij}\mathrm{d}y\mathrm{d}z \\
&=\sum_{k=1}^D \int_{\mathbb{R}}|z|^{\alpha}[\boldsymbol{K}^{*(n)}(z)]_{ik}\mathrm{d}z\int_{\mathbb{R}}[\boldsymbol{K}(y)]_{kj}\mathrm{d}y
+\sum_{k=1}^D \int_{\mathbb{R}}[\boldsymbol{K}^{*(n)}(z)]_{ik}\mathrm{d}z\int_{\mathbb{R}}[|y|^{\alpha}\boldsymbol{K}(y)]_{kj}\mathrm{d}y \\
&\leqslant n\left(\sum_{k=1}^D [\boldsymbol{\nu}^n]_{ik}\nu_{kj}\right)\max_{a,b}\int_{\mathbb{R}}|x|^{\alpha}g_{ab}(x)\mathrm{d}x
+\left(\sum_{k=1}^D[\boldsymbol{\nu}^n]_{ik}\nu_{kj}\right)\max_{a,b}\int_{\mathbb{R}}|x|^{\alpha}g_{ab}(x)\mathrm{d}x\\
&= (n+1) [\boldsymbol{\nu}^{n+1}]_{ij}\max_{a,b}\int_{\mathbb{R}}|x|^{\alpha}g_{ab}(x)\mathrm{d}x
\end{align*}
for any $i,j=1,\cdots,D$. This concludes the proof.
\end{proof}
\end{proposition}

\begin{proposition}
If the mutual-excitation kernels $g_{ij}$, $i,j=1,\cdots,D$ of the Hawkes process satisfy $\max_{i,j}\int_{\mathbb{R}}|x|^{\beta}g_{ij}(x)\mathrm{d}x<\infty$ for some $0<\beta\leqslant 1$, then $\sum_{i,j}\int_{\mathbb{R}}|u|^{\beta}\mathrm{d}C_{ij}^{\mathrm{red}}(u)<\infty$.
\begin{proof}
Let $\boldsymbol{K}(\cdot) = \boldsymbol{\nu}\odot\boldsymbol{G}(\cdot)$, $\boldsymbol{\lambda} = (\lambda_1,\cdots,\lambda_D)^{\top} = (\boldsymbol{\mathrm{I}}_{D} - \boldsymbol{\nu})^{-1}\boldsymbol{\mu}$ 
where $\boldsymbol{\nu}$, $\boldsymbol{\mu}$ and $\boldsymbol{G}(\cdot)$ are defined in Definition \ref{HP_def}. Define $\Psi_t^{ij} = [\sum_{n=1}^{\infty}\boldsymbol{K}^{*n}(t)]_{ij}$ and $R_t^{ij} = \mathds{1}_{\{i=j\}}\delta(t) + \Psi_t^{ij}$, $i,j=1,\cdots,D$, where $\boldsymbol{K}^{*n}(\cdot)$ is the matrix function convolution defined in Proposition~\ref{alpha_moment_convolution_prop} and $\delta(\cdot)$ is the Dirac delta function. It is worth noting that $\int_{\mathbb{R}}\Psi_t^{ij}\mathrm{d}t = \sum_{n=1}^{\infty}[\boldsymbol{\nu}^n]_{ij} = [(\boldsymbol{\mathrm{I}}_{D} - \boldsymbol{\nu})^{-1}\boldsymbol{\nu}]_{ij}$ in view of the proof of Proposition~\ref{alpha_moment_convolution_prop} and the fact that the spectral radius of $\boldsymbol{\nu}$ is smaller than 1. Taking into account equation (37) of \citet{Jovanovic_et_al2015} and $|x+y|^{\beta}\leqslant|x|^{\beta} + |y|^{\beta}$ for any $x,y\in\mathbb{R}$, we have
\begin{align*}
&\int_{\mathbb{R}}|u|^{\beta}\mathrm{d}C_{ij}^{\mathrm{red}}(u)
= \sum_{m=1}^D\lambda_m\int_{\mathbb{R}}\int_{\mathbb{R}}|u|^{\beta} R_{u - v}^{im}R_{ - v}^{mj} \mathrm{d}v \mathrm{d}u\\
&=\sum_{m=1}^D\lambda_m\int_{\mathbb{R}}\int_{\mathbb{R}}|u|^{\beta} \Psi_{u + v}^{im}\Psi_{ v}^{mj} \mathrm{d}v \mathrm{d}u + (\lambda_i+\lambda_j)\int_{\mathbb{R}}|u|^{\beta} \Psi_{u}^{ij} \mathrm{d}u\\
&\leqslant \sum_{m=1}^D\lambda_m\int_{\mathbb{R}}\int_{\mathbb{R}}|x|^{\beta} \Psi_{x}^{im}\Psi_{ y}^{mj} \mathrm{d}x\mathrm{d}y  
+\sum_{m=1}^D\lambda_m\int_{\mathbb{R}}\int_{\mathbb{R}}|y|^{\beta} \Psi_{x}^{im}\Psi_{ y}^{mj} \mathrm{d}x\mathrm{d}y + (\lambda_i+\lambda_j)\int_{\mathbb{R}}|u|^{\beta} \Psi_{u}^{ij} \mathrm{d}u\\
&\leqslant \sum_{m=1}^D\lambda_m [(\boldsymbol{\mathrm{I}}_{D} - \boldsymbol{\nu})^{-1}\boldsymbol{\nu}]_{mj}\int_{\mathbb{R}}|x|^{\beta} \Psi_{x}^{im} \mathrm{d}x + \sum_{m=1}^D \lambda_m [(\boldsymbol{\mathrm{I}}_{D} - \boldsymbol{\nu})^{-1}\boldsymbol{\nu}]_{im}\int_{\mathbb{R}}|y|^{\beta} \Psi_{y}^{mj} \mathrm{d}y + (\lambda_i+\lambda_j)\int_{\mathbb{R}}|u|^{\beta} \Psi_{u}^{ij} \mathrm{d}u
\end{align*}
for any $i,j=1,\cdots,D$. The proof is completed by noting that
\begin{align*}
&\int_{\mathbb{R}}|x|^{\beta} \Psi_{x}^{pq} \mathrm{d}x
=\sum_{l=1}^{\infty} \int_{\mathbb{R}}|x|^{\beta} [\boldsymbol{K}^{* l}(x)]_{pq} \mathrm{d}x
\leqslant \left(\sum_{l=1}^{\infty} l[\boldsymbol{\nu}^l]_{pq}\right)\max_{a,b}\int_{\mathbb{R}}|x|^{\alpha}g_{ab}(x)\mathrm{d}x \\
&=[(\boldsymbol{\mathrm{I}}_{D} - \boldsymbol{\nu})^{-2}\boldsymbol{\nu}]_{pq}\max_{a,b}\int_{\mathbb{R}}|x|^{\alpha}g_{ab}(x)\mathrm{d}x
<\infty
\end{align*}
for any $p,q=1,\cdots,D$ due to Proposition~\ref{alpha_moment_convolution_prop}.
\end{proof}
\end{proposition}

\section{Cumulants of finite Fourier transform and the Whittle likelihood}\label{decomp_periodogram_sect}
This section contains results about the asymptotic behavior of the cumulants of the finite Fourier transform. There are the building blocks of the functional CLT of the spectral empirical process.

\subsection{Decompositions concerning second-order cumulants}
The following function is needed in the decomposition of the second-order cumulants of the finite Fourier transform.
\begin{proposition}\label{prop_R2}
\begin{align*}
&R_{T,2}(u,\bar{\omega}):=\int_{\mathbb{R}}( \mathds{1}_{\{0<\frac{u+v}{T}<1\}} - 1 )\mathds{1}_{\{0<\frac{v}{T}<1\}}\exp(-\mathrm{i}\bar{\omega}v)\mathrm{d}v \\
&= -\mathds{1}_{\{|u|\geqslant T\}} \Delta_T(\bar{\omega})
-\mathds{1}_{\{|u|< T\}}S_{T,2}(u,\bar{\omega}).
\end{align*}
where $\Delta_T(\bar{\omega}) = \int_{0}^T\exp(-\mathrm{i}\bar{
\omega} v) \mathrm{d}v$ is the Dirichlet kernel, 
\begin{align}\label{def_S2_eq}
& S_{T,2}(u,\bar{\omega}) := \mathds{1}_{\{u\geqslant0\}}\int_{T-u}^{T}\exp(-\mathrm{i}\bar{\omega}v)\mathrm{d}v + \mathds{1}_{\{u<0\}}\int_{0}^{-u}\exp(-\mathrm{i}\bar{\omega}v)\mathrm{d}v \\
&=\mathds{1}_{\{\bar{\omega} = 0\}}|u| + \mathds{1}_{\{\bar{\omega} \neq 0\}}\left(\mathds{1}_{\{u\geqslant0\}} \frac{\exp(-\mathrm{i}\bar{\omega}T)(1 - \exp(\mathrm{i}\bar{\omega}u))}{-\mathrm{i}\bar{\omega}} 
+ \mathds{1}_{\{u<0\}}\frac{ 1 - \exp(\mathrm{i}\bar{\omega}u) }{\mathrm{i}\bar{\omega}}\right) \notag
\end{align}
satisfying $|S_{T,2}(u,\bar{\omega})|\leqslant |u|$ for any $u,\bar{\omega}\in\mathbb{R}$.

\begin{proof}
\begin{align*}
&\int_{\mathbb{R}}( \mathds{1}_{\{0<\frac{u+v}{T}<1\}} - 1 )\mathds{1}_{\{0<\frac{v}{T}<1\}}\exp(-\mathrm{i}\bar{\omega}v)\mathrm{d}v
=\int_{0}^T( \mathds{1}_{\{0<\frac{u+v}{T}<1\}} - 1 )\exp(-\mathrm{i}\bar{\omega}v)\mathrm{d}v \\
&=-\mathds{1}_{\{u\geqslant0\}}\int_{\max\{T-u, 0\}}^T\exp(-\mathrm{i}\bar{\omega}v)\mathrm{d}v
-\mathds{1}_{\{u<0\}}\int_{0}^{\min\{-u, T\}}\exp(-\mathrm{i}\bar{\omega}v)\mathrm{d}v.
\end{align*}
When $|u|\geqslant T$, we have
\begin{align*}
&-\mathds{1}_{\{u\geqslant0\}}\int_{\max\{T-u, 0\}}^T\exp(-\mathrm{i}\bar{\omega}v)\mathrm{d}v
-\mathds{1}_{\{u<0\}}\int_{0}^{\min\{-u, T\}}\exp(-\mathrm{i}\bar{\omega}v)\mathrm{d}v \\
&=-\int_{0}^T\exp(-\mathrm{i}\bar{\omega}v)\mathrm{d}v
=-\Delta_T(\bar{\omega}) .
\end{align*}
When $|u|< T$,
\begin{align*}
&-\mathds{1}_{\{u\geqslant0\}}\int_{\max\{T-u, 0\}}^T\exp(-\mathrm{i}\bar{\omega}v)\mathrm{d}v
-\mathds{1}_{\{u<0\}}\int_{0}^{\min\{-u, T\}}\exp(-\mathrm{i}\bar{\omega}v)\mathrm{d}v \\
&=-\mathds{1}_{\{u\geqslant0\}}\int_{T-u}^T\exp(-\mathrm{i}\bar{\omega}v)\mathrm{d}v
-\mathds{1}_{\{u<0\}}\int_{0}^{-u}\exp(-\mathrm{i}\bar{\omega}v)\mathrm{d}v
=-S_{T,2}(u,\bar{\omega}).
\end{align*}
Moreover,
\begin{align*}
&|S_{T,2}(u,\bar{\omega})|
=\left|\mathds{1}_{\{u\geqslant0\}}\int_{T-u}^T\exp(-\mathrm{i}\bar{\omega}v)\mathrm{d}v
+\mathds{1}_{\{u<0\}}\int_{0}^{-u}\exp(-\mathrm{i}\bar{\omega}v)\mathrm{d}v \right| \\
&\leqslant \mathds{1}_{\{u\geqslant0\}}\int_{T-u}^T\mathrm{d}v
+\mathds{1}_{\{u<0\}}\int_{0}^{-u}\mathrm{d}v 
\leqslant |u|
\end{align*}
holds for any $u,\bar{\omega}\in\mathbb{R}$. When $\bar{\omega} = 0$, $S_{T,2}(u,\bar{\omega}) = |u|$. When $\bar{\omega} \neq 0$, straightforward calculations yield Equation~\eqref{def_S2_eq}.
\end{proof}
\end{proposition}

The following proposition introduces a useful decomposition of the second-order cumulants of the finite Fourier transform.
\begin{proposition}\label{cov_J_prop}
For any $\omega_1,~\omega_2\in\mathbb{R}$ and any $j_1,j_2\in\{1,\cdots,D\}$,
\begin{align*}
&\mathrm{cum}(J_T^{j_1}(\omega_{1}), J_T^{j_2}(\omega_{2})) 
=T^{-1}\Delta_T(\omega_1+\omega_2)\left(f_{j_1j_2}(\omega_1) - \int_{|u|\geqslant T} \exp(-\mathrm{i}\omega_1 u)\mathrm{d}C_{j_1j_2}^{\mathrm{red}}(u) \right) \\
&-T^{-1} \int_{|u|< T} S_{T,2}(u,\omega_1+\omega_2) \exp(-\mathrm{i}\omega_1 u)\mathrm{d}C_{j_1j_2}^{\mathrm{red}}(u) ,
\end{align*}
where $\Delta_T(\bar{\omega})$ and $S_{T,2}(u,\bar{\omega})$ are defined in Proposition~\ref{prop_R2}.
\begin{proof}
In view of Definition~\ref{HPCum_def} and Equation~\eqref{cum_def1}, we have by direct calculation
\begin{align*}
&\mathrm{cum}(J_T^{j_1}(\omega_{1}), J_T^{j_2}(\omega_{2})) 
= T^{-1}\int_{0}^T\int_{0}^T \exp(-\mathrm{i}\omega_1 x)\exp(-\mathrm{i}\omega_2 y) \mathrm{d}C_{j_1j_2}(x,y) \\
&= T^{-1}\int_{\mathbb{R}}\int_{\mathbb{R}}\mathds{1}_{\{0<x< T\}}\mathds{1}_{\{0<y< T\}} \exp(-\mathrm{i}\omega_1 x)\exp(-\mathrm{i}\omega_2 y) \mathrm{d}C_{j_1j_2}^{\mathrm{red}}(x-y)\mathrm{d}y \\
&= T^{-1}\int_{\mathbb{R}}\int_{\mathbb{R}}\mathds{1}_{\{0<u+v< T\}}\mathds{1}_{\{0<v< T\}} \exp(-\mathrm{i}\omega_1 u)\exp(-\mathrm{i}(\omega_1+\omega_2) v) \mathrm{d}C_{j_1j_2}^{\mathrm{red}}(u)\mathrm{d}v \\
&=T^{-1}\int_{\mathbb{R}}\exp(-\mathrm{i}\omega_1 u)\int_{\mathbb{R}}\mathds{1}_{\{0<u+v< T\}}\mathds{1}_{\{0<v< T\}} \exp(-\mathrm{i}(\omega_1+\omega_2) v) \mathrm{d}v\mathrm{d}C_{j_1j_2}^{\mathrm{red}}(u) \\
&=T^{-1}\int_{\mathbb{R}}\exp(-\mathrm{i}\omega_1 u)\left(R_{T,2}(u,\omega_1+\omega_2) + \Delta_T(\omega_1+\omega_2)\right)\mathrm{d}C_{j_1j_2}^{\mathrm{red}}(u) \\
&=T^{-1}\Delta_T(\omega_1+\omega_2)\int_{\mathbb{R}}\exp(-\mathrm{i}\omega_1 u)\mathrm{d}C_{j_1j_2}^{\mathrm{red}}(u) +T^{-1}\int_{\mathbb{R}}\exp(-\mathrm{i}\omega_1 u)R_{T,2}(u,\omega_1+\omega_2)\mathrm{d}C_{j_1j_2}^{\mathrm{red}}(u)\\
&=T^{-1} \Delta_T(\omega_1+\omega_2) f_{j_1j_2}(\omega_1) +T^{-1}\int_{\mathbb{R}}\exp(-\mathrm{i}\omega_1 u)R_{T,2}(u,\omega_1+\omega_2)\mathrm{d}C_{j_1j_2}^{\mathrm{red}}(u),
\end{align*}
where $f_{j_1j_2}$ comes from Definition~\ref{HPcsd_def}. Since $R_{T,2}(u,\bar{\omega})= -\mathds{1}_{\{|u|\geqslant T\}} \Delta_T(\bar{\omega})- \mathds{1}_{\{|u|< T\}}S_{T,2}(u,\bar{\omega})$ according to Proposition~\ref{prop_R2}, we get
\begin{align*}
&\int_{\mathbb{R}}\exp(-\mathrm{i}\omega_1 u) R_{T,2}(u,\omega_1+\omega_2)\mathrm{d}C_{j_1j_2}^{\mathrm{red}}(u) \\
&=-\Delta_T(\omega_1+\omega_2)\int_{|u|\geqslant T} \exp(-\mathrm{i}\omega_1 u)\mathrm{d}C_{j_1j_2}^{\mathrm{red}}(u)
- \int_{|u|< T} S_{T,2}(u,\omega_1+\omega_2) \exp(-\mathrm{i}\omega_1 u)\mathrm{d}C_{j_1j_2}^{\mathrm{red}}(u).
\end{align*}
Putting things together, we finally have
\begin{align*}
&\mathrm{cum}(J_T(\omega_{1}), J_T(\omega_{2})) 
=T^{-1} f_{j_1 j_2}(\omega_1) \Delta_T(\omega_1+\omega_2) \\
&-T^{-1}\Delta_T(\omega_1+\omega_2)\int_{|u|\geqslant T} \exp(-\mathrm{i}\omega_1 u)\mathrm{d}C_{j_1j_2}^{\mathrm{red}}(u)
-T^{-1} \int_{|u|< T} S_{T,2}(u) \exp(-\mathrm{i}\omega_1 u)\mathrm{d}C_{j_1j_2}^{\mathrm{red}}(u) \\
&=T^{-1}\Delta_T(\omega_1+\omega_2)\left(f_{j_1 j_2}(\omega_1) -  \int_{|u|\geqslant T} \exp(-\mathrm{i}\omega_1 u)\mathrm{d}C_{j_1j_2}^{\mathrm{red}}(u)\right)
-T^{-1} \int_{|u|< T} S_{T,2}(u) \exp(-\mathrm{i}\omega_1 u)\mathrm{d}C_{j_1j_2}^{\mathrm{red}}(u) .
\end{align*}
\end{proof}
\end{proposition}

\begin{corollary}\label{cov_J_corollary}
For any $s_1,s_2\in\mathbb{Z}$, $j_1,j_2\in\{1,\cdots,D\}$,
\begin{align}\label{cov_J_for_ff_eq1}
&\mathrm{cum}(J_T^{j_1}(2\pi s_1/T), J_T^{j_2}(2\pi s_2/T)) 
=\mathds{1}_{\{s_1 + s_2 = 0\}}\left(f_{j_1 j_2}(2\pi s_1/T) - \int_{|u|\geqslant T} \exp(-\mathrm{i}2\pi s_1 u/T)\mathrm{d}C_{j_1j_2}^{\mathrm{red}}(u) \right) \notag \\
&-T^{-1} \int_{|u|< T} S_{T,2}(u, 2\pi(s_1+s_2)/T) \exp(-\mathrm{i}2\pi s_1 u/T)\mathrm{d}C_{j_1j_2}^{\mathrm{red}}(u) \\
&=\mathds{1}_{\{s_1 + s_2 = 0\}}\left(f_{j_1 j_2}(2\pi s_1/T) - \int_{|u|\geqslant T} \exp(-\mathrm{i}2\pi s_1 u/T)\mathrm{d}C_{j_1j_2}^{\mathrm{red}}(u) - \frac{1}{T}\int_{|u|<T} |u| \exp(-\mathrm{i}2\pi s_1 u/T)\mathrm{d}C_{j_1j_2}^{\mathrm{red}}(u) \right) \notag \\
&- \mathds{1}_{\{s_1 + s_2 \neq 0\}}V_{T}^{j_1 j_2}(s_1,s_2),\label{cov_J_for_ff_eq2}
\end{align} 
where
\begin{align*}
V_{T}^{j_1 j_2}(s_1,s_2) = \frac{1}{T}\int_{|u|<T}\left(-\mathds{1}_{\{u \geqslant 0\}} + \mathds{1}_{\{u < 0\}}\right)\frac{1 - \exp(\mathrm{i}2\pi(s_1+s_2) u /T)}{\mathrm{i}2\pi(s_1+s_2)/T}\mathrm{d}C_{j_1j_2}^{\mathrm{red}}(u) .    
\end{align*}
Furthermore, we have
$\mathds{1}_{\{s_1 + s_2 \neq 0\}}V_{T}^{j_1 j_2}(s_1,s_2) = o(1)$ and
\begin{align*}
\mathrm{cum}(J_T^{j_1}(2\pi s_1/T), J_T^{j_2}(2\pi s_2/T)) 
=\mathds{1}_{\{s_1 + s_2 = 0\}}( f_{j_1 j_2}(2\pi s_1/T) + o(1)) + \mathds{1}_{\{s_1 + s_2 \neq 0\}}o(1).
\end{align*}
All $o$ and $O$ terms are independent of $s_1$, $s_2$.
\begin{proof}
Notice that $\Delta_T(2\pi s/T) = T\mathds{1}_{\{s = 0\}}$ for any $s\in\mathbb{Z}$, then \eqref{cov_J_for_ff_eq1} is a direct consequence of Proposition~\ref{cov_J_prop}. \eqref{cov_J_for_ff_eq2} can be derived from \eqref{cov_J_for_ff_eq1} and \eqref{def_S2_eq} by noting
\begin{align*}
&S_{T,2}(u, 2\pi(s_1+s_2)/T) 
= \mathds{1}_{\{s_1 + s_2 = 0\}} |u| \\
&+  \mathds{1}_{\{s_1 + s_2 \neq 0\}}\left(\mathds{1}_{\{u \geqslant 0\}} \frac{1 - \exp(\mathrm{i}2\pi(s_1+s_2) u /T)}{-\mathrm{i}2\pi(s_1+s_2)/T}
+\mathds{1}_{\{u < 0\}} \frac{1 - \exp(\mathrm{i}2\pi(s_1+s_2) u /T)}{\mathrm{i}2\pi(s_1+s_2)/T}\right) .
\end{align*}
Recall that the cumulant measure $C_{j_1 j_2}$ of the Hawkes process is a finite measure (see Proposition~\ref{HPCum_bound_prop}), we first have
\begin{align*}
\left|\int_{|u|\geqslant T} \exp(-\mathrm{i}2\pi s_1 u/T)\mathrm{d}C_{j_1j_2}^{\mathrm{red}}(u)\right|
\leqslant C_{j_1j_2}^{\mathrm{red}}((-\infty, -T] \cup[T, \infty))
=o(1) .
\end{align*}
Since $|S_{T,2}(u,2\pi(s_1+s_2)/T)|\leqslant |u|$ according to Proposition~\ref{prop_R2}, then for any $u\in \mathbb{R}$, we have $\mathds{1}_{\{|u|/T<1\}}(|u|/T) \leqslant 1$, where the right-hand side is $C_{j_1j_2}^{\mathrm{red}}$-integrable and $\mathds{1}_{\{|u|/T<1\}}(|u|/T) \rightarrow 0$ when $T\rightarrow\infty$. By the dominated convergence theorem, we get
\begin{align*}
&\mathds{1}_{\{s_1 + s_2 \neq 0\}}|V_{T}^{j_1 j_2}(s_1,s_2)|\leqslant\frac{1}{T}\left|\int_{|u|< T} S_{T,2}(u, 2\pi(s_1+s_2)/T) \exp(-\mathrm{i}2\pi s_1 u/T)\mathrm{d}C_{j_1j_2}^{\mathrm{red}}(u)\right| \\
&\leqslant \int_{|u|< T} \frac{|u|}{T} \mathrm{d}C_{j_1j_2}^{\mathrm{red}}(u)
=o(1).
\end{align*}
The proof is finished by noting the above two $o(1)$ bounds and \eqref{cov_J_for_ff_eq2}. 

\end{proof}
\end{corollary}

\begin{remark}
For the sake of readability, there are a few details we didn't explicitly mention in the above proof. The dominated convergence theorem only allows us to show that for arbitrary positive sequence $\{T_n\}_{n\in\mathbb{N}}$ with $T_n\rightarrow\infty$ when $n\rightarrow\infty$, we have $\lim_{n\rightarrow\infty}\int_{|u|< T_n} (|u|/T_n) \mathrm{d}C_{j_1j_2}^{\mathrm{red}}(u)= 0$. To further claim $\lim_{T\rightarrow\infty}\int_{|u|< T} (|u|/T) \mathrm{d}C_{j_1j_2}^{\mathrm{red}}(u)= 0$, we need to invoke Heine's theorem (see e.g.~\citet[Proposition 1, Section 3.2.1]{Zorich2015}).
\end{remark}
    
\begin{corollary}\label{cov_J_corollary2}
If $\sum_{j_1,j_2=1\cdots,D}\int_{\mathbb{R}}|u|^{\beta}\mathrm{d}C_{j_1j_2}^{\mathrm{red}}(u)<\infty$ for some $1/2<\beta\leqslant1$. Then for any $s\in\mathbb{N}$,
\begin{align*}
\mathrm{cum}(J_T^{j_1}(2\pi s/T), J_T^{j_2}(-2\pi s/T))
=f_{j_1 j_2}(2\pi s/T) + o(1/\sqrt{T}),
\end{align*} 
where the little o term is independent of $j_1,j_2$ and $s$. Furthermore, $f_{j_1 j_2}$ is H{\"o}lder continuous satisfying $\max_{j_1, j_2=1,\cdots,D}|f_{j_1 j_2}(u) - f_{j_1 j_2}(v)|\leqslant C|u-v|^{\beta}$ for any $u,v\in\mathbb{R}$ and some $C>0$.

\begin{proof}
According to Corollary~\ref{cov_J_corollary},
\begin{align*}
&\mathrm{cum}(J_T^{j_1}(2\pi s/T), J_T^{j_2}(-2\pi s/T)) \\
&=f_{j_1 j_2}(2\pi s/T) - \int_{|u|\geqslant T} \exp(-\mathrm{i}2\pi s u/T)\mathrm{d}C_{j_1j_2}^{\mathrm{red}}(u) - \frac{1}{T}\int_{|u|<T} |u| \exp(-\mathrm{i}2\pi s u/T)\mathrm{d}C_{j_1j_2}^{\mathrm{red}}(u) .
\end{align*} 
The first claim can be shown by noticing that
\begin{align*}
\int_{|u|\geqslant T} \mathrm{d}C_{j_1j_2}^{\mathrm{red}}(u) 
\leqslant \frac{1}{\sqrt{T}}\int_{|u|\geqslant T} \sqrt{u} \mathrm{d}C_{j_1j_2}^{\mathrm{red}}(u) 
\leqslant \frac{1}{\sqrt{T}}\int_{|u|\geqslant T} \sqrt{u} \sum_{j_1,j_2}\mathrm{d}C_{j_1j_2}^{\mathrm{red}}(u) 
= o(1/\sqrt{T}) ,
\end{align*}
and
\begin{align*}
\frac{1}{\sqrt{T}}\int_{|u|< T} |u| \mathrm{d}C_{j_1j_2}^{\mathrm{red}}(u) 
\leqslant \frac{1}{\sqrt{T}}\int_{|u|< T} |u| \sum_{j_1,j_2}\mathrm{d}C_{j_1j_2}^{\mathrm{red}}(u)
= o(1) ,
\end{align*}
where the second equation is due to the dominated convergence theorem since $\mathds{1}_{\{|u|< T\}}|u|/\sqrt{T} \rightarrow 0$ for any $u\in\mathbb{R}$, $\mathds{1}_{\{|u|< T\}}|u|/\sqrt{T} \leqslant \sqrt{|u|}$ and $\sqrt{|u|}$ is integrable with respect to $\sum_{j_1,j_2}\mathrm{d}C_{j_1j_2}^{\mathrm{red}}(u)$ under the current assumption.

We will now show the H{\"o}lder continuity of $f_{j_1 j_2}$. Recall that $C_{j_1j_2}^{\mathrm{red}}$ are finite measures according to Proposition~\ref{HPCum_bound_prop}, then for any $u, v\in\mathbb{R}$,
\begin{align*}
&|f_{j_1 j_2}(u) - f_{j_1 j_2}(v)|
=\left|\int_{\mathbb{R}}(\cos(x u) - \cos(y u))\mathrm{d}C_{j_1j_2}^{\mathrm{red}}(u) - \mathrm{i}\int_{\mathbb{R}}(\sin(x u) - \sin(y u))\mathrm{d}C_{j_1j_2}^{\mathrm{red}}(u)\right| \\
&\leqslant \int_{\mathbb{R}}|\cos(x u) - \cos(y u)|\mathrm{d}C_{j_1j_2}^{\mathrm{red}}(u) + \int_{\mathbb{R}}|\sin(x u) - \sin(y u)|\mathrm{d}C_{j_1j_2}^{\mathrm{red}}(u) \\
&=2\int_{\mathbb{R}}\left|\sin\left(\frac{x+y}{2}u\right)\sin\left(\frac{x-y}{2}u\right)\right|\mathrm{d}C_{j_1j_2}^{\mathrm{red}}(u) + 2\int_{\mathbb{R}}\left|\cos\left(\frac{x+y}{2}u\right)\sin\left(\frac{x-y}{2}u\right)\right|\mathrm{d}C_{j_1j_2}^{\mathrm{red}}(u) \\
&\leqslant 2^{2-\beta} |x - y|^{\beta} \int_{\mathbb{R}}|u|^{\beta}\mathrm{d}C_{j_1j_2}^{\mathrm{red}}(u) 
\leqslant C |x - y|^{\beta},
\end{align*}
where the last inequality is due to $|\sin x|\leqslant\min\{1,|x|^{\beta}\}$ for any $x\in\mathbb{R}$ and $C=2^{2-\beta}\max_{j_1,j_2}\int_{\mathbb{R}}|u|^{\beta}\mathrm{d}C_{j_1j_2}^{\mathrm{red}}(u)$.
\end{proof}
\end{corollary}

\subsection{Decompositions concerning higher order cumulants}
To prove the asymptotic normality, we need information about the cumulants of all orders of the finite Fourier transform. To this end, we will decompose the higher-order cumulants in a manner similar to the previous section.
\begin{proposition}\label{cum_J_prop}
Let $l\geqslant2$. For any $\omega_1,\cdots,\omega_{l-1}\in\mathbb{R}$, and any $k_1,\cdots,k_l\in\{1,\cdots,D\}$,
\begin{align*}
&\mathrm{cum}(J_T^{k_j}(\omega_j);j=1,\cdots,l) \\
&=T^{-\frac{l}{2}}\left(f_{k_1 \cdots k_l}(\omega_1,\cdots,\omega_{l-1}) -\int_{\cup_{j=1}^{l-1}\{|y_j|>T\}}\exp\left(-\mathrm{i}\sum_{j=1}^{l-1}\omega_j y_j\right) \mathrm{d}C_{k_1\cdots k_l}^{\mathrm{red}}(y_1, \cdots, y_{l-1}) \right)\Delta_T\left(\sum_{j=1}^l\omega_j\right) \\
&- T^{-\frac{l}{2}}\int_{[-T,T]^{l-1}} \exp\left(-\mathrm{i}\sum_{j=1}^{l-1}\omega_j y_j\right) S_{T,l}(y_1,\cdots,y_{l-1};\sum_{j=1}^l\omega_j)\mathrm{d}C_{k_1\cdots k_l}^{\mathrm{red}}(y_1, \cdots, y_{l-1}), 
\end{align*}
where 
\begin{align*}
S_{T,l}(y_1,\cdots,y_{l-1};\bar{\omega}) = \mathds{1}_{\cap_{j=1}^{l-1} \{|y_j| \leqslant T\}}\int_0^T \mathds{1}_{\cup_{j=1}^{l-1} \{v \leqslant -y_j\}\cup\{v \geqslant T - y_j\}}  \exp\left(-\mathrm{i}\bar{\omega}v \right)\mathrm{d}v 
\end{align*}
satisfies $S_{T,l}(y_1,\cdots,y_{l-1};\bar{\omega})\leqslant\mathds{1}_{\cap_{j=1}^{l-1} \{|y_j| \leqslant T\}}\sum_{j=1}^{l-1} |y_j|$ . Moreover, we can derive the following expression:
\begin{align*}
&S_{T,l}(y_1,\cdots,y_{l-1};\bar{\omega}) \\
& = \left[\left(\max_{j:y_j<0}|y_j| + \max_{j:y_j\geqslant0}|y_j|\right)\mathds{1}_{\{\max_{j:y_j<0}|y_j| + \max_{j:y_j\geqslant0}|y_j|\leqslant T\}} \right. \\
& \left. + T\mathds{1}_{\cap_{j=1}^{l-1} \{|y_j| \leqslant T\}\cap\{\max_{j:y_j<0}|y_j| + \max_{j:y_j\geqslant0}|y_j|> T\}}\right]\mathds{1}_{\{\bar{\omega} = 0\}} \\
& + \left[\frac{1 - \exp(-\mathrm{i}\bar{\omega}T) }{\mathrm{i}\bar{\omega}}\mathds{1}_{\cap_{j=1}^{l-1} \{|y_j| \leqslant T\}\cap\{\max_{j:y_j<0}|y_j| + \max_{j:y_j\geqslant0}|y_j|> T\}} \right. \\
& \left. \left(\frac{1 - \exp(-\mathrm{i}\bar{\omega}\max_{j:y_j<0}|y_j|) }{\mathrm{i}\bar{\omega}} +  \exp(-\mathrm{i}\bar{\omega}T)\frac{\exp(\mathrm{i}\bar{\omega}\max_{j:y_j\geqslant0}|y_j|) - 1 }{\mathrm{i}\bar{\omega}}\right)\mathds{1}_{\{\max_{j:y_j<0}|y_j| + \max_{j:y_j\geqslant0}|y_j|\leqslant T\}} \right]\mathds{1}_{\{\bar{\omega} \neq 0\}} .
\end{align*}
In the expressions above, we adopt the convention that $\max_{j\in\emptyset}a_j = 0$.
\begin{proof}
\begin{align*}
&\mathrm{cum}(J_T^{k_j}(\omega_j);j=1,\cdots,l)
=T^{-\frac{l}{2}}\int_{(0,T)^l} \exp\left(-\mathrm{i}\sum_{j=1}^l\omega_j x_j\right) \mathrm{d}C_{k_1\cdots k_l}^{\mathrm{red}}(x_1 - x_l, \cdots, x_{l-1} - x_{l})\mathrm{d}x_l \\
&=T^{-\frac{l}{2}}\int_{\mathbb{R}^l} \mathds{1}_{\cap_{j=1}^{l-1} \{0< y_j + y_l < T\}} \mathds{1}_{ \{0< y_l < T\}} \exp\left(-\mathrm{i}\sum_{j=1}^{l-1}\omega_j y_j\right) \exp\left(-\mathrm{i}y_l\sum_{j=1}^l\omega_j \right) \mathrm{d}C_{k_1\cdots k_l}^{\mathrm{red}}(y_1, \cdots, y_{l-1})\mathrm{d}y_l \\
&=T^{-\frac{l}{2}}\int_{\mathbb{R}^{l-1}} \exp\left(-\mathrm{i}\sum_{j=1}^{l-1}\omega_j y_j\right) \mathrm{d}C_{k_1\cdots k_l}^{\mathrm{red}}(y_1, \cdots, y_{l-1})\mathrm{d}y_1\cdots\mathrm{d}y_{l-1} \Delta_T\left(\sum_{j=1}^l\omega_j\right) \\
&+T^{-\frac{l}{2}}\int_{\mathbb{R}^{l-1}} \exp\left(-\mathrm{i}\sum_{j=1}^{l-1}\omega_j y_j\right) R_{T,l}(y_1,\cdots,y_{l-1}; \sum_{j=1}^l\omega_j)\mathrm{d}C_{k_1\cdots k_l}^{\mathrm{red}}(y_1, \cdots, y_{l-1}) \\
&=T^{-\frac{l}{2}}f_{k_1\cdots k_l}(\omega_1,\cdots,\omega_{l-1})\Delta_T\left(\sum_{j=1}^l\omega_j\right) \\
&+ T^{-\frac{l}{2}}\int_{\mathbb{R}^{l-1}} \exp\left(-\mathrm{i}\sum_{j=1}^{l-1}\omega_j y_j\right) R_{T,l}(y_1,\cdots,y_{l-1}; \sum_{j=1}^l\omega_j)\mathrm{d}C_{k_1\cdots k_l}^{\mathrm{red}}(y_1, \cdots, y_{l-1}),
\end{align*}
where
\begin{align*}
R_{T,l}(y_1,\cdots,y_{l-1}; \bar{\omega})
=\int_0^T \left(\mathds{1}_{\cap_{j=1}^{l-1} \{0< y_j + v < T\}} - 1 \right)\exp\left(-\mathrm{i}\bar{\omega}v \right)\mathrm{d}v .
\end{align*}
If $\max_{j=1,\cdots,l-1}|y_j|>T$, then $\mathds{1}_{\cap_{j=1}^{l-1} \{0< y_j + v < T\}}=0$ for any $v\in(0,T)$. Therefore,
\begin{align*}
&R_{T,l}(y_1,\cdots,y_{l-1}; \bar{\omega})
=-\mathds{1}_{\cup_{j=1}^{l-1}\{|y_j|>T\}}\int_0^T \exp\left(-\mathrm{i}\bar{\omega}v \right)\mathrm{d}v 
-\mathds{1}_{\cap_{j=1}^{l-1} \{|y_j| \leqslant T\}}S_{T,l}(y_1,\cdots,y_{l-1};\bar{\omega}) \\
&=-\mathds{1}_{\cup_{j=1}^{l-1}\{|y_j|>T\}}\Delta_T\left(\bar{\omega}\right)
-\mathds{1}_{\cap_{j=1}^{l-1} \{|y_j| \leqslant T\}}S_{T,l}(y_1,\cdots,y_{l-1};\bar{\omega}) ,
\end{align*}
where
\begin{align*}
S_{T,l}(y_1,\cdots,y_{l-1};\bar{\omega}) = \mathds{1}_{\cap_{j=1}^{l-1} \{|y_j| \leqslant T\}}\int_0^T \mathds{1}_{\cup_{j=1}^{l-1} \{v \leqslant -y_j\}\cup\{v \geqslant T - y_j\}}  \exp\left(-\mathrm{i}\bar{\omega}v \right)\mathrm{d}v .
\end{align*}
Under the condition $\max_{j=1,\cdots,l-1}|y_j|\leqslant T$, we have
\begin{align*}
&\cup_{j=1}^{l-1} \{v \leqslant -y_j\}\cap\{0<v<T\} = \cup_{j: y_j<0}\{0<v\leqslant-y_j\} = \{0<v\leqslant\max_{j:y_j<0}\{-y_j\}\} \\
&=\{0<v\leqslant\max_{j:y_j<0}|y_j|\}
\end{align*}
and
\begin{align*}
&\cup_{j=1}^{l-1} \{v \geqslant T-y_j\}\cap\{0<v<T\} = \cup_{j: y_j\geqslant0}\{T-y_j\leqslant v<T\} = \{\min_{j:y_j\geqslant0}\{T-y_j\}\leqslant v<T\} \\
&=\{T-\max_{j:y_j\geqslant0}|y_j|\leqslant v<T\} .
\end{align*}
In the expressions above, we adopt the convention that $\cup_{j\in\emptyset}A_j = \emptyset$. Therefore,
\begin{align*}
&|S_{T,l}(y_1,\cdots,y_{l-1};\bar{\omega})|\leqslant \mathds{1}_{\cap_{j=1}^{l-1} \{|y_j| \leqslant T\}}\int_0^T \left(\mathds{1}_{\{0<v\leqslant\max_{j:y_j<0}|y_j|\}} +  \mathds{1}_{\{T-\max_{j:y_j\geqslant0}|y_j|\leqslant v<T\}}\right)\mathrm{d}v \\
&=\mathds{1}_{\cap_{j=1}^{l-1} \{|y_j| \leqslant T\}} \left(\max_{j:y_j<0}|y_j| + \max_{j:y_j\geqslant0}|y_j|\right)
\leqslant \mathds{1}_{\cap_{j=1}^{l-1} \{|y_j| \leqslant T\}}\sum_{j=1}^{l-1} |y_j|.
\end{align*}

We now further decompose $S_{T,l}$. When $\max_{j:y_j<0}|y_j| + \max_{j:y_j\geqslant0}|y_j|> T$, we have 
\begin{align*}
\int_0^T \mathds{1}_{\cup_{j=1}^{l-1} \{v \leqslant -y_j\}\cup\{v \geqslant T - y_j\}}  \exp\left(-\mathrm{i}\bar{\omega}v \right)\mathrm{d}v
=\int_0^T \exp\left(-\mathrm{i}\bar{\omega}v \right)\mathrm{d}v
=T\mathds{1}_{\{\bar{\omega} = 0\}} + \frac{1 - \exp(-\mathrm{i}\bar{\omega}T)}{\mathrm{i}\bar{\omega}}\mathds{1}_{\{\bar{\omega} \neq 0\}} .
\end{align*}
When $\max_{j:y_j<0}|y_j| + \max_{j:y_j\geqslant0}|y_j|\leqslant T$,
\begin{align*}
&\int_0^T \mathds{1}_{\cup_{j=1}^{l-1} \{v \leqslant -y_j\}\cup\{v \geqslant T - y_j\}}  \exp\left(-\mathrm{i}\bar{\omega}v \right)\mathrm{d}v
=\int_0^{\max_{j:y_j<0}|y_j|} \exp\left(-\mathrm{i}\bar{\omega}v \right)\mathrm{d}v
+\int_{T - \max_{j:y_j\geqslant0}|y_j|}^{T} \exp\left(-\mathrm{i}\bar{\omega}v \right)\mathrm{d}v \\
&=\left(\max_{j:y_j<0}|y_j| + \max_{j:y_j\geqslant0}|y_j|\right)\mathds{1}_{\{\bar{\omega} = 0\}} \\
&+ \left(\frac{1 - \exp(-\mathrm{i}\bar{\omega}\max_{j:y_j<0}|y_j|)}{\mathrm{i}\bar{\omega}} +  \exp(-\mathrm{i}\bar{\omega}T)\frac{\exp(\mathrm{i}\bar{\omega}\max_{j:y_j\geqslant0}|y_j|) - 1 }{\mathrm{i}\bar{\omega}}\right)\mathds{1}_{\{\bar{\omega} \neq 0\}} .
\end{align*}

\end{proof}
\end{proposition}

\begin{corollary}\label{cum_J_corollary}
Let $\omega_i = 2\pi i/T$, where $i\in Q$, $Q=[i_1,\cdots,i_{l}]$ is a multi-subset of $\mathbb{Z}$ (i.e. the elements of $Q$ are integers and we allow multiple instances) with cardinality (the sum of the multiplicities of all elements) $|Q|=l\geqslant2$. Then for any $k_1,\cdots,k_l\in\{1,\cdots,D\}$,
\begin{align*}
&\mathrm{cum}(J_T^{k_j}(\omega_{i_j});j=1,\cdots,l) \\
&= T^{1-\frac{l}{2}}\left[f_{k_1\cdots k_l}(\omega_{i_1},\cdots,\omega_{i_{l-1}}) -\int_{\{\max_{j:y_j<0}|y_j| + \max_{j:y_j\geqslant0}|y_j|> T\}}\exp\left(-\mathrm{i}\sum_{j=1}^{l-1}\omega_{i_j} y_j\right) \mathrm{d}C_{k_1\cdots k_l}^{\mathrm{red}}(y_1, \cdots, y_{l-1}) \right. \\
&\left. -\frac{1}{T}\int_{\{\max_{j:y_j<0}|y_j| + \max_{j:y_j\geqslant0}|y_j|\leqslant T\}}\left(\max_{j:y_j<0}|y_j| + \max_{j:y_j\geqslant0}|y_j|\right)\exp\left(-\mathrm{i}\sum_{j=1}^{l-1}\omega_{i_j} y_j\right) \mathrm{d}C_{k_1\cdots k_l}^{\mathrm{red}}(y_1, \cdots, y_{l-1})\right]\mathds{1}_{\{\sum_{i\in Q}i = 0\}} \\
&+ T^{-\frac{l}{2}} U_{T,l}^{k_1\cdots k_l}\left(Q\right)\mathds{1}_{\{\sum_{i\in Q}i \neq 0\}}
\end{align*}
with
\begin{align*}
&U_{T,l}^{k_1\cdots k_l}\left(Q\right) \\
&=\int_{\{\max_{j:y_j<0}|y_j| + \max_{j:y_j\geqslant0}|y_j|\leqslant T\}} \left\{\frac{\exp\left[\mathrm{i}\left(\sum_{i\in Q}\omega_i\right)\left(\max_{j:y_j\geqslant0}|y_j|\right)\right] - \exp\left[-\mathrm{i}\left(\sum_{i\in Q}\omega_i\right)\left(\max_{j:y_j<0}|y_j|\right)\right] }{\mathrm{i}\sum_{i\in Q}\omega_i} \right.\\
&\left. \exp\left(-\mathrm{i}\sum_{j=1}^{l-1}\omega_{i_j} y_j\right) \right\} \mathrm{d}C_{k_1\cdots k_l}^{\mathrm{red}}(y_1, \cdots, y_{l-1}) .
\end{align*}
Moreover,
\begin{enumerate}[(a)]
\item We have
\begin{align*}
\left|\int_{\{\max_{j:y_j<0}|y_j| + \max_{j:y_j\geqslant0}|y_j|> T\}}\exp\left(-\mathrm{i}\sum_{j=1}^{l-1}\omega_{i_j} y_j\right) \mathrm{d}C_{k_1\cdots k_l}^{\mathrm{red}}(y_1, \cdots, y_{l-1}) \right|
=o(1),
\end{align*}
\begin{align*}
\frac{1}{T}\left|\int_{\{\max_{j:y_j<0}|y_j| + \max_{j:y_j\geqslant0}|y_j|\leqslant T\}}\left(\max_{j:y_j<0}|y_j| + \max_{j:y_j\geqslant0}|y_j|\right)\exp\left(-\mathrm{i}\sum_{j=1}^{l-1}\omega_{i_j} y_j\right) \mathrm{d}C_{k_1\cdots k_l}^{\mathrm{red}}(y_1, \cdots, y_{l-1})\right| = o(1)
\end{align*}
and
\begin{align*}
U_{T,l}^{k_1\cdots k_l}\left(Q\right) = o(T) 
\end{align*}
when $T\rightarrow\infty$. Consequently,
\begin{align*}
&\mathrm{cum}(J_T^{k_j}(\omega_{i_j});j=1,\cdots,l) 
= T^{1-\frac{l}{2}}\left[\left(f_{k_1\cdots k_l}(\omega_{i_1},\cdots,\omega_{i_{l-1}}) +o(1) 
\right)\mathds{1}_{\{\sum_{i\in Q}i = 0\}} 
+ o(1)\mathds{1}_{\{\sum_{i\in Q}i \neq 0\}} \right] \\
&=T^{1-\frac{l}{2}}\left[O(1) 
\mathds{1}_{\{\sum_{i\in Q}i = 0\}} 
+ o(1)\mathds{1}_{\{\sum_{i\in Q}i \neq 0\}} \right],
\end{align*}
where the big O and little o terms might depend on $l$ but do not depend on $Q$ or $k_1,\cdots,k_l$.
\item Also,
\begin{align*}
&\left|f_{k_1\cdots k_l}(\omega_{i_1},\cdots,\omega_{i_{l-1}}) -\int_{\{\max_{j:y_j<0}|y_j| + \max_{j:y_j\geqslant0}|y_j|> T\}}\exp\left(-\mathrm{i}\sum_{j=1}^{l-1}\omega_{i_j} y_j\right) \mathrm{d}C_{k_1\cdots k_l}^{\mathrm{red}}(y_1, \cdots, y_{l-1})\right. \\
& \left. -\frac{1}{T}\int_{\{\max_{j:y_j<0}|y_j| + \max_{j:y_j\geqslant0}|y_j|\leqslant T\}}\left(\max_{j:y_j<0}|y_j| + \max_{j:y_j\geqslant0}|y_j|\right)\exp\left(-\mathrm{i}\sum_{j=1}^{l-1}\omega_{i_j} y_j\right) \mathrm{d}C_{k_1\cdots k_l}^{\mathrm{red}}(y_1, \cdots, y_{l-1}) \right| \\
&\leqslant C_{k_1\cdots k_l}^{\mathrm{red}}\left(\mathbb{R}^{l-1}\right)  , 
\end{align*}
and
\begin{align*}
|U_{T,l}^{k_1\cdots k_l}\left(Q\right)|
\leqslant T\frac{C_{k_1\cdots k_l}^{\mathrm{red}}\left(\mathbb{R}^{l-1}\right)}{\pi} \frac{1}{|\sum_{j\in Q} j|} .
\end{align*}
As a result,
\begin{align*}
|\mathrm{cum}(J_T^{k_j}(\omega_{i_j});j=1,\cdots,l)|
\leqslant T^{1 - \frac{l}{2}}C_{k_1\cdots k_l}^{\mathrm{red}}\left(\mathbb{R}^{l-1}\right) \left(\mathds{1}_{\{\sum_{i\in Q}i = 0\}} 
+ \frac{1}{|\sum_{j\in Q} j|}\mathds{1}_{\{\sum_{i\in Q}i \neq 0\}} \right)
\end{align*}

\end{enumerate}

\begin{proof}
First, for Fourier frequencies $\omega_j = 2\pi j/T$, $j\in Q$, we have $\Delta_T(\sum_{j\in Q}\omega_j) = T\mathds{1}_{\{\sum_{j\in Q}\omega_j = 0\}} = T\mathds{1}_{\{\sum_{j\in Q}j = 0\}}$ and $\exp(-\mathrm{i}T\sum_{j\in Q}\omega_j) = 1$. Moreover, since $\cup_{j=1}^{l-1}\{|y_j|>T\} \subset \{\max_{j:y_j<0}|y_j| + \max_{j:y_j\geqslant0}|y_j|> T\}$, we have $\cup_{j=1}^{l-1}\{|y_j|>T\} \cup \left(\cap_{j=1}^{l-1}\{|y_j|\leqslant T\}\cap\{\max_{j:y_j<0}|y_j| + \max_{j:y_j\geqslant0}|y_j|> T\}\right)=\{\max_{j:y_j<0}|y_j| + \max_{j:y_j\geqslant0}|y_j|> T\}$. Then the first equation is a direct consequence of Proposition~\ref{cum_J_prop}. 

Regarding part (a), notice that 
\begin{align*}
&\left|\int_{\{\max_{j:y_j<0}|y_j| + \max_{j:y_j\geqslant0}|y_j|> T\}}\exp\left(-\mathrm{i}\sum_{j=1}^{l-1}\omega_{i_j} y_j\right) C_{k_1\cdots k_l}^{\mathrm{red}}(y_1,\cdots,y_{l-1}) \right| \\
&\leqslant C_{k_1\cdots k_l}^{\mathrm{red}}\left(\left\{\max_{j:y_j<0}|y_j| + \max_{j:y_j\geqslant0}|y_j|> T\right\}\right) 
\leqslant C_{k_1\cdots k_l}^{\mathrm{red}}\left(\cup_{j=1}^{l-1}\left\{|y_j|>\frac{T}{2}\right\}\right)
=o(1),
\end{align*}
where the last equation is due to the fact that $C_{k_1\cdots k_l}^{\mathrm{red}}$ is a finite measure (see Proposition~\ref{HPCum_bound_prop}). The second equation of part (a) can be proven by noticing 
\begin{align*}
&\frac{1}{T}\left|\int_{\{\max_{j:y_j<0}|y_j| + \max_{j:y_j\geqslant0}|y_j|\leqslant T\}}\left(\max_{j:y_j<0}|y_j| + \max_{j:y_j\geqslant0}|y_j|\right)\exp\left(-\mathrm{i}\sum_{j=1}^{l-1}\omega_{i_j} y_j\right) \mathrm{d}C_{k_1\cdots k_l}^{\mathrm{red}}(y_1, \cdots, y_{l-1})\right| \\
&\leqslant \frac{1}{T}\int_{\{\max_{j:y_j<0}|y_j| + \max_{j:y_j\geqslant0}|y_j|\leqslant T\}}\left(\max_{j:y_j<0}|y_j| + \max_{j:y_j\geqslant0}|y_j|\right)\mathrm{d}C_{k_1\cdots k_l}^{\mathrm{red}}(y_1, \cdots, y_{l-1})    
\end{align*}
and applying the dominated convergence theorem in the same manner as the proof of Corollary~\ref{cov_J_corollary} (a). The third equation can be derived similarly by noting
\begin{align*}
|U_{T,l}^{k_1\cdots k_l}\left(Q\right)|
\leqslant\int_{\{\max_{j:y_j<0}|y_j| + \max_{j:y_j\geqslant0}|y_j|\leqslant T\}}\left(\max_{j:y_j<0}|y_j| + \max_{j:y_j\geqslant0}|y_j|\right) \mathrm{d}C_{k_1\cdots k_l}^{\mathrm{red}}(y_1, \cdots, y_{l-1}).
\end{align*}

For part (b), by direct calculations,
\begin{align*}
&\left|f_{k_1\cdots k_l}(\omega_{i_1},\cdots,\omega_{i_{l-1}}) -\int_{\{\max_{j:y_j<0}|y_j| + \max_{j:y_j\geqslant0}|y_j|> T\}}\exp\left(-\mathrm{i}\sum_{j=1}^{l-1}\omega_{i_j} y_j\right) \mathrm{d}C_{k_1\cdots k_l}^{\mathrm{red}}(y_1, \cdots, y_{l-1})\right. \\
& -\left.\frac{1}{T}\int_{\{\max_{j:y_j<0}|y_j| + \max_{j:y_j\geqslant0}|y_j|\leqslant T\}}\left(\max_{j:y_j<0}|y_j| + \max_{j:y_j\geqslant0}|y_j|\right)\exp\left(-\mathrm{i}\sum_{j=1}^{l-1}\omega_{i_j} y_j\right) \mathrm{d}C_{k_1\cdots k_l}^{\mathrm{red}}(y_1, \cdots, y_{l-1}) \right| \\
&= \left|\int_{\{\max_{j:y_j<0}|y_j| + \max_{j:y_j\geqslant0}|y_j|\leqslant T\}} \left[ 1 - \frac{1}{T}\left( \max_{j:y_j<0}|y_j| + \max_{j:y_j\geqslant0}|y_j|\right)\right]\exp\left(-\mathrm{i}\sum_{j=1}^{l-1}\omega_{i_j} y_j\right) \mathrm{d}C_{k_1\cdots k_l}^{\mathrm{red}}(y_1, \cdots, y_{l-1}) \right| \\
&\leqslant C_{k_1\cdots k_l}^{\mathrm{red}}\left(\left\{\max_{j:y_j<0}|y_j| + \max_{j:y_j\geqslant0}|y_j|\leqslant T\right\}\right)
\leqslant C_{k_1\cdots k_l}^{\mathrm{red}}\left(\mathbb{R}^{l-1}\right)   
\end{align*}
and
\begin{align*}
&|U_{T,l}^{k_1\cdots k_l}\left(Q\right)|
\leqslant 2C_{k_1\cdots k_l}^{\mathrm{red}}\left(\left\{\max_{j:y_j<0}|y_j| + \max_{j:y_j\geqslant0}|y_j|\leqslant T\right\}\right) \frac{1}{|\sum_{i\in Q}\omega_i|} \\
&=T\frac{1}{\pi}C_{k_1\cdots k_l}^{\mathrm{red}}\left(\left\{\max_{j:y_j<0}|y_j| + \max_{j:y_j\geqslant0}|y_j|\leqslant T\right\}\right) \frac{1}{|\sum_{i\in Q}i|}
\leqslant T \frac{C_{k_1\cdots k_l}^{\mathrm{red}}\left(\mathbb{R}^{l-1}\right)}{\pi} \frac{1}{|\sum_{i\in Q}i|} .
\end{align*}
\end{proof}
\end{corollary}

\section{Results concerning deterministic terms}\label{deterministic_sect}
This section contains inequalities and results about the convergence (rate) of sequences of deterministic functions.
\begin{proposition}\label{proposition_log_trace_ineq}
Suppose $\boldsymbol{A}$ is a $D\times D$ matrix whose eigenvalues are all positive. Then we have
\begin{equation*}
\mathrm{tr}(\boldsymbol{A}) - \log\det\boldsymbol{A} \geqslant D,
\end{equation*}
where the equality holds if and only if the eigenvalues of $\boldsymbol{A}$ are all $1$.
\begin{proof}
Let $\lambda_1,\cdots,\lambda_{D}$ be the eigenvalues of $\boldsymbol{A}$. Then by AM-GM inequality,
\begin{align*}
\frac{1}{D}\mathrm{tr}(\boldsymbol{A})
= \frac{1}{D}\sum_{j=1}^D \lambda_j 
\geqslant \left(\prod_{j=1}^D \lambda_j\right)^{\frac{1}{D}}
=(\det\boldsymbol{A})^{\frac{1}{D}}.
\end{align*}
Combining the above inequality and $x-1\geqslant\log x$ for any $x>0$, we therefore have
\begin{align*}
\frac{1}{D}\mathrm{tr}(\boldsymbol{A}) - 1
\geqslant \log\left(\frac{1}{D}\mathrm{tr}(\boldsymbol{A})\right)
\geqslant \log\left[(\det\boldsymbol{A})^{\frac{1}{D}}\right]
=\frac{1}{D}\log\det\boldsymbol{A} .
\end{align*}
The target inequality can then be obtained by simple re-arrangements. For the equality to hold, $\boldsymbol{A}$ should have identical eigenvalues and at the same time satisfies $\mathrm{tr}(\boldsymbol{A})/D = 1$. These two conditions together lead to $\lambda_1=\cdots=\lambda_D=1$.

\end{proof}
\end{proposition}

\subsection{Convergence of Riemann integral and expected empirical process}\label{consistency_sect}
This subsection is devoted to proving the convergence of deterministic terms including the expectation of the spectral empirical process and other related quantities.

The following proposition describes the convergence of Riemann sum to Riemann integral.
\begin{proposition}\label{conv_Riemann_sum_prop}
Let $\phi:\mathbb{R}\rightarrow\mathbb{C}$ be uniformly continuous. For any $\epsilon>0$, there exists $t_0>0$ such that for any $T>t_0$, we have
\begin{align*}
\left|\frac{1}{T}\sum_{p=1}^{M_T} \phi(\omega_p) - \frac{1}{2\pi}\int_0^{2\pi\frac{M_T}{T}}\phi(x)\mathrm{d}x \right|
\leqslant\frac{1}{2\pi}\frac{M_T}{T}\epsilon ,
\end{align*}
where $\omega_p = 2\pi p/T$ are the Fourier frequencies.
\begin{proof}
\begin{align*}
\frac{1}{T}\sum_{p=1}^{M_T} \phi(\omega_p)
=\frac{1}{2\pi}\sum_{p=1}^{M_T} \int_{\omega_{p-1}}^{\omega_p} \phi(\omega_p)\mathrm{d}x
\end{align*}
and
\begin{align*}
\frac{1}{2\pi}\int_0^{2\pi\frac{M_T}{T}}\phi(x)\mathrm{d}x
=\frac{1}{2\pi}\sum_{p=1}^{M_T} \int_{\omega_{p-1}}^{\omega_p} \phi(x)\mathrm{d}x .
\end{align*}
Therefore,
\begin{align*}
\left|\frac{1}{T}\sum_{p=1}^{M_T} \phi(\omega_p) - \frac{1}{2\pi}\int_0^{2\pi\frac{M_T}{T}}\phi(x)\mathrm{d}x \right|
\leqslant\frac{1}{2\pi}\sum_{p=1}^{M_T} \int_{\omega_{p-1}}^{\omega_p} |\phi(\omega_p) - \phi(x)|\mathrm{d}x .
\end{align*}
By the uniform continuity of $\phi$, for any $\epsilon > 0$, there exists $\delta>0$ such that for any $x,~y$ satisfying $|x-y|<\delta$, we have $|\phi(x) - \phi(y)|<\epsilon$. Since for any $x\in [\omega_{p-1}, \omega_{p}]$, we have $|\omega_{p} - x|\leqslant 2\pi/T$. Hence, when $T>t_0:= 2\pi/\delta$, we get $|\phi(\omega_p) - \phi(x)|<\epsilon$. Consequently, 
\begin{align*}
\left|\frac{1}{T}\sum_{p=1}^{M_T} \phi(\omega_p) - \frac{1}{2\pi}\int_0^{2\pi\frac{M_T}{T}}\phi(x)\mathrm{d}x \right|
\leqslant\frac{1}{2\pi}\sum_{p=1}^{M_T} \int_{\omega_{p-1}}^{\omega_p} |\phi(\omega_p) - \phi(x)|\mathrm{d}x 
\leqslant \frac{1}{2\pi}\frac{M_T}{T}\epsilon.
\end{align*}
\end{proof}
\end{proposition}
The following result for 2D integral also holds. The proof is similar to the that of the above proposition and hence is omitted.
\begin{proposition}\label{conv_Riemann_sum_prop2}
Let $\phi:\mathbb{R}^2\rightarrow\mathbb{C}$ be continuous on a sufficiently large compact neighborhood of the origin. If $M_T/T\rightarrow L<\infty$ when $T\rightarrow\infty$,
\begin{align*}
\left|\frac{1}{T^2}\sum_{p_1=1}^{M_T}\sum_{p_2=1}^{M_T} \phi(\omega_{p_1}, \omega_{p_2}) - \frac{1}{(2\pi)^2}\int_{[0, 2\pi L]^2}\phi(x_1, x_2)\mathrm{d}x_1 \mathrm{d}x_2 \right|
\rightarrow 0 ,
\end{align*}
where $\omega_p = 2\pi p/T$ are the Fourier frequencies.
\end{proposition}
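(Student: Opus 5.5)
The plan is to mimic the one-dimensional argument of Proposition~\ref{conv_Riemann_sum_prop}, applied on the growing square $[0,2\pi M_T/T]^2$, and then compare that square with the limiting square $[0,2\pi L]^2$. Fix a compact neighbourhood $K$ of the origin large enough to contain $[0,2\pi(L+1)]^2$; this is how we read ``sufficiently large''. On $K$, $\phi$ is uniformly continuous and bounded, say by $\|\phi\|_{K}$. For $T$ large we have $M_T/T\leqslant L+1$, so every point we use lies in $K$.

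\textbf{Step 1 (Riemann sum vs.\ integral over the growing square).} Write
\begin{align*}
\frac{1}{T^2}\sum_{p_1,p_2=1}^{M_T}\phi(\omega_{p_1},\omega_{p_2})
=\frac{1}{(2\pi)^2}\sum_{p_1,p_2=1}^{M_T}\int_{[\omega_{p_1-1},\omega_{p_1}]\times[\omega_{p_2-1},\omega_{p_2}]}\phi(\omega_{p_1},\omega_{p_2})\,\mathrm{d}x_1\mathrm{d}x_2 .
\end{align*}
Subtract $(2\pi)^{-2}\int_{[0,2\pi M_T/T]^2}\phi$, decomposed over the same cells. On each cell the two arguments differ by at most $2\sqrt2\pi/T$ in Euclidean norm. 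By uniform continuity on $K$, given $\epsilon>0$ the integrand difference is below $\epsilon$ once $T$ is large. The total error is therefore at most $(2\pi)^{-2}(2\pi M_T/T)^2\epsilon\leqslant (L+1)^2\epsilon$.

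\textbf{Step 2 (growing square vs.\ limiting square).} Bound
\begin{align*}
\left|\int_{[0,2\pi M_T/T]^2}\phi-\int_{[0,2\pi L]^2}\phi\right|\leqslant \|\phi\|_{K}\, l_{\mathrm{Leb}}\big([0,2\pi M_T/T]^2\,\triangle\,[0,2\pi L]^2\big).
\end{align*}
The symmetric difference has measure at most $2\cdot 2\pi|M_T/T-L|\cdot 2\pi(L+1)$, which tends to $0$ because $M_T/T\to L$. Combining the two steps and letting $\epsilon\downarrow0$ gives the claim.

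\textbf{Main obstacle.} There is no serious one; the only point needing care is the meaning of ``sufficiently large compact neighbourhood''. We must fix $K$ in advance so that it covers all grid points for large $T$, since uniform continuity and boundedness are available only on compacts. This is why we use $L+1$ as a buffer rather than $L$.
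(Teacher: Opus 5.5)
Your proof is correct and follows the route the paper intends: the paper omits this proof as a direct analogue of the cell-by-cell uniform-continuity argument of Proposition~\ref{conv_Riemann_sum_prop}. Your Step~2, which bounds the symmetric difference of $[0,2\pi M_T/T]^2$ and $[0,2\pi L]^2$ using boundedness of $\phi$ on $K\supset[0,2\pi(L+1)]^2$, supplies the passage to the limiting square that the one-dimensional proposition leaves implicit.
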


\begin{theorem}\label{deterministic_conv_thm}
Let $\Theta\subset \mathbb{R}^d$ be a compact set (equipped with the Euclidean norm). For each $\theta\in\Theta$, $\Phi_{\theta}(\cdot)=\{\phi_{\theta, ab}(\cdot)\}_{a,b=1,\cdots,D}$ is a matrix of uniformly continuous, bounded (complex-valued) functions satisfying $\max_{a,b=1\cdots,D}\sup_{\theta\in\Theta}\|\phi_{\theta, ab}\|_{\infty}< \infty$. Moreover, there exist constants $C,\alpha>0$ such that for any $\theta_1,~\theta_2\in\Theta$, $\max_{a,b=1,\cdots,D}\|\phi_{\theta_1, ab} - \phi_{\theta_2, ab}\|_{\infty} \leqslant C\|\theta_1 - \theta_2\|^{\alpha}$. Provided that $M_T/T \rightarrow L < \infty$ when $T\rightarrow\infty$, we will have
\begin{align}\label{E_weighted_periodogram_conv_eqn}
\sup_{\theta\in\Theta}\left|\mathrm{E}A_T(\Phi_{\theta}) - \frac{1}{2\pi}\int_0^{2\pi L}\sum_{a=1}^D\sum_{b=1}^D\phi_{\theta, ab}(x)f_{ba}(x)\mathrm{d}x \right|\rightarrow 0,
\end{align}
where $A_T(\cdot)$ is defined in Equation~\ref{def_SEP} and $f_{ab}$, $a,b=1,\cdots,D$ are the Bartlett spectral density of the underlying Hawkes process. In addition, if $\Phi_{\theta}(\cdot)$ is positive-definite and $\inf_{\theta\in\Theta}\inf_{x\in\mathbb{R}}\det\Phi_{\theta}(x)>0$, we also have
\begin{align}\label{log_phi_conv_eqn}
\sup_{\theta\in\Theta}\left|\frac{1}{T}\sum_{p=1}^{M_T}\log\det\Phi_{\theta}(\omega_p)
- \frac{1}{2\pi}\int_0^{2\pi L}\log\det\Phi_{\theta}(x)\mathrm{d}x \right|\rightarrow 0 .
\end{align}
\begin{proof}
By Corollary~\ref{cov_J_corollary}, we have for any $s\in\mathbb{Z}$, $s\neq 0$,
\begin{align*}
&\mathrm{E}J_T^{a}(2\pi s/T)J_T^{b}(-2\pi s/T)=\mathrm{cum}(J_T^{a}(2\pi s/T), J_T^{b}(-2\pi s/T)) 
=f_{ab}(2\pi s/T) + o(1)
\end{align*}
due to the fact that $\mathrm{E}J_T^{a}(2\pi s/T)=0$ for any $s\in\mathbb{Z}$ and $a=1,\cdots,D$; and
\begin{align*}
&\mathrm{E}A_T(\Phi_{\theta})
=\frac{1}{T}\sum_{p=1}^{M_T}\sum_{a=1}^D\sum_{b=1}^D\phi_{\theta, ba}(\omega_p)\mathrm{E}J_T^{b}(-\omega_p)J_T^{a}(\omega_p) \\
&=\frac{1}{T}\sum_{p=1}^{M_T}\sum_{a=1}^D\sum_{b=1}^D\phi_{\theta, ab}(\omega_p)f_{ba}(\omega_p)
+o(1)\frac{1}{T}\sum_{p=1}^{M_T}\sum_{a=1}^D\sum_{b=1}^D\phi_{\theta, ab}(\omega_p) .
\end{align*}
Since 
\begin{align*}
\sup_{\theta\in\Theta}\left|o(1)\frac{1}{T}\sum_{p=1}^{M_T}\sum_{a=1}^D\sum_{b=1}^D\phi_{\theta, ab}(\omega_p) \right|
\leqslant o(1)\frac{M_T}{T}D\max_{a,b=1,\cdots,D}\sup_{\theta\in\Theta}\|\phi_{\theta, ab}\|_{\infty} \rightarrow 0
\end{align*}
when $T\rightarrow\infty$, \eqref{E_weighted_periodogram_conv_eqn} reduces to
\begin{align*}
\sup_{\theta\in\Theta}\left|\frac{1}{T}\sum_{p=1}^{M_T}\sum_{a=1}^D\sum_{b=1}^D\phi_{\theta}(\omega_p)f_{ba}(\omega_p)
- \frac{1}{2\pi}\int_0^{2\pi L}\sum_{a=1}^D\sum_{b=1}^D\phi_{\theta, ab}(x)f_{ba}(x)\mathrm{d}x \right|\rightarrow 0 .
\end{align*}
As both $\phi_{\theta, ab}$ and $f_{ab}$ are uniformly continuous and bounded, their product is also uniformly continuous and bounded. Therefore, for each $\theta\in\Theta$, Proposition~\ref{conv_Riemann_sum_prop} and the boundedness of $\phi_{\theta, ab}f_{ab}$ together yield
\begin{align*}
\left|\frac{1}{T}\sum_{p=1}^{M_T}\sum_{a=1}^D\sum_{b=1}^D\phi_{\theta, ab}(\omega_p)f_{ba}(\omega_p)
- \frac{1}{2\pi}\int_0^{2\pi L} \sum_{a=1}^D\sum_{b=1}^D\phi_{\theta, ab}(x)f_{ba}(x)\mathrm{d}x \right|\rightarrow 0 .
\end{align*}
We now verify the equicontinuity of the left-hand side of the above equation. For any $\theta_1,~\theta_2\in\Theta$,
\begin{align*}
&\left|\frac{1}{T}\sum_{p=1}^{M_T}\sum_{a=1}^D\sum_{b=1}^D\phi_{\theta_1, ab}(\omega_p)f_{ba}(\omega_p)
- \frac{1}{T}\sum_{p=1}^{M_T}\sum_{a=1}^D\sum_{b=1}^D\phi_{\theta_2, ab}(\omega_p)f_{ba}(\omega_p) \right|
\leqslant \frac{1}{T}\sum_{p=1}^{M_T}\sum_{a=1}^D\sum_{b=1}^D|f_{ba}(\omega_p)| \|\phi_{\theta_1, ab} -\phi_{\theta_2, ab}\|_{\infty} \\
&\leqslant O(1)\|\theta_1 - \theta_2\|^{\alpha}.
\end{align*}
Here, we utilized the H{\"o}lder continuity of $\{\phi_{\theta, ab}\}_{\theta\in\Theta}$ and the boundedness of $f_{ba}$. Similarly, 
\begin{align*}
\left|\frac{1}{2\pi}\int_0^{2\pi L}\sum_{a=1}^D\sum_{b=1}^D\phi_{\theta_1, ab}(x)f_{ba}(x)\mathrm{d}x
- \frac{1}{2\pi}\int_0^{2\pi L}\sum_{a=1}^D\sum_{b=1}^D\phi_{\theta_2, ab}(x)f_{ba}(x)\mathrm{d}x \right|
\leqslant O(1)\|\theta_1 - \theta_2\|^{\alpha}.
\end{align*}
The equicontinuity is verified by combining the two inequalities. The proof of \eqref{E_weighted_periodogram_conv_eqn} is concluded by an application of the Arzel{\`a}-Ascoli theorem (see e.g. Theorem 21.7 of \citet{davidson}).

As for \eqref{log_phi_conv_eqn}, in view of the above derivation, it suffices to verify that for each $\theta\in\Theta$, $\log\det\Phi_{\theta}(\cdot)$ is uniformly continuous and the family $\{\log\det\Phi_{\theta}\}_{\theta\in\Theta}$ is H{\"o}lder continuous. Since $1 - 1/u\leqslant\log u \leqslant u - 1$ for any $u > 0$,
\begin{align*}
&|\log\det\Phi_{\theta}(x) - \log\det\Phi_{\theta}(y)| \leqslant \max\left\{\left| 1 - \frac{\det\Phi_{\theta}(y)}{\det\Phi_{\theta}(x)}\right|,\left| \frac{\det\Phi_{\theta}(x)}{\det\Phi_{\theta}(y)} - 1 \right|\right\} \\
&\leqslant \frac{1}{\inf_{v\in\Theta}\inf_{u\in\mathbb{R}}\det\Phi_{v}(u)}|\det\Phi_{\theta}(x) -\det\Phi_{\theta}(y)|
\end{align*}
holds for any $x,~y\in\mathbb{R}$ and any $\theta\in\Theta$. This shows that $\log\det\Phi_{\theta}(\cdot)$ is uniformly continuous due to the uniform continuity of $\phi_{\theta, ab}(\cdot)$. Similarly, for any $\theta_1,~\theta_2\in\Theta$, we have
\begin{align*}
\|\log\det\Phi_{\theta_1} - \log\det\Phi_{\theta_2}\|_{\infty}
\leqslant \frac{1}{\inf_{v\in\Theta}\inf_{u\in\mathbb{R}}\det\Phi_{v}(u)}\|\det\Phi_{\theta_1} -\det\Phi_{\theta_2}\|_{\infty},
\end{align*}
implying that $\{\log\det\Phi_{\theta}\}_{\theta\in\Theta}$ inherits H{\"o}lder continuity from $\{\phi_{\theta, ab}\}_{\theta\in\Theta}$.
\end{proof}
\end{theorem}

\subsection{Deterministic convergence rate}
\begin{proposition}\label{conv_rate_Riemann_sum_prop}
Suppose $\phi:\mathbb{R}\rightarrow\mathbb{C}$ satisfies that for any $|x-y|\leqslant 1$, $|\phi(x) - \phi(y)|\leqslant b|x-y|^{\beta}$, where $b>0$ and $0<\beta\leqslant1$. Then for sufficiently large $T$,
\begin{align*}
\left|\frac{1}{T}\sum_{p=1}^{M_T} \phi(\omega_p) - \frac{1}{2\pi}\int_0^{2\pi\frac{M_T}{T}}\phi(x)\mathrm{d}x \right|
\leqslant b \frac{(2\pi)^{\beta}M_T}{(\beta+1)T^{\beta+1}} ,
\end{align*}
where $\omega_p = 2\pi p/T$ are the Fourier frequencies.
\begin{proof}
\begin{align*}
\frac{1}{T}\sum_{p=1}^{M_T} \phi(\omega_p)
=\frac{1}{2\pi}\sum_{p=1}^{M_T} \int_{\omega_{p-1}}^{\omega_p} \phi(\omega_p)\mathrm{d}x
\end{align*}
and
\begin{align*}
\frac{1}{2\pi}\int_0^{2\pi\frac{M_T}{T}}\phi(x)\mathrm{d}x
=\frac{1}{2\pi}\sum_{p=1}^{M_T} \int_{\omega_{p-1}}^{\omega_p} \phi(x)\mathrm{d}x .
\end{align*}
Therefore, for any $T> 2\pi$,
\begin{align*}
&\left|\frac{1}{T}\sum_{p=1}^{M_T} \phi(\omega_p) - \frac{1}{2\pi}\int_0^{2\pi\frac{M_T}{T}}\phi(x)\mathrm{d}x \right|
\leqslant\frac{1}{2\pi}\sum_{p=1}^{M_T} \int_{\omega_{p-1}}^{\omega_p} |\phi(\omega_p) - \phi(x)|\mathrm{d}x \\
&\leqslant \frac{b}{2\pi}\sum_{p=1}^{M_T} \int_{\omega_{p-1}}^{\omega_p} (\omega_p - x)^{\beta}\mathrm{d}x
=b \frac{(2\pi)^{\beta}M_T}{(\beta+1)T^{\beta+1}}.
\end{align*}

\end{proof}
\end{proposition}

\begin{theorem}\label{deterministic_conv_rate_thm}
Let $\Theta\subset \mathbb{R}^d$ be a compact set (equipped with the Euclidean norm). For each $\theta\in\Theta$, $\Phi_{\theta}(\cdot)=\{\phi_{\theta, ab}(\cdot)\}_{a,b=1,\cdots,D}$ is a matrix of uniformly continuous, bounded (complex-valued) functions satisfying $\max_{a,b}\sup_{\theta\in\Theta}\|\phi_{\theta, ab}\|_{\infty}< \infty$ and for any $x,y\in\mathbb{R}$, there exist $b>0$ and $1/2<\beta\leqslant1$ such that $\max_{a,b}\sup_{\theta\in\Theta}|\phi_{\theta, ab}(x) - \phi_{\theta, ab}(y)|\leqslant B_1|x-y|^{\beta}$. Provided that $|M_T/T - L| = o(T^{-1/2})$ and $\max_{a,b}\int_{\mathbb{R}}|u|^{\beta}\mathrm{d}C_{ab}^{\mathrm{red}}(u)<\infty$, we will have
\begin{align}\label{E_weighted_periodogram_conv_rate_eqn}
\sup_{\theta\in\Theta}\left|\mathrm{E}A_T(\Phi_{\theta}) - \frac{1}{2\pi}\int_0^{2\pi L}\sum_{a=1}^D\sum_{b=1}^D\phi_{\theta, ab}(x)f_{ba}(x)\mathrm{d}x \right|
= o(1/\sqrt{T}),
\end{align}
where $A_T(\cdot)$ is defined in Equation~\ref{def_SEP} and $\boldsymbol{f}_2 = \{f_{ab}\}_{a,b=1,\cdots,D}$ is the the Bartlett spectral density matrix of the underlying Hawkes process. In addition, if $\inf_{\theta\in\Theta}\inf_{x\in\mathbb{R}}\det\Phi_{\theta}(x)>0$, we also have
\begin{align}\label{log_phi_conv_rate_eqn}
\sup_{\theta\in\Theta}\left|\frac{1}{T}\sum_{p=1}^{M_T}\log\det\Phi_{\theta}(\omega_p)
- \frac{1}{2\pi}\int_0^{2\pi L}\log\det\Phi_{\theta}(x)\mathrm{d}x \right|
= o(1/\sqrt{T}).
\end{align}
\begin{proof}
By Proposition~\ref{cov_J_corollary2} and direct calculation, we have
\begin{align*}
&\mathrm{E}A_T(\Phi_{\theta})
=\frac{1}{T}\sum_{p=1}^{M_T}\sum_{a=1}^D\sum_{b=1}^D\phi_{\theta, ab}(\omega_p)f_{ab}(\omega_p)
+o(M_T/T^{3/2}) ,
\end{align*}
and $\max_{a,b}|f_{ab}(u) - f_{ab}(v)|\leqslant B_2|u-v|^{\beta}$ for some $B_2>0$ and any $u,v\in\mathbb{R}$.

For any $|x-y|\leqslant 1$,
\begin{align*}
&|\phi_{\theta, ab}(x)f_{ba}(x) - \phi_{\theta, ab}(y)f_{ba}(y)|
\leqslant |\phi_{\theta, ab}(x)||f_{ba}(x) - f_{ba}(y)| + |f_{ba}(y)||\phi_{\theta, ab}(x)- \phi_{\theta, ab}(y)| \\
&\leqslant B_2\|\phi_{\theta, ab}\|_{\infty}|x-y|^{\beta} + B\|f_{ba}\|_{\infty}|x-y|^{\beta}
\leqslant (\max_{a,b}B_2\sup_{\theta\in\Theta}\|\phi_{\theta, ab}\|_{\infty} + B\|f_{ba}\|_{\infty})|x-y|^{\beta} . 
\end{align*}
This in fact shows that the product of two H{\"o}lder continuous functions with exponent $\beta$ is also H{\"o}lder continuous with exponent $\beta$. Therefore, Proposition~\ref{conv_rate_Riemann_sum_prop} yields
\begin{align*}
&\sup_{\theta\in\Theta}\left|\frac{1}{T}\sum_{p=1}^{M_T}\sum_{a=1}^D\sum_{b=1}^D\phi_{\theta, ab}(\omega_p)f_{ba}(\omega_p)
- \frac{1}{2\pi}\int_0^{2\pi \frac{M_T}{T}}\sum_{a=1}^D\sum_{b=1}^D\phi_{\theta, ab}(x)f_{ba}(x)\mathrm{d}x \right| \\
&\leqslant D^2\max_{a,b}(B_2\sup_{\theta\in\Theta}\|\phi_{\theta, ab}\|_{\infty} + B\|f_{ba}\|_{\infty}) \frac{(2\pi)^{\beta}}{(\beta+1)T^{\beta}} \frac{M_T}{T}
= o(1/\sqrt{T}) .
\end{align*}
Also,
\begin{align*}
&\sup_{\theta\in\Theta}\left|\frac{1}{2\pi}\int_0^{2\pi \frac{M_T}{T}}\sum_{a=1}^D\sum_{b=1}^D\phi_{\theta, ab}(x)f_{ba}(x)\mathrm{d}x - \frac{1}{2\pi}\int_0^{2\pi L}\sum_{a=1}^D\sum_{b=1}^D\phi_{\theta, ab}(x)f_{ba}(x)\mathrm{d}x\right| \\
&\leqslant D^2\max_{a,b}\sup_{\theta\in\Theta}\|\phi_{\theta, ab}\|_{\infty} \|f_{ba}\|_{\infty}\left|\frac{M_T}{T} - L\right|
=o(1/\sqrt{T}).
\end{align*}
The desired result is obtained by adding the above two equations.

The proof of \eqref{log_phi_conv_rate_eqn} is similar. Noting that since $1 - 1/u\leqslant\log u \leqslant u - 1$ for any $u > 0$, we have for any $x,y\in\mathbb{R}$,
\begin{align*}
&|\log\det\Phi_{\theta}(x) - \log\det\Phi_{\theta}(y)| \leqslant \max\left\{\left| 1 - \frac{\det\Phi_{\theta}(y)}{\det\Phi_{\theta}(x)}\right|,\left| \frac{\det\Phi_{\theta}(x)}{\det\Phi_{\theta}(y)} - 1 \right|\right\} \\
&\leqslant \frac{1}{\inf_{v\in\Theta}\inf_{u\in\mathbb{R}}\det\Phi_{v}(u)}|\det\Phi_{\theta}(x) -\det\Phi_{\theta}(y)|\leqslant \frac{C}{\inf_{v\in\Theta}\inf_{u\in\mathbb{R}}\det\Phi_{v}(u)}|x-y|^{\beta} ,
\end{align*}
where $C>0$ is a constant and the last inequality is due to the fact that the product of H{\"o}lder continuous functions with exponent $\beta$ is again H{\"o}lder continuous with exponent $\beta$.
\end{proof}
\end{theorem}

\section{Weak convergence of the spectral empirical process}\label{app_sect_weak_convergence}
This section is devoted to showing the CLT of the empirical process using the results in Sections~\ref{decomp_periodogram_sect} and \ref{deterministic_sect}.
\subsection{Useful results}
\begin{proposition}\label{cov_periodogram_prop}
For any Fourier frequencies $\omega_{s_1} = 2\pi s_1/T,~\omega_{s_2}=2\pi s_2/T,~ s_1,s_2\in\mathbb{N}$, and $j_1,j_2,k_1,k_2\in\{1,\cdots,D\}$,
\begin{align*}
&\mathrm{cov}(J_T^{j_1}(\omega_{s_1})J_T^{j_2}(-\omega_{s_1}), J_T^{k_1}(\omega_{s_2})J_T^{k_2}(-\omega_{s_2}))
=T^{-1}(f_{j_1 j_2 k_1 k_2}(\omega_{s_1},-\omega_{s_1},\omega_{s_2}) + o(1)) \\
&+ \mathds{1}_{\{s_1+s_2\neq0\}}V_{T}^{j_1 k_1}(s_1,s_2)V_{T}^{j_2 k_2}(-s_1,-s_2)\\ 
&+\mathds{1}_{\{s_1-s_2=0\}}(f_{j_1 k_2}(\omega_{s_1}) + o(1))(f_{j_2 k_1}(-\omega_{s_1}) + o(1)) + \mathds{1}_{\{s_1-s_2\neq0\}}V_{T}^{j_1 k_2}(s_1,-s_2)V_{T}^{j_2 k_1}(-s_1,s_2)  .
\end{align*}
where $V_T$ is defined in Corollary~\ref{cov_J_corollary} and the $o(1)$ terms are independent of $s_1$, $s_2$.
\begin{proof}
We have
\begin{align*}
&\mathrm{cov}(J_T^{j_1}(\omega_{s_1})J_T^{j_2}(-\omega_{s_1}), J_T^{k_1}(\omega_{s_2})J_T^{k_2}(-\omega_{s_2})) 
=\mathrm{cum}(J_T^{j_1}(\omega_{s_1}),J_T^{j_2}(-\omega_{s_1}), J_T^{k_1}(\omega_{s_2}),J_T^{k_2}(-\omega_{s_2})) \\
&+ \mathrm{cum}(J_T^{j_1}(\omega_{s_1}), J_T^{k_1}(\omega_{s_2}))\mathrm{cum}(J_T^{j_2}(-\omega_{s_1}), J_T^{k_2}(-\omega_{s_2})) 
+\mathrm{cum}(J_T^{k_1}(\omega_{s_1}), J_T^{j_2}(-\omega_{s_2}))\mathrm{cum}(J_T^{j_2}(-\omega_{s_1}), J_T^{k_1}(\omega_{s_2}))  .
\end{align*}
In the above expression, the terms involving first cumulants vanish because $\mathrm{E}J_T^{j}(2\pi s/T)=0$ for $|s| = 1,2,\cdots$ and $j\in\{1,\cdots,D\}$. According to Corollary~\ref{cum_J_corollary}, the fourth cumulant satisfies
\begin{align*}
&\mathrm{cum}(J_T^{j_1}(\omega_{s_1}),J_T^{j_2}(-\omega_{s_1}), J_T^{k_1}(\omega_{s_2}),J_T^{k_2}(-\omega_{s_2})) 
=T^{-1}\left(f_{j_1 j_2 k_1 k_2}(\omega_{s_1},-\omega_{s_1},\omega_{s_2}) + o(1)\right).
\end{align*}
By Corollary~\ref{cov_J_corollary}, we get
\begin{align*}
&\mathrm{cum}(J_T^{j_1}(\omega_{s_1}), J_T^{k_1}(\omega_{s_2}))\mathrm{cum}(J_T^{j_2}(-\omega_{s_1}), J_T^{k_2}(-\omega_{s_2})) \\
&=\mathds{1}_{\{s_1+s_2=0\}}(f_{j_1 k_1}(\omega_{s_1}) + o(1))(f_{j_2 k_2}(-\omega_{s_1}) + o(1)) + \mathds{1}_{\{s_1+s_2\neq0\}}V_{T}^{j_1 k_1}(s_1,s_2)V_{T}^{j_2 k_2}(-s_1,-s_2) \\
&= \mathds{1}_{\{s_1+s_2\neq0\}}V_{T}^{j_1 k_1}(s_1,s_2)V_{T}^{j_2 k_2}(-s_1,-s_2)
\end{align*}
and
\begin{align*}
&\mathrm{cum}(J_T^{j_1}(\omega_{s_1}), J_T^{k_2}(-\omega_{s_2}))\mathrm{cum}(J_T^{j_2}(-\omega_{s_1}), J_T^{k_1}(\omega_{s_2})) \\
&=\mathds{1}_{\{s_1-s_2=0\}}(f_{j_1 k_2}(\omega_{s_1}) + o(1))(f_{j_2 k_1}(-\omega_{s_1}) + o(1)) + \mathds{1}_{\{s_1-s_2\neq0\}}V_{T}^{j_1 k_2}(s_1,-s_2)V_{T}^{j_2 k_1}(-s_1,s_2) . 
\end{align*}
All the $o(1)$ terms above are independent of $\omega_1$, $\omega_2$. 
\end{proof}
\end{proposition}

\begin{proposition}\label{cov_sep_prop}
Let
\begin{align*}
A_T(\Phi) = \frac{1}{T}\sum_{p=1}^{M_T} J_T^{H}(\omega_p)\Phi(\omega_p) J_T(\omega_p),
\end{align*}
where $\omega_p = 2\pi p/T$, $p=1,2,\cdots$ are the Fourier frequencies and $\Phi(\cdot) = \{\phi_{ij}(\cdot)\}_{i,j=1,\cdots,D}$ is a $D\times D$ matrix whose elements are functions. Then for any matrices of continuous and bounded (complex-valued) functions $\Phi_j =\{\phi_{jk}^{(r)}\}_{j,k=1,\cdots D}$, $r=1,2$, we have
\begin{enumerate}[(a)]
\item 
\begin{align*}
&\left|\mathrm{cum}(A_T(\Phi_1), A_T(\Phi_2))\right|
\leqslant\max_{r_1,s_1,r_2,s_2\in\{1,\cdots,D\}}\|\phi_{r_1 s_1}^{(1)}\|_{\infty}\|\phi_{r_2 s_2}^{(2)}\|_{\infty}(O(M_T^2/T^3) 
+O(M_T/T^2) ),
\end{align*}
where the big O terms do not depend on $j_1,j_2,k_1,k_2$ in view of Proposition~\ref{HPCum_bound_prop}.  
\item Moreover, when $M_T/T\rightarrow L\in(0,+\infty)$, we have
\begin{align*}
&\mathrm{cum}(\sqrt{T}A_T(\Phi_1), \sqrt{T}A_T(\Phi_2))
\rightarrow \\
&\frac{1}{(2\pi)^2}\int_{[0,2\pi L]^2} \sum_{j_1=1}^{D} \sum_{j_2=1}^{D}\sum_{k_1=1}^{D} \sum_{k_2=1}^{D}\phi_{j_2 j_1}^{(1)}(\omega_{1}) \phi_{k_2 k_1}^{(2)}(\omega_{2})f_{j_1 j_2 k_1 k_2}(\omega_{1},-\omega_{1},\omega_{2})\mathrm{d}\omega_1\mathrm{d}\omega_2 \\
&+ \frac{1}{2\pi}\int_{[0,2\pi L]} \sum_{j_1=1}^{D} \sum_{j_2=1}^{D}\sum_{k_1=1}^{D} \sum_{k_2=1}^{D} \phi_{j_2 j_1}^{(1)}(\omega) \phi_{k_2 k_1}^{(2)}(\omega) f_{j_1 k_2}(\omega)f_{j_2 k_1}(-\omega)\mathrm{d}\omega 
\end{align*}
when $T\rightarrow\infty$ .
\end{enumerate}
\begin{proof}
First, note that we can expand
\begin{align*}
A_T(\Phi) = \frac{1}{T}\sum_{p=1}^{M_T} \sum_{j_2=1}^{D} \sum_{j_1=1}^{D}J_T^{j_2}(-\omega_p)\phi_{j_2 j_1}(\omega_p) J_T^{j_1}(\omega_p)
= \frac{1}{T}\sum_{j_1=1}^{D} \sum_{j_2=1}^{D}\sum_{p=1}^{M_T} J_T^{j_1}(\omega_p) J_T^{j_2}(-\omega_p)\phi_{j_2 j_1}(\omega_p)
\end{align*}
and consequently,
\begin{align*}
&\mathrm{cum}(A_T(\Phi_1), A_T(\Phi_2)) \\
&=\sum_{j_1=1}^{D} \sum_{j_2=1}^{D}\sum_{k_1=1}^{D} \sum_{k_2=1}^{D} \left[T^{-2}\sum_{p_1=1}^{M_T}\sum_{p_2=1}^{M_T}\phi_{j_2 j_1}^{(1)}(\omega_{p_1}) \phi_{k_2 k_1}^{(2)}(\omega_{p_2})\mathrm{cum}\left(J_T^{j_1}(\omega_{p_1}) J_T^{j_2}(-\omega_{p_1}), J_T^{k_1}(\omega_{p_2}) J_T^{k_2}(-\omega_{p_2})\right)\right] .
\end{align*} 
By Proposition~\ref{cov_periodogram_prop} and direct calculations, for any $r_1,s_1,r_2,s_2\in\{1,\cdots,D\}$, we have
\begin{align*}
&T^{-2}\sum_{p_1=1}^{M_T}\sum_{p_2=1}^{M_T}\phi_{j_2 j_1}^{(1)}(\omega_{p_1}) \phi_{k_2 k_1}^{(2)}(\omega_{p_2})\mathrm{cum}\left(J_T^{j_1}(\omega_{p_1}) J_T^{j_2}(-\omega_{p_1}), J_T^{k_1}(\omega_{p_2}) J_T^{k_2}(-\omega_{p_2})\right) \\
&=T^{-2}\sum_{p_1=1}^{M_T}\sum_{p_2=1}^{M_T}\phi_{j_2 j_1}^{(1)}(\omega_{p_1}) \phi_{k_2 k_1}^{(2)}(\omega_{p_2})[T^{-1}(f_{j_1 j_2 k_1 k_2}(\omega_{p_1},-\omega_{p_1},\omega_{p_2}) + o(1))] 
\\
&+ T^{-2}\sum_{p_1=1}^{M_T}\sum_{p_2=1}^{M_T}\phi_{j_2 j_1}^{(1)}(\omega_{p_1}) \phi_{k_2 k_1}^{(2)}(\omega_{p_2}) V_{T}^{j_1 k_1}(p_1,p_2)V_{T}^{j_2 k_2}(-p_1,-p_2) \\
&+T^{-2}\sum_{p_1=1}^{M_T}\sum_{p_2=1}^{M_T}\mathds{1}_{\{p_1-p_2=0\}}\phi_{j_2 j_1}^{(1)}(\omega_{p_1}) \phi_{k_2 k_1}^{(2)}(\omega_{p_2})(f_{j_1 k_2}(\omega_{p_1}) + o(1))(f_{j_2 k_1}(-\omega_{p_1}) + o(1)) \\
&+T^{-2}\sum_{p_1=1}^{M_T}\sum_{p_2=1}^{M_T}\mathds{1}_{\{p_1-p_2\neq0\}}\phi_{j_2 j_1}^{(1)}(\omega_{p_1}) \phi_{k_2 k_1}^{(2)}(\omega_{p_2})V_{T}^{j_1 k_2}(p_1,-p_2) V_{T}^{j_2 k_1}(-p_1,p_2) \\
&=T^{-3}\sum_{p_1=1}^{M_T}\sum_{p_2=1}^{M_T}\phi_{j_2 j_1}^{(1)}(\omega_{p_1}) \phi_{k_2 k_1}^{(2)}(\omega_{p_2})f_{j_1 j_2 k_1 k_2}(\omega_{p_1},-\omega_{p_1},\omega_{p_2}) 
+ \|\phi_{j_2 j_1}^{(1)}\|_{\infty}\|\phi_{k_2 k_1}^{(2)}\|_{\infty}o(M_T^2/T^3 ) \\
&+T^{-2}\sum_{p_1=1}^{M_T}\sum_{p_2=1}^{M_T}\phi_{j_2 j_1}^{(1)}(\omega_{p_1}) \phi_{k_2 k_1}^{(2)}(\omega_{p_2}) V_{T}^{j_1 k_1}(p_1,p_2) V_{T}^{j_2 k_2}(-p_1,-p_2) \\
&+ T^{-2}\sum_{p=1}^{M_T}\phi_{j_2 j_1}^{(1)}(\omega_{p}) \phi_{k_2 k_1}^{(2)}(\omega_{p}) f_{j_1 k_2}(\omega_p)f_{j_2 k_1}(-\omega_p)
+ \|\phi_{j_2 j_1}^{(1)}\phi_{k_2 k_1}^{(2)}\|_{\infty}o(M_T/T^2 )\\
&+T^{-2}\sum_{\substack{p_1,p_2=1 \\ p_1\neq p_2}}^{M_T}\phi_{j_2 j_1}^{(1)}(\omega_{p_1}) \phi_{k_2 k_1}^{(2)}(\omega_{p_2}) V_{T}^{j_1 k_2}(p_1,-p_2) V_{T}^{j_2 k_1}(-p_1,p_2) .
\end{align*} 

For part (a), first notice that from Definition~\ref{HPcsd_def}, we get
\begin{align*}
T^{-3}\left|\sum_{p_1=1}^{M_T}\sum_{p_2=1}^{M_T}\phi_{j_2 j_1}^{(1)}(\omega_{p_1}) \phi_{k_2 k_1}^{(2)}(\omega_{p_2})f_{j_1 j_2 k_1 k_2}(\omega_{p_1},-\omega_{p_1},\omega_{p_2}) \right|
\leqslant \frac{M_T^2}{T^3} \|\phi_{j_2 j_1}^{(1)}\|_{\infty}\|\phi_{k_2 k_1}^{(2)}\|_{\infty} C_{j_1 j_2 k_1 k_2}^{\mathrm{red}}(\mathbb{R}^3) ,
\end{align*}
and
\begin{align*}
T^{-2}\left|\sum_{p=1}^{M_T}\phi_{j_2 j_1}^{(1)}(\omega_{p}) \phi_{k_2 k_1}^{(2)}(\omega_{p}) f_{j_1 k_2}(\omega_p)f_{j_2 k_1}(-\omega_p)\right|
\leqslant \frac{M_T}{T^2} \|\phi_{j_2 j_1}^{(1)}\|_{\infty}\|\phi_{k_2 k_1}^{(2)}\|_{\infty} C_{j_1 k_2}^{\mathrm{red}}(\mathbb{R}) C_{j_2 k_1}^{\mathrm{red}}(\mathbb{R}).
\end{align*}
From the definition of $V_T^{\cdot \cdot}(\cdot,\cdot)$ in Corollary~\ref{cov_J_corollary}, we get for any $j_1,j_2\in\{1,\cdots,D\}$ and any $p_1, p_2\in\mathbb{Z}$, 
\begin{align}\label{V2_bound}
|V_{T}^{j_1 j_2}(p_1,p_2)|
\leqslant \frac{C_{j_1 j_2}^{\mathrm{red}}(\mathbb{R})}{\pi} \frac{1}{|p_1 + p_2|} .
\end{align}
Therefore, by Corollary~\ref{indecomp_sum_corollary} (1), we have
\begin{align*}
&T^{-2}\left|\sum_{\substack{p_1,p_2=1 \\ p_1\neq p_2}}^{M_T}\phi_{j_2 j_1}^{(1)}(\omega_{p_1}) \phi_{k_2 k_1}^{(2)}(\omega_{p_2})V_{T}^{j_1 k_2}(p_1,-p_2) V_{T}^{j_2 k_1}(-p_1,p_2)\right|
\leqslant \|\phi_{j_2 j_1}^{(1)}\|_{\infty}\|\phi_{k_2 k_1}^{(2)}\|_{\infty}O\left(T^{-2}\sum_{\substack{p_1,p_2=1 \\ p_1\neq p_2}}^{M_T} \frac{1}{|p_1 - p_2|^2}\right) \\
&=\|\phi_{j_2 j_1}^{(1)}\|_{\infty}\|\phi_{k_2 k_1}^{(2)}\|_{\infty}O\left(M_T /T^{2}\right) ,
\end{align*}
and
\begin{align*}
T^{-2}\left|\sum_{p_1,p_2=1}^{M_T}\phi_{j_2 j_1}^{(1)}(\omega_{p_1}) \phi_{k_2 k_1}^{(2)}(\omega_{p_2})V_{T}^{j_1 k_2}(p_1,p_2) V_{T}^{j_2 k_2}(-p_1,-p_2) \right|
\leqslant \|\phi_{j_2 j_1}^{(1)}\|_{\infty}\|\phi_{k_2 k_1}^{(2)}\|_{\infty}O\left(\ln M_T/ T^{2}\right) .
\end{align*}
The above results together yield
\begin{align*}
&\left|\mathrm{cov}(A_T(\Phi_1), A_T(\Phi_2))\right|
\leqslant\max_{r_1,s_1,r_2,s_2\in\{1,\cdots,D\}}\|\phi_{r_1 s_1}^{(1)}\|_{\infty}\|\phi_{r_2 s_2}^{(2)}\|_{\infty}(O(M_T^2/T^3) 
+O(M_T/T^2) ),
\end{align*}
where the big O terms do not depend on $j_1,j_2,k_1,k_2$ in view of Proposition~\ref{HPCum_bound_prop}.

As for part (b),
\begin{align*}
&\mathrm{cum}(\sqrt{T}A_T(\Phi_1), \sqrt{T}A_T(\Phi_2)) 
=T\mathrm{cum}(A_T(\Phi_1), A_T(\Phi_2)) \\
&=\sum_{j_1=1}^{D} \sum_{j_2=1}^{D}\sum_{k_1=1}^{D} \sum_{k_2=1}^{D} \\
&=\left[T^{-2}\sum_{p_1=1}^{M_T}\sum_{p_2=1}^{M_T}\phi_{j_2 j_1}^{(1)}(\omega_{p_1}) \phi_{k_2 k_1}^{(2)}(\omega_{p_2})f_{j_1 j_2 k_1 k_2}(\omega_{p_1},-\omega_{p_1},\omega_{p_2}) 
+ \|\phi_{j_2 j_1}^{(1)}\|_{\infty}\|\phi_{k_2 k_1}^{(2)}\|_{\infty}o(M_T^2/T^2 ) \right. \\
&+T^{-1}\sum_{p_1=1}^{M_T}\sum_{p_2=1}^{M_T}\phi_{j_2 j_1}^{(1)}(\omega_{p_1}) \phi_{k_2 k_1}^{(2)}(\omega_{p_2}) V_{T}^{j_1 k_1}(p_1,p_2) V_{T}^{j_2 k_2}(-p_1,-p_2) \\
&+ T^{-1}\sum_{p=1}^{M_T}\phi_{j_2 j_1}^{(1)}(\omega_{p}) \phi_{k_2 k_1}^{(2)}(\omega_{p}) f_{j_1 k_2}(\omega_p)f_{j_2 k_1}(-\omega_p)
+ \|\phi_{j_2 j_1}^{(1)}\phi_{k_2 k_1}^{(2)}\|_{\infty}o(M_T/T )\\
&\left.+T^{-1}\sum_{\substack{p_1,p_2=1 \\ p_1\neq p_2}}^{M_T}\phi_{j_2 j_1}^{(1)}(\omega_{p_1}) \phi_{k_2 k_1}^{(2)}(\omega_{p_2}) V_{T}^{j_1 k_2}(p_1,-p_2) V_{T}^{j_2 k_1}(-p_1,p_2) \right].
\end{align*}
In view of the fact that the cumulant density functions are bounded and continuous (due to Proposition~\ref{HPCum_bound_prop}) and the proof of part (a), we have
\begin{align*}
&T^{-2}\sum_{p_1=1}^{M_T}\sum_{p_2=1}^{M_T}\phi_{j_2 j_1}^{(1)}(\omega_{p_1}) \phi_{k_2 k_1}^{(2)}(\omega_{p_2})f_{j_1 j_2 k_1 k_2}(\omega_{p_1},-\omega_{p_1},\omega_{p_2})  \\
&\rightarrow \frac{1}{(2\pi)^2}\int_{[0,2\pi L]^2} \phi_{j_2 j_1}^{(1)}(\omega_{1}) \phi_{k_2 k_1}^{(2)}(\omega_{2})f_{j_1 j_2 k_1 k_2}(\omega_{1},-\omega_{1},\omega_{2})\mathrm{d}\omega_1\mathrm{d}\omega_2 ,
\end{align*}
\begin{align*}
T^{-1}\sum_{p=1}^{M_T}\phi_{j_2 j_1}^{(1)}(\omega_{p}) \phi_{k_2 k_1}^{(2)}(\omega_{p}) f_{j_1 k_2}(\omega_p)f_{j_2 k_1}(-\omega_p)
\rightarrow \frac{1}{2\pi}\int_{[0,2\pi L]}\phi_{j_2 j_1}^{(1)}(\omega) \phi_{k_2 k_1}^{(2)}(\omega) f_{j_1 k_2}(\omega)f_{j_2 k_1}(-\omega)\mathrm{d}\omega  ,
\end{align*} 
and
\begin{align*}
T^{-1}\left|\sum_{p_1,p_2=1}^{M_T}\phi_{j_2 j_1}^{(1)}(\omega_{p_1}) \phi_{k_2 k_1}^{(2)}(\omega_{p_2})V_{T}^{j_1 k_2}(p_1,p_2) V_{T}^{j_2 k_2}(-p_1,-p_2) \right|
\leqslant \|\phi_{j_2 j_1}^{(1)}\|_{\infty}\|\phi_{k_2 k_1}^{(2)}\|_{\infty}O\left(\ln M_T/ T\right) = o(1)
\end{align*}
when $T\rightarrow\infty$. Moreover, we know from Corollary~\ref{cov_J_corollary} that
\begin{align*}
&T^{-1}\sum_{\substack{p_1,p_2=1 \\ p_1\neq p_2}}^{M_T}\phi_{j_2 j_1}^{(1)}(\omega_{p_1}) \phi_{k_2 k_1}^{(2)}(\omega_{p_2})V_{T}^{j_1 k_2}(p_1,-p_2) V_{T}^{j_2 k_1}(-p_1,p_2)
= \int_{\mathbb{R}^2} \varphi_{T}(u,v)\mathrm{d}C_{j_1 k_2}^{\mathrm{red}}(u)\mathrm{d}C_{j_2 k_1}^{\mathrm{red}}(v) ,
\end{align*}
where
\begin{align*}
&\varphi_{T}(u,v) = \left\{\frac{4}{T^3}\sum_{\substack{p_1,p_2=1 \\ p_1\neq p_2}}^{M_T}\phi_{j_2 j_1}^{(1)}(\omega_{p_1}) \phi_{k_2 k_1}^{(2)}(\omega_{p_2}) 
\left[\left(-\mathds{1}_{\{u \geqslant 0\}} + \mathds{1}_{\{u < 0\}}\right)\frac{1 - \exp(\mathrm{i}(\omega_{p_1} - \omega_{p_2})u)}{\mathrm{i}(\omega_{p_1} - \omega_{p_2})}\right]\right. \\
&\left. \left[\left(-\mathds{1}_{\{v \geqslant 0\}} + \mathds{1}_{\{v < 0\}}\right)\frac{1 - \exp(\mathrm{i}(\omega_{p_2} - \omega_{p_1})v)}{\mathrm{i}(\omega_{p_2} - \omega_{p_1})}\right] \right\}\mathds{1}_{\{(u,v)\in (-T,T)^2\}} .   
\end{align*}
Since $|1 - \exp(\mathrm{i}\omega u)|/|\omega|\leqslant |u|$ and $|1 - \exp(\mathrm{i}\omega u)|\leqslant 2$ for any $u, \omega\in\mathbb{R}$, we have
\begin{align*}
|\varphi_{T}(u,v)|
\leqslant 4\|\phi_{j_2 j_1}^{(1)}\|_{\infty}\|\phi_{k_2 k_1}^{(2)}\|_{\infty}\frac{M_T^2}{T^3}|u||v|\mathds{1}_{\{(u,v)\in (-T,T)^2\}} \rightarrow 0
\end{align*}
when $T\rightarrow \infty$, and
\begin{align*}
&|\varphi_{T}(u,v)|
\leqslant \frac{16}{T^3}\|\phi_{j_2 j_1}^{(1)}\|_{\infty}\|\phi_{k_2 k_1}^{(2)}\|_{\infty} \sum_{\substack{p_1,p_2=1 \\ p_1\neq p_2}}^{M_T}\frac{1}{|\omega_{p_1} - \omega_{p_2}|^2} \mathds{1}_{\{(u,v)\in (-T,T)^2\}} \\
&= \frac{4}{\pi^2}\|\phi_{j_2 j_1}^{(1)}\|_{\infty}\|\phi_{k_2 k_1}^{(2)}\|_{\infty}\frac{1}{T}\sum_{\substack{p_1,p_2=1 \\ p_1\neq p_2}}^{M_T} \frac{1}{|p_1 - p_2|^2} \mathds{1}_{\{(u,v)\in (-T,T)^2\}} 
= O(M_T/T) \mathds{1}_{\{(u,v)\in (-T,T)^2\}}
\end{align*}
because of Corollary~\ref{indecomp_sum_corollary} (1). Notice that $\mathds{1}_{\{(u,v)\in (-T,T)^2\}}$ is integrable with respect to $C_{j_1 k_2}^{\mathrm{red}}\times C_{j_2 k_1}^{\mathrm{red}}$. Therefore, an application of the dominated convergence theorem yields 
\begin{align*}
T^{-1}\left|\sum_{\substack{p_1,p_2=1 \\ p_1\neq p_2}}^{M_T}\phi_{j_2 j_1}^{(1)}(\omega_{p_1}) \phi_{k_2 k_1}^{(2)}(\omega_{p_2})V_{T}^{j_1 k_2}(p_1,-p_2) V_{T}^{j_2 k_1}(-p_1,p_2) \right|
\leqslant o(M_T^2/T^2)\|\phi_{j_2 j_1}^{(1)}\|_{\infty}\|\phi_{k_2 k_1}^{(2)}\|_{\infty} = o(1) .
\end{align*}

Hence,
\begin{align*}
&\mathrm{cov}(\sqrt{T}A_T(\Phi_1), \sqrt{T}A_T(\Phi_2))
\rightarrow \frac{1}{(2\pi)^2}\int_{[0,2\pi L]^2} \sum_{j_1=1}^{D} \sum_{j_2=1}^{D}\sum_{k_1=1}^{D} \sum_{k_2=1}^{D}\phi_{j_2 j_1}^{(1)}(\omega_{1}) \phi_{k_2 k_1}^{(2)}(\omega_{2})f_{j_1 j_2 k_1 k_2}(\omega_{1},-\omega_{1},\omega_{2})\mathrm{d}\omega_1\mathrm{d}\omega_2 \\
&+ \frac{1}{2\pi}\int_{[0,2\pi L]} \sum_{j_1=1}^{D} \sum_{j_2=1}^{D}\sum_{k_1=1}^{D} \sum_{k_2=1}^{D} \phi_{j_2 j_1}^{(1)}(\omega) \phi_{k_2 k_1}^{(2)}(\omega) f_{j_1 k_2}(\omega)f_{j_2 k_1}(-\omega)\mathrm{d}\omega .
\end{align*}

\end{proof}
\end{proposition}

\begin{proposition}\label{higher_order_Expectation_prop}
Let
\begin{align*}
A_T(\Phi) = \frac{1}{T}\sum_{p=1}^{M_T} J_T^{H}(\omega_p)\Phi(\omega_p) J_T(\omega_p),
\end{align*}
where $\omega_p = 2\pi p/T$, $p=1,2,\cdots$ are the Fourier frequencies and $\Phi(\cdot) = \{\phi_{ij}(\cdot)\}_{i,j=1,\cdots,D}$ is a $D\times D$ matrix whose elements are functions. Suppose $\Phi_j =\{\phi_{jk}^{(r)}\}_{j,k=1,\cdots D}$, $r=1,\cdots$ are any matrices of continuous and bounded  (complex-valued) functions, and $M_T/T = O(1)$.
\begin{enumerate}[(a)]
\item For $l\geqslant3$,
\begin{align*}
\left|\mathrm{cum}\left(T^{1/2}(A_T(\Phi_i) - \mathrm{E}A_T(\Phi_i);i=1,\cdots,l)\right)\right|
= \max\{1, M_T/T\}^{l} o(1).
\end{align*}

\item For sufficiently large $T$,
\begin{align*}
&\left|\mathrm{cum}\left(T^{1/2}(A_T(\Phi_i) - \mathrm{E}A_T(\Phi_i);i=1,\cdots,l)\right)\right| \\
&\leqslant  \max_{j_{11},\cdots,j_{2l} = 1,\cdots,D}\prod_{i=1}^l\|\phi_{j_{2i} j_{1i}}^{(i)}\|_{\infty}\max\{1, M_T/T\}^{l} C_1^l (2l-1)! ,
\end{align*}
holds for any $l\geqslant2$, where $C_1=2C_0^2 D^2$, $C_0$ is the constant in Proposition~\ref{HPCum_bound_prop}.

\item Moreover, for any matrix of continuous and bounded (complex-valued) functions $\Phi$, we also have
\begin{align*}
\left|\mathrm{E}\left[T^{1/2}(A_T(\Phi) - \mathrm{E}A_T(\Phi))\right]^l\right|
\leqslant \left(\max_{j_{1},j_{2} = 1,\cdots,D}\|\phi_{j_{2} j_{1}}\|_{\infty}\right)^l\max\{1, M_T/T\}^{l} C_1^l (2l)! .
\end{align*}
\end{enumerate}

\begin{proof}
We have by direct calculations that
\begin{align*}
&\left|\mathrm{cum}\left(T^{1/2}(A_T(\Phi_j) - \mathrm{E}A_T(\Phi_j);j=1,\cdots,l)\right)\right|
=T^{l/2}|\mathrm{cum}\left(A_T(\phi_j);j=1,\cdots,l\right)| \\
&=T^{-l/2}\left|\sum_{j_{21}=1}^D\sum_{j_{11}=1}^D\cdots\sum_{j_{2l}=1}^D\sum_{j_{1l}=1}^D\sum_{p_1=1}^{M_T}\cdots\sum_{p_l=1}^{M_T}\left(\prod_{i=1}^l \phi_{j_{2i} j_{1i}}^{(i)}(\omega_{p_i})\right)\mathrm{cum}(J_T^{j_{11}}(\omega_{p_1})J_T^{j_{21}}(-\omega_{p_1}),\cdots,J_T^{j_{1l}}(\omega_{p_l})J_T^{j_{2l}}(-\omega_{p_l}))\right| \\
&\leqslant T^{-l/2}\sum_{j_{21}=1}^D\sum_{j_{11}=1}^D\cdots\sum_{j_{2l}=1}^D\sum_{p_1=1}^{M_T}\cdots\sum_{p_l=1}^{M_T}\prod_{i=1}^l\left| \phi_{j_{2i} j_{1i}}^{(i)}(\omega_{p_i})\right|\left|\mathrm{cum}(J_T^{j_{11}}(\omega_{p_1})J_T^{j_{21}}(-\omega_{p_1}),\cdots,J_T^{j_{1l}}(\omega_{p_l})J_T^{j_{2l}}(-\omega_{p_l}))\right| \\
&\leqslant \max_{j_{11},\cdots,j_{2l} = 1,\cdots,D}\prod_{i=1}^l\|\phi_{j_{2i} j_{1i}}^{(i)}\|_{\infty} \\ 
&\sum_{j_{21}=1}^D\sum_{j_{11}=1}^D\cdots\sum_{j_{2l}=1}^D \left[T^{-l/2}\sum_{p_1=1}^{M_T}\cdots\sum_{p_l=1}^{M_T}\left|\mathrm{cum}(J_T^{j_{11}}(\omega_{p_1})J_T^{j_{21}}(-\omega_{p_1}),\cdots,J_T^{j_{1l}}(\omega_{p_l})J_T^{j_{2l}}(-\omega_{p_l}))\right| \right].
\end{align*}
By applying the product theorem for cumulants (see Theorem 2.3.2 of \citet{Brillinger2001}), we get
\begin{align*}
&\left|\mathrm{cum}(J_T^{j_{11}}(\omega_{p_1})J_T^{j_{21}}(-\omega_{p_1}),\cdots,J_T^{j_{1l}}(\omega_{p_l})J_T^{j_{2l}}(-\omega_{p_l}))\right| \\
&\leqslant \sum_{\textrm{ i.p.}} |\mathrm{cum}(J_T^{\cdot}(2\pi i/T); i\in Q_1)|\cdots|\mathrm{cum}(J_T^{\cdot}(2\pi i/T); i\in Q_r)|,
\end{align*}
where $J_T^{\cdot}(2\pi p_{k}/T) = J_T^{j_{1k}}(2\pi p_{k}/T)$, $J_T^{\cdot}(-2\pi p_{k}/T) = J_T^{j_{2k}}(-2\pi p_{k}/T)$, and the sum is over all indecomposable partitions $\{Q_1,\cdots,Q_r\}$ with $|Q_j|\geqslant2$ of the following table
\begin{align*}
\begin{matrix}
&p_1&-p_1 \\
&\vdots&\vdots \\
&p_l&-p_l
\end{matrix}.
\end{align*}
The reason we don't  need to consider partitions with singleton block is because $\mathrm{cum}(J_T^{j}(2\pi p/T)) = \mathrm{E}J_T^{j}(2\pi p/T)=0$ for $p\in\mathbb{Z}\setminus\{0\}$ and $j=1,\cdots,D$. This cardinality restriction also leads to $r\leqslant l$.
Corollary~\ref{cum_J_corollary} (b) and Proposition~\ref{HPCum_bound_prop} together ensure that there exists a constant $C_{0}>0$ such that for any block $Q$ of any indecomposable partition,
\begin{align*}
&|\mathrm{cum}(J_T^{\cdot}(2\pi i/T), i\in Q)|
\leqslant T^{1-|Q|/2}C_0^{|Q|}(|Q|-1)!\left(\mathds{1}_{\{\sum_{i\in Q}i = 0\}} + \frac{1}{|\sum_{i\in Q}i|}\mathds{1}_{\{\sum_{i\in Q}i \neq 0\}}\right)
\end{align*}
Therefore, 
\begin{align}
&T^{-l/2}\sum_{p_1=1}^{M_T}\cdots\sum_{p_l=1}^{M_T}\sum_{\textrm{ i.p.}} |\mathrm{cum}(J_T^{\cdot}(2\pi i/T); i\in Q_1)|\cdots|\mathrm{cum}(J_T^{\cdot}(2\pi i/T); i\in Q_r)| \notag\\
&\leqslant C_0^{2l}\sum_{\textrm{ i.p.}}\left(\prod_{j=1}^{r}(|Q_j| - 1)!\right)T^{-l/2+r-l}\sum_{p_1=1}^{M_T}\cdots\sum_{p_l=1}^{M_T} \prod_{j=1}^{r}  \left( \mathds{1}_{\{\sum_{i\in Q_j}i = 0\}} + \frac{1}{|\sum_{i\in Q_j}i|}\mathds{1}_{\{\sum_{i\in Q_j}i \neq 0\}}\right) . \label{ineq_he1}
\end{align}

To prove part (a), by direct expansion and applying Proposition~\ref{indecomp_sum_corollary}, when $r=1$,
\begin{align*}
&T^{-l/2+r-l}\sum_{p_1=1}^{M_T}\cdots\sum_{p_l=1}^{M_T}  \left( \mathds{1}_{\{\sum_{i\in Q_1}i = 0\}} + \frac{1}{|\sum_{i\in Q_1}i|}\mathds{1}_{\{\sum_{i\in Q_1}i \neq 0\}}\right) \\
&= T^{1-l/2} (M_T/T)^{l} 
\leqslant T^{1-l/2}\max\{1, (M_T/T)^{l}\};
\end{align*}
when $r=2$,
\begin{align*}
&T^{-l/2+r-l}\sum_{p_1=1}^{M_T}\cdots\sum_{p_l=1}^{M_T}  \prod_{j=1}^2\left( \mathds{1}_{\{\sum_{i\in Q_j}i = 0\}} + \frac{1}{|\sum_{i\in Q_j}i|}\mathds{1}_{\{\sum_{i\in Q_j}i \neq 0\}}\right) \\
&\leqslant T^{2-3l/2}(M_T^{l-1} + 4M_T^{l-1})
= 5T^{1 - l/2} (M_T/T)^{l-1}
\leqslant 5T^{1-l/2}\max\{1, (M_T/T)^{l}\};
\end{align*}
when $r\geqslant3$,
\begin{align*}
&\sum_{p_1=1}^{M_T}\cdots\sum_{p_l=1}^{M_T} \prod_{j=1}^{r}  \left( \mathds{1}_{\{\sum_{i\in Q_j}i = 0\}} + \frac{1}{|\sum_{i\in Q_j}i|}\mathds{1}_{\{\sum_{i\in Q_j}i \neq 0\}}\right) \\
&\leqslant T^{r - 3l/2} \left(M_T^{l-r+1} + {r \choose 2} 4M_T^{l-r+1} + \sum_{q=3}^{r}{r \choose q}4^{q-1}M_T^{l-r+1} (\log M_T)^{q-1} \right) \\
&\leqslant T^{1 - l/2} (1 + 4\log M_T)^{r} (M_T/T)^{l-r+1}
\leqslant o(1)\max\{1, (M_T/T)^{l}\} .
\end{align*}
Therefore, the right-hand side of \eqref{ineq_he1} is upperbounded by $\max\{1, (M_T/T)^{l}\}o(1)$ in view of the fact that $l\geqslant 3$ and $M_T/T = O(1)$. Part (a) can then be proven by noticing that the $o(1)$ term is independent of $j_{1k},j_{2k},k=1,\cdots,l$.

As for part (b), note that from the above derivation, for sufficiently large $T$,
\begin{align*}
&T^{-l/2+r-l}\sum_{p_1=1}^{M_T}\cdots\sum_{p_l=1}^{M_T}  \left( \mathds{1}_{\{\sum_{i\in Q_j}i = 0\}} + \frac{1}{|\sum_{i\in Q_j}i|}\mathds{1}_{\{\sum_{i\in Q_j}i \neq 0\}}\right) 
\leqslant \max\{1, (M_T/T)^{l}\}
\end{align*}
holds for any $l\geqslant 2$, and consequently,
\begin{align*}
&\left|\mathrm{cum}\left(T^{1/2}(A_T(\Phi_i) - \mathrm{E}A_T(\Phi_i);i=1,\cdots,l)\right)\right| \\
&\leqslant \max_{j_{11},\cdots,j_{2l} = 1,\cdots,D}\prod_{i=1}^l\|\phi_{j_{2i} j_{1i}}^{(i)}\|_{\infty}  \max\{1, (M_T/T)^{l}\}
\sum_{j_{21}=1}^D\sum_{j_{11}=1}^D\cdots\sum_{j_{2l}=1}^D C_0^{2l}\sum_{\textrm{ i.p.}}\left(\prod_{j=1}^{r}(|Q_j| - 1)!\right) \\
&= \max_{j_{11},\cdots,j_{2l} = 1,\cdots,D}\prod_{i=1}^l\|\phi_{j_{2i} j_{1i}}^{(i)}\|_{\infty}  \max\{1, (M_T/T)^{l}\}
 (C_0 D)^{2l}\sum_{\textrm{ i.p.}}\left(\prod_{j=1}^{r}(|Q_j| - 1)!\right).
\end{align*}
Note that 
\begin{align*}
\sum_{\textrm{ i.p.}}\prod_{j=1}^{r}(|Q_j|-1)!
\leqslant \sum_{\nu_1,\cdots,\nu_w}\prod_{v=1}^{w}(|\nu_v|-1)!
= \sum_{k=1}^{2l} B_{2l,k}(0!,1!,\cdots,(2l -k)!)=(2l)!,
\end{align*}
where $\nu_1,\cdots,\nu_w$ is a partition of $\{1,2,\cdots,2l\}$ and the sum at the right-hand side of the first inequality is over all partitions of $\{1,2,\cdots,2l\}$, $B_{n,k}$ is the $(n,k)$-th partial exponential Bell polynomial. The last two equalities are due to the combinatorial meaning of $B_{n,k}$ (see e.g. \citet[pp. 95--97]{BellPolynomial-encyc-maths}). The proof of part (b) is concluded by noting $2l\leqslant 2^l$ and letting $C_1 = 2C_0^2 D^2$.

As for part (c), we again use the product theorem for cumulants and part (b) to get
\begin{align*}
&\left|\mathrm{E}\left[T^{1/2}(A_T(\Phi) - \mathrm{E}A_T(\Phi))\right]^l\right|
= \left|\sum_{\mu_1,\cdots,\mu_m} \prod_{j=1}^m \mathrm{cum}_{|\mu_j|}\left(T^{1/2}(A_T(\Phi) - \mathrm{E}A_T(\Phi))\right)\right| \\
&\leqslant \sum_{\mu_1,\cdots,\mu_m} \prod_{j=1}^m \left|\mathrm{cum}_{|\mu_j|}\left(T^{1/2}(A_T(\phi) - \mathrm{E}A_T(\phi))\right)\right| \\
&\leqslant \max_{j_{11},\cdots,j_{2l} = 1,\cdots,D}\prod_{i=1}^l\|\phi_{j_{2i} j_{1i}}\|_{\infty} \max\{1, M_T/T\}^{l} C_1^l\sum_{\mu_1,\cdots,\mu_m} (2|\mu_1|-1)!\cdots(2|\mu_m|-1)! \\
&\leqslant \left(\max_{j_{1},j_{2} = 1,\cdots,D}\|\phi_{j_{2} j_{1}}\|_{\infty}\right)^l \max\{1, M_T/T\}^{l} C_1^l\sum_{\mu_1,\cdots,\mu_m} (2|\mu_1|-1)!\cdots(2|\mu_m|-1)! ,
\end{align*}
where $\mathrm{cum}_{n}(X)$ is the cumulant of $n$ copies of $X$, $\mu_1,\cdots,\mu_m$ is a partition of $\{1,2,\cdots,l\}$ and the sums are over all such partitions. Let $\tilde{\mu}_1,\cdots,\tilde{\mu}_m$ be a partition of $\{1,\cdots,2l\}$ constructed based on $\mu_1,\cdots,\mu_m$ by mapping $i$ to $(i,l+i)$ (i.e. if $i\in\mu_j$ for some $j$, then $i,l+i\in\tilde{\mu}_j$), we then get
\begin{align*}
\sum_{\mu_1,\cdots,\mu_m} (2|\mu_1|-1)!\cdots(2|\mu_m|-1)!
= \sum_{\tilde{\mu}_1,\cdots,\tilde{\mu}_m} (|\tilde{\mu}_1|-1)!\cdots(|\tilde{\mu}_m|-1)!
\leqslant (2l)!.
\end{align*}
\end{proof}
\end{proposition}

\begin{proposition}[Exponential inequality]\label{concentration_ineq_strong_prop}
Let
\begin{align*}
A_T(\Phi) = \frac{1}{T}\sum_{p=1}^{M_T} J_T^{H}(\omega_p)\Phi(\omega_p) J_T(\omega_p),
\end{align*}
where $\omega_p = 2\pi p/T$, $p=1,2,\cdots$ are the Fourier frequencies and $\Phi(\cdot) = \{\phi_{ij}(\cdot)\}_{i,j=1,\cdots,D}$ is a $D\times D$ matrix whose elements are continuous (complex-valued) functions. For any $\epsilon>0$ and sufficiently large $T$,
\begin{align*}
\mathrm{P}\left(|\sqrt{T}(A_T(\Phi) - \mathrm{E}A_T(\Phi))| \geqslant \epsilon\right) \leqslant 96\exp\left(-\frac{1}{2} \left(\frac{\epsilon}{C_1 \max_{j_{1},j_{2} = 1,\cdots,D}\|\phi_{j_{2} j_{1}}\|_{\infty}\max\{1, M_T/T\}} \right)^{\frac{1}{2}}\right),
\end{align*}
where $C_1>0$ is a constant introduced in Proposition~\ref{higher_order_Expectation_prop}.
\begin{proof}
The proof of the inequality is similar to the proof of Lemma 2.3 in \citet{Dahlhaus1988}. Let
\begin{align*}
S = T^{1/2}(A_T(\Phi) - \mathrm{E}A_T(\Phi)).
\end{align*}
Then for any $\epsilon>0$ and $t>0$,
\begin{align*}
&\mathrm{P}\left(|S| \geqslant \epsilon\right)
=\mathrm{P}\left(\exp\left[t(|S|^{\frac{1}{2}} - \epsilon^{\frac{1}{2}})\right]\geqslant 1\right)
\leqslant \exp(-t \epsilon^{\frac{1}{2}})\mathrm{E}\exp(t |S|^{\frac{1}{2}}) \\
&=\exp(-t \epsilon^{\frac{1}{2}})\sum_{k=0}^{\infty}\frac{t^k}{k!}\mathrm{E}|S|^{\frac{k}{2}}
\end{align*}
With the help of Proposition~\ref{higher_order_Expectation_prop} (c), it can be shown that for any positive integer $l$ and $j\in\{1,2,3\}$,
\begin{align*}
\mathrm{E}|S|^{\frac{4l+j}{2}} \leqslant \left(\mathrm{E}S^{2l+2}\right)^{\frac{4l+j}{4l+4}}
\leqslant \left(\max_{j_{1},j_{2} = 1,\cdots,D}\|\phi_{j_{2} j_{1}}\|_{\infty}\max\{1, M_T/T\}C_1\right)^{\frac{4l+j}{2}} (4l+4)! .
\end{align*}
Therefore,
\begin{align*}
&\mathrm{P}\left(|S| \geqslant \epsilon\right)
\leqslant \exp(-t \epsilon^{\frac{1}{2}})\sum_{k=0}^{\infty} t^k\left(\max_{j_{1},j_{2} = 1,\cdots,D}\|\phi_{j_{2} j_{1}}\|_{\infty}\max\{1, M_T/T\}C_1\right)^{\frac{k}{2}}(k+1)(k+2)(k+3) .
\end{align*}
Choosing $t =(4\max_{j_{1},j_{2} = 1,\cdots,D}\|\phi_{j_{2} j_{1}}\|_{\infty}\max\{1, M_T/T\}C_1)^{-1/2}$, we then have
\begin{align*}
&\mathrm{P}\left(|S| \geqslant \epsilon\right)
\leqslant \exp\left(-(4C_1)^{-\frac{1}{2}}\left(\max_{j_{1},j_{2} = 1,\cdots,D}\|\phi_{j_{2} j_{1}}\|_{\infty}\right)^{-\frac{1}{2}} \max\{1, M_T/T\}^{-\frac{1}{2}}  \epsilon^{\frac{1}{2}}\right)\sum_{k=0}^{\infty} 2^{-k}(k+1)(k+2)(k+3) \\
&\leqslant 96\exp\left(-\frac{1}{2} \left(\frac{\epsilon}{C_1\max_{j_{1},j_{2} = 1,\cdots,D}\|\phi_{j_{2} j_{1}}\|_{\infty}\max\{1, M_T/T\}} \right)^{\frac{1}{2}}\right) .
\end{align*}
\end{proof}
\end{proposition}

\begin{theorem}[Maximal inequality]\label{stochastic_conv_rate_thm}
Let $\Theta\subset \mathbb{R}^d$ be a compact set (equipped with the Euclidean norm). For each $\theta\in\Theta$, $\Phi_{\theta}(\cdot)=\{\phi_{\theta,ab}(\cdot)\}_{a,b=1,\cdots,D}$ is a Hermitian matrix of continuous, bounded functions satisfying $\tau_0:=\sup_{\theta\in\Theta}\max_{a,b=1,\cdots,D}\|\phi_{\theta,ab}\|_{\infty}< \infty$. Moreover, there exist constants $C,\beta>0$ such that for any $\theta_1,~\theta_2\in\Theta$, $\max_{a,b=1,\cdots,D}\|\phi_{\theta_1, ab} - \phi_{\theta_2, ab}\|_{\infty} \leqslant C\|\theta_1 - \theta_2\|^{\beta}$.


Suppose $M_T/T \rightarrow L < \infty$. Then there exists $c_1,c_2,c_3>0$ such that for any $\eta,\tau_0>0$ and any sufficiently large $T$, we have
\begin{align*}
\mathrm{P}\left(\sup_{\theta\in\Theta}|T^{1/2}(A_T(\Phi_{\theta}) - \mathrm{E}A_T(\Phi_{\theta}))| > \eta, \mathcal{B}_T\right) \leqslant \frac{c_1}{\tau_0^{c_3}}\exp(-c_2\sqrt{\eta/\tau_0}),
\end{align*}
where $A_T(\cdot)$ is defined in Proposition~\ref{concentration_ineq_strong_prop}, and $\mathcal{B}_T$ is a sequence of sets (independent of $\Theta$) satisfying $\mathrm{P}(\mathcal{B}_T)\rightarrow 1$.
\begin{proof}
The proof is similar to that of \citet[Theorem 2.9]{Dahlhaus_and_Polonik2009}. Let $S_T(\Phi_{\theta}) = T^{1/2}(A_T(\Phi_{\theta}) - \mathrm{E}A_T(\Phi_{\theta}))$. Then Proposition~\ref{concentration_ineq_strong_prop}, in conjunction with the assumption $M_T/T \rightarrow L$, show that there exists constant $\tau_1>0$ such that for sufficiently large $T$,
\begin{align*}
\mathrm{P}\left(|S_T(\Phi_{\theta})| \geqslant \epsilon\right) \leqslant 96\exp\left(-\tau_1\sqrt{\frac{\epsilon}{\max_{a,b=1,\cdots,D}\|\phi_{\theta_,ab}\|_{\infty}}} \right),
\end{align*}
We will rely on chaining technique to prove the maximal inequality. First, define
\begin{align*}
\mathcal{B}_T = \left\{A_T(\boldsymbol{\mathrm{I}}_{D}) + \mathrm{E}A_T(\boldsymbol{\mathrm{I}}_{D}) \leqslant \frac{5}{2}\int_{0}^{2\pi L} \sum_{a=1}^{D}|f_{aa}(x)|\mathrm{d}x\right\},
\end{align*}
where $\boldsymbol{\mathrm{I}}_{D}$ is $D\times D$ identity matrix. Note that $A_T(\boldsymbol{\mathrm{I}}_{D}) = \sum_{p=1}^{M_T} \boldsymbol{J}_T^{H}(\omega_p)\boldsymbol{J}_T(\omega_p)/T$ is non-negative. Then Propositions~\ref{concentration_ineq_strong_prop}, \ref{cov_J_prop} and \ref{conv_Riemann_sum_prop} jointly imply $\mathrm{P}(\mathcal{B}_T)\rightarrow 1$. Let $\{\delta_j\}_{j\geqslant0}$ be a sequence satisfying $\delta_j>0$, $\delta_{j+1} \leqslant \delta_j/2$ for any $j\geqslant0$. So we know $\{\delta_j\}_{j\geqslant0}$ is strictly decreasing and $\delta_j\leqslant \delta_0/2^j$. Denote by $\mathcal{N}(r,\Theta,\|\cdot\|)$ the covering number of $\Theta$ with radius $r$. For each $\delta_j$, choose a set $\mathcal{A}_j\subset\Theta$ of cardinality $|\mathcal{A}_j|=\mathcal{N}(\delta_j,\Theta,\|\cdot\|)$ consisting of centers of Euclidean balls with radius $\delta_j$ such that these balls form a cover of $\Theta$. The for any $\theta\in\Theta$, we can always select $\theta_j$ from $\mathcal{A}_j$ such that $\|\theta-\theta_j\|_{\infty}<\delta_j$. For any fixed $T$, under $\mathcal{B}_T$, we have
\begin{align*}
&|S_T(\Phi_{\theta} - \Phi_{\theta_j})|
=|S_T(\Phi_{\theta}) - S_T(\Phi_j)| \\
&\leqslant T^{1/2} |A_T(\Phi_{\theta}) - A_T(\Phi_{\theta_j})| + T^{1/2} \mathrm{E}|A_T(\Phi_{\theta}) - A_T(\Phi_{\theta_j})| \\
&\leqslant T^{1/2}(A_T(\boldsymbol{\mathrm{I}}_{D}) + \mathrm{E}A_T(\boldsymbol{\mathrm{I}}_{D}))\max_{a,b=1,\cdots,D}\|\phi_{\theta, ab} - \phi_{\theta_j, ab}\|_{\infty} \\
&\leqslant \left(T^{1/2} \frac{5}{2}\int_{0}^{2\pi L} \sum_{a=1}^{D}|f_{aa}(x)|\mathrm{d}x \right)CD\|\theta - \theta_j\|^{\beta} \\
& \leqslant CD \left(T^{1/2} \frac{5}{2}\int_{0}^{2\pi L} \sum_{a=1}^{D}|f_{aa}(x)|\mathrm{d}x \right)\delta_j^{\beta}
\leqslant CD \left(T^{1/2} \frac{5}{2}\int_{0}^{2\pi L} \sum_{a=1}^{D}|f_{aa}(x)|\mathrm{d}x \right)\delta_0^{\beta} 2^{-\beta j}
\end{align*}
for any $j$. Note that although the choice of $\theta_j$ may depend on $\theta$, the rightmost-hand side of above inequalities is independent of $\theta$ and is summable with respect to $j$. Therefore, we can write $S_T(\Phi_{\theta})$ as a telescoping series
\begin{align*}
S_T(\Phi_{\theta}) = S_T(\Phi_{\theta_0}) + \sum_{j=0}^{\infty} S_T(\Phi_{\theta_{j+1}} - \Phi_{\theta_j}).
\end{align*}
Notice that the choice of $\{\Phi_{\theta_j}\}_{j\geqslant0}$ described above implies $\|\theta_{j+1} - \theta_j\|< \delta_{j+1} + \delta_j$. Thus,
\begin{align*}
&\mathrm{P}\left(\sup_{\theta\in\Theta}|S_T(\Phi_{\theta})|>\eta, B_T\right)
\leqslant \mathrm{P}\left(\sup_{\theta\in\Theta}|S_T(\Phi_{\theta_0})|>\frac{\eta}{2}\right) + \mathrm{P}\left(\sum_{j=0}^{\infty}\sup_{\theta\in\Theta}\left| S_T(\Phi_{\theta_{j+1}} - \Phi_{\theta_j})\right|>\frac{\eta}{2}\right) \\
&\leqslant \mathrm{P}\left(\sup_{\theta\in\Theta}|S_T(\Phi_{\theta_0})|>\frac{\eta}{2}\right) + \sum_{j=0}^{\infty}\mathrm{P}\left(\sup_{\theta\in\Theta}\left| S_T(\Phi_{\theta_{j+1}} - \Phi_{\theta_j})\right|>\frac{\eta}{2^{j+2}}\right) \\
&\leqslant \mathrm{P}\left(\max_{\theta_0\in\mathcal{A}_0}|S_T(\Phi_{\theta_0})|>\frac{\eta}{2}\right) + \sum_{j=0}^{\infty}\mathrm{P}\left(\max_{\substack{\theta_j\in\mathcal{A}_j, \theta_{j+1}\in\mathcal{A}_{j+1} \\ \|\theta_{j+1} - \theta_j\|< \delta_{j+1} + \delta_j}}\left| S_T(\Phi_{\theta_{j+1}} - \Phi_{\theta_j})\right|>\frac{\eta}{2^{j+2}}\right) \\
&\leqslant  |\mathcal{A}_0|\max_{\theta_0\in\mathcal{A}_0}\mathrm{P}\left(|S_T(\Phi_{\theta_0})|>\frac{\eta}{2}\right) 
+ \sum_{j=0}^{\infty}|\mathcal{A}_j||\mathcal{A}_{j+1}|\max_{\substack{\theta_j\in\mathcal{A}_j, \theta_{j+1}\in\mathcal{A}_{j+1} \\ \|\theta_{j+1} - \theta_j\|< \delta_{j+1} + \delta_j}}\mathrm{P}\left(\left| S_T(\Phi_{\theta_{j+1}} - \Phi_{\theta_j})\right|>\frac{\eta}{2^{j+2}}\right) \\
&\leqslant 96\exp\left[\log|\mathcal{A}_0|  - \tau_1  \frac{\eta^\frac{1}{2}}{\sqrt{2}}\left(\max_{\theta_0\in\mathcal{A}_0}\max_{a,b=1,\cdots,D}\|\phi_{\theta_0, ab}\|_{\infty}\right)^{-\frac{1}{2}}\right] \\
&+ 96\sum_{j=0}^{\infty}\exp\left[\log|\mathcal{A}_j| + \log|\mathcal{A}_{j+1}| - \tau_1 \frac{\eta^{\frac{1}{2}}}{2^{\frac{j+2}{2}}}\left(\max_{\substack{\theta_j\in\mathcal{A}_j, \theta_{j+1}\in\mathcal{A}_{j+1} \\\|\theta_{j+1} - \theta_j\|< \delta_{j+1} + \delta_j}}\max_{a,b=1,\cdots,D}\|\phi_{\theta_{j+1}, ab} - \phi_{\theta_{j}, ab}\|_{\infty}\right)^{-\frac{1}{2}} \right] \\
&\leqslant 96\exp\left(\log|\mathcal{A}_{0}| - \tau_0^{-\frac{1}{2}}\tau_1  \frac{\eta^{\frac{1}{2}}}{\sqrt{2}}\right)
+ 96\sum_{j=0}^{\infty}\exp\left(\log|\mathcal{A}_{j}|+\log|\mathcal{A}_{j+1}| -  C^{-\frac{1}{2}}\tau_1\frac{\eta^{\frac{1}{2}}}{2^{\frac{j+2}{2}}} (\delta_j+\delta_{j+1})^{-\frac{\beta}{2}}\right) \\
&=I + II.
\end{align*}
Since there exists $\tilde{C}>0$ such that $\log|\mathcal{A}_j|\leqslant \log \tilde{C} - d\log\delta_j$ according to equation (37) of~\citet{covnum2}. It is easy to see that 
\begin{align*}
I  \leqslant 96\tilde{C}\delta_0^{-d} \exp(-2^{-\frac{1}{2}}\tau_1 \sqrt{\eta/\tau_0}).
\end{align*}
As for $II$, we need to design $\{\delta_j\}_{j\geqslant1}$ so that the desired result can be achieved. In the following derivation, we use $c$ to represent generic positive constants independent of $j$, $\eta$ and $\tau_0$. So if there are multiple $c$ in the same equation, they might be different. Let $\delta_0 = \tau_0^{\frac{1}{\beta}}$, $\delta_j = 2^{-\frac{4}{\beta}j}\delta_0$. For $\eta>0$,
\begin{align*}
&\exp\left(\log|\mathcal{A}_{j}|+\log|\mathcal{A}_{j+1}| -  C^{-\frac{1}{2}}\tau_1\frac{\eta^{\frac{1}{2}}}{2^{\frac{j+2}{2}}} (\delta_j+\delta_{j+1})^{-\frac{\beta}{2}}\right) \\
&\leqslant \exp\left(2\log|\mathcal{A}_{j+1}| -  C^{-\frac{1}{2}}\tau_1\frac{\eta^{\frac{1}{2}}}{2^{\frac{j+2}{2}}} (\frac{3}{2}\delta_j)^{-\frac{\beta}{2}}\right) 
\leqslant c\exp\left(-c\log\delta_{j+1} - c \frac{\eta^{\frac{1}{2}}}{2^{\frac{j}{2}}} \delta_j^{-\frac{\beta}{2}}\right) \\
&\leqslant \frac{c}{\tau_0^c}\exp\left(cj - c \sqrt{\eta/\tau_0} 2^{\frac{3}{2}j} \right)
\leqslant \frac{c}{\tau_0^c}\exp\left(- c \sqrt{\eta/\tau_0} 2^{\frac{j}{2}}\right)
\leqslant \frac{c}{\tau_0^c}\exp\left[- c \eta^{\frac{1}{2}}(j+2)\right].
\end{align*}
Therefore,
\begin{align*}
II \leqslant \frac{c}{\tau_0^c}\sum_{j=0}^{\infty}\exp[- c \sqrt{\eta/\tau_0}(j+2)]
=\frac{c}{\tau_0^c}\frac{\exp(- 2c \sqrt{\eta/\tau_0})}{1 - \exp(- c \sqrt{\eta/\tau_0}) }
\leqslant \frac{c}{\tau_0^c}\exp(- c \sqrt{\eta/\tau_0}) .
\end{align*}
Finally, it is clear that there exists $c_1,c_2, c_3>0$ such that
\begin{align*}
I+II \leqslant \frac{c_1}{\tau_0^{c_3}}\exp(-c_2\sqrt{\eta/\tau_0}).
\end{align*}
\end{proof}
\end{theorem}

\subsection{Proof of Theorem~\ref{Thm_CLT}}\label{proof_CLT_sect}
Let $S_T(\Phi_{\cdot}) = T^{1/2}(A_T(\Phi_{\cdot}) - \mathrm{E}A_T(\Phi_{\cdot}))$. It is worth noting that $S_T(\Phi_{\cdot})$ is real due to the fact that $\Phi_{\cdot}$ is Hermitian. According to Theorems 1.5.4 and 1.5.7 of \citet{van_der_vaart_and_wellner1996}, in order to show the weak convergence of $S_T(\Phi_{\cdot})$, we need to establish weak convergence of the finite-dimensional distributions and asymptotic equicontinuity in probability of $S_T(\Phi_{\cdot})$. Convergence of the finite-dimensional distributions can be seen from Propositions~\ref{cov_sep_prop} and \ref{higher_order_Expectation_prop}, and Lemma P4.5 of \citet{Brillinger2001}. The covariance structure can be obtained from Proposition~\ref{cov_sep_prop} (b). As for the asymptotic
equicontinuity, we need to show that for any $\eta,\epsilon>0$, there exists a $\tau>0$, such that
\begin{align*}
\limsup_{T\rightarrow\infty}\mathrm{P}\left(\sup_{\|\theta_1 - \theta_2\| < \tau}|S_T(\Phi_{\theta_1}) - S_T(\Phi_{\theta_2})|>\eta\right)<\epsilon .    
\end{align*}
To this end, we apply Proposition~\ref{stochastic_conv_rate_thm}. Let $\tilde{\Theta}_{\tau} = \{(\theta_1,\theta_2)\in\Theta\times\Theta:\|\theta_1 - \theta_2\| < \tau\}$ and $\tilde{\Phi}_{(\theta_1,\theta_2)} = \Phi_{\theta_1} - \Phi_{\theta_2}$  Note that $\tilde{\Phi}_{\cdot}$ is also Hermitian satisfying the assumptions in Theorem~\ref{stochastic_conv_rate_thm}. 
Therefore, we have
\begin{align*}
&\limsup_{T\rightarrow\infty}\mathrm{P}\left(\sup_{\|\theta_1 - \theta_2\| < \tau}|S_T(\Phi_{\theta_1}) - S_T(\Phi_{\theta_2})|>\eta\right) \\
&\leqslant \limsup_{T\rightarrow\infty}\mathrm{P}\left(\sup_{(\theta_1,\theta_2)\in\tilde{\Theta}_{\tau}}|S_T(\tilde{\Phi}_{(\theta_1, \theta_2)})|>\eta, \mathcal{B}_T\right) + \lim_{T\rightarrow\infty}\mathrm{P}(\mathcal{B}_T^c) \\
&\leqslant \frac{\tilde{c}_1}{\tau^{\tilde{c}_3}}\exp(-\tilde{c}_2\sqrt{\eta/\tau}),
\end{align*}
where $\tilde{c}_1,\tilde{c}_2,\tilde{c}_3>0$ are constants. Apparently, we can choose sufficiently small $\tau$ such that the right-hand side of the inequality is smaller than $\epsilon$.

\subsection{Proof of Theorem~\ref{Thm_McP} and related results}
\subsubsection{Proof of Theorem~\ref{Thm_McP}}\label{proof_Thm_McP_sect}
Similar to \citet{McElroy_and_Politis2025}, let $c_{2l,T} = M_T^{-1}\sum_{s=1}^{M_T} K_{\delta_T}(\omega_s)\omega_s^{2l}$, $l=0,1,2$, and $g_T(\omega) = (c_{4,T} - c_{2,T}\omega^2)K_{\delta_T}(\omega)/(c_{4,T}c_{0,T} - c_{2,T}^2)$. Then we can write 
\begin{align*}
\hat{\phi}_{uv} = \frac{1}{M_T}\sum_{s=1}^{M_T}g_T(\omega_s)\mathrm{Re}J_{T}^{u}(\omega_s)J_{T}^{v}(-\omega_s)
=\frac{\xi_{uv} + \bar{\xi}_{uv}}{2}
=\frac{\xi_{uv} + \xi_{vu}}{2}.
\end{align*}
where $\xi_{uv}=(1/M_T)\sum_{s=1}^{M_T}g_T(\omega_s)J_{T}^{u}(\omega_s)J_{T}^{v}(-\omega_s)$, and $\bar{\xi}_{uv}$ is the complex conjugate of $\xi_{uv}$. To further analyze $\hat{\phi}_{uv}$, we need to derive several properties of $g_T$. An obvious one is $\sum_{s=1}^{M_T}g_T(\omega_s)=1$. Let $H_{2l,T} = M_T^{-1}\sum_{s=1}^{M_T} K(s/M_T)(s/M_T)^{2l}$, $l=0,1,2$. Then we have $c_{2l,T} = (2\pi\delta_T)^{2l-1}H_{2l,T}$ and $H_{2l,T}\rightarrow H_{2l}:=\int_0^1 K(x)x^{2l}\mathrm{d}x$ when $T\rightarrow\infty$. Moreover, $H_{4,T}H_{0,T} - H_{2,T}^2>0$ and $H_{4}H_{0} - H_{2}^2>0$ due to the Cauchy-Schwarz inequality. Consequently, there exists $C_1=2(H_4 + H_2)\|K\|_{\infty}/(H_4H_0 - H_2^2)$ such that
\begin{align}\label{eq_bound_gT}
\sup_{\omega\in[0,2\pi\delta_T]}|g_T(\omega)|
\leqslant \frac{c_{4,T} + c_{2,T}(2\pi\delta_T)^2}{c_{4,T}c_{0,T} - c_{2,T}^2}\frac{1}{2\pi\delta_T}\|K\|_{\infty}
= \frac{H_{4,T} + H_{2,T}}{H_{4,T}H_{0,T} - H_{2,T}^2}\|K\|_{\infty}
\leqslant C_1
\end{align}
holds for sufficiently large $T$.

We will now prove part (a) of the theorem by analyzing the first- and second-order properties of $\{\hat{\phi}_{uv}\}_{u,v=1,\cdots,D}$. Notice that $f_{uv}(0)=f_{vu}(0)$, we have by Corollary~\ref{cov_J_corollary} that
\begin{align*}
&|\mathrm{E}\hat{\phi}_{uv} - f_{uv}(0)|
=\left|\frac{1}{M_T}\sum_{s=1}^{M_T}g_T(\omega_s)\left(\frac{f_{uv}(\omega_s) + f_{vu}(-\omega_s)}{2} + o(1)\right) - f_{uv}(0)\right| \\
&\leqslant \frac{C_1}{2}\left(\sup_{\omega\in[0,2\pi\delta_T]}|f_{uv}(\omega) - f_{uv}(0)| + \sup_{\omega\in[0,2\pi\delta_T]}|f_{vu}(-\omega) - f_{uv}(0)|\right) + o(1)
=o(1)
\end{align*}
as both $f_{uv}$ and $f_{vu}$ are uniformly continuous and $\delta_T\rightarrow0$. To prove the convergence in probability, it suffices to show 
\begin{align*}
\mathrm{var}(\hat{\phi}_{uv})
=\frac{1}{4}\mathrm{cum}(\xi_{uv}, \xi_{uv})
+\frac{1}{4}\mathrm{cum}(\bar{\xi}_{uv}, \bar{\xi}_{uv}) 
+\frac{1}{2}\mathrm{cum}(\xi_{uv}, \bar{\xi}_{uv})\rightarrow 0
\end{align*}
when $T\rightarrow\infty$ for any $u,v=1,\cdots,D$. In view of Equation~\eqref{eq_bound_gT} and the fact that $\xi_{uv}=(T/M_T)(1/T)\sum_{s=1}^{M_T}g_T(\omega_s)J_{T}^{u}(\omega_s)J_{T}^{v}(-\omega_s)$, we deduce
\begin{align*}
0\leqslant\mathrm{var}(\hat{\phi}_{uv})
\leqslant(T/M_T)^2 (O(M_T^2/T^3) 
+O(M_T/T^2) ) = O(1/T) + O(1/M_T) = o(1)
\end{align*}
in a manner similar to the proof of Proposition~\ref{cov_sep_prop} (a). 

To prove part (b) of the theorem, we first notice that the subprocesses of the underlying multivariate stationary Hawkes process are jointly independent due to the diagonality of the interactions matrix. Therefore, for any $1\leqslant u<v\leqslant D$,
\begin{align*}
\mathrm{E}\xi_{uv} = \frac{1}{M_T}\sum_{s=1}^{M_T}g_T(\omega_s)\mathrm{E}[J_{T}^{u}(\omega_s)J_{T}^{v}(-\omega_s)]
=\frac{1}{M_T}\sum_{s=1}^{M_T}g_T(\omega_s)\mathrm{E}[J_{T}^{u}(\omega_s)]\mathrm{E}[J_{T}^{v}(-\omega_s)]
=0.
\end{align*}
This implies that $\mathrm{E}\sqrt{M_T}\hat{\phi}_{uv} = 0$, $1\leqslant u<v\leqslant D$. We now study the second-order properties. Let $1\leqslant u_1<v_1\leqslant D$, $1\leqslant u_2<v_2\leqslant D$. If the sets $\{u_1,v_1\}$ and $\{u_2,v_2\}$ are not identical, then we have
\begin{align*}
&\mathrm{cum}(\sqrt{M_T}\xi_{u_1v_1}, \sqrt{M_T}\xi_{u_2v_2}) \\
&= \frac{1}{M_T}\sum_{s_1=1}^{M_T}\sum_{s_2=1}^{M_T}g_T(\omega_{s_1})g_T(\omega_{s_2})\mathrm{cum}(J_{T}^{u_1}(\omega_{s_1})J_{T}^{v_1}(-\omega_{s_1}), J_{T}^{u_2}(\omega_{s_2})J_{T}^{v_2}(-\omega_{s_2}))
=0,
\end{align*}
due to the independence of subprocesses, and $\mathrm{cum}(\sqrt{M_T}\xi_{u_1v_1}, \sqrt{M_T}\bar{\xi}_{u_2v_2})=0$ similarly. Therefore, $\mathrm{cov}(\sqrt{M_T}\hat{\phi}_{u_1v_1}, \sqrt{M_T}\hat{\phi}_{u_2v_2})=0$ when $\{u_1,v_1\} \neq \{u_2,v_2\}$. We now consider the case when $u_1=u_2=u$ and $v_1=v_2=v$. Notice that 
\begin{align*}
\frac{1}{M_T}\sum_{s=1}^{M_T}g_T^2(\omega_{s})
=\frac{1}{M_T}\sum_{s=1}^{M_T} \left(\frac{(H_{4,T} - H_{2,T}(s/M_T)^2)K(s/M_T)}{H_{4,T}H_{0,T} - H_{2,T}^2}\right)^2 
\rightarrow \int_{0}^{1} \left(\frac{(H_4 - H_2 x^2)K(x)}{H_4H_0 - H_2^2}\right)^2\mathrm{d}x
\end{align*}
and
\begin{align*}
\frac{1}{M_T}\sum_{s=1}^{M_T}g_T^2(\omega_{s}) f_{u u}(\pm\omega_s)f_{v v}(\mp\omega_s)
\rightarrow f_{u u}(0)f_{v v}(0)\int_{0}^{1} \left(\frac{(H_4 - H_2 x^2)K(x)}{H_4H_0 - H_2^2}\right)^2\mathrm{d}x,
\end{align*} 
then similar to the proof of Proposition~\ref{cov_sep_prop} (b), we get
\begin{align*}
&\mathrm{cum}(\sqrt{M_T}\xi_{uv}, \sqrt{M_T}\bar{\xi}_{uv}) \\
&= \frac{1}{M_T}\sum_{s_1=1}^{M_T}\sum_{s_2=1}^{M_T}g_T(\omega_{s_1})g_T(\omega_{s_2})\mathrm{cum}(J_{T}^{u}(\omega_{s_1})J_{T}^{v}(-\omega_{s_1}), J_{T}^{u}(-\omega_{s_2})J_{T}^{v}(\omega_{s_2})) \\
&= \frac{1}{M_T}\sum_{s_1=1}^{M_T}\sum_{s_2=1}^{M_T}g_T(\omega_{s_1})g_T(\omega_{s_2})\mathrm{cum}(J_{T}^{u}(\omega_{s_1}), J_{T}^{u}(-\omega_{s_2}))\mathrm{cum}(J_{T}^{v}(-\omega_{s_1}), J_{T}^{v}(\omega_{s_2})) \\
&\rightarrow f_{uu}(0)f_{vv}(0)\int_{0}^{1} \left(\frac{(H_4 - H_2 x^2)K(x)}{H_4H_0 - H_2^2}\right)^2\mathrm{d}x
\end{align*}
and
\begin{align*}
&\mathrm{cum}(\sqrt{M_T}\xi_{uv}, \sqrt{M_T}\xi_{uv}) \\
&= \frac{1}{M_T}\sum_{s_1=1}^{M_T}\sum_{s_2=1}^{M_T}g_T(\omega_{s_1})g_T(\omega_{s_2})\mathrm{cum}(J_{T}^{u}(\omega_{s_1})J_{T}^{v}(-\omega_{s_1}), J_{T}^{u}(\omega_{s_2})J_{T}^{v}(-\omega_{s_2})) \\
&= \frac{1}{M_T}\sum_{s_1=1}^{M_T}\sum_{s_2=1}^{M_T}g_T(\omega_{s_1})g_T(\omega_{s_2})\mathrm{cum}(J_{T}^{u}(\omega_{s_1}), J_{T}^{u}(\omega_{s_2}))\mathrm{cum}(J_{T}^{v}(-\omega_{s_1}), J_{T}^{v}(-\omega_{s_2})) \\
&=o(1) .
\end{align*}
These results lead to 
\begin{align*}
\mathrm{var}(\sqrt{M_T}\hat{\phi}_{uv})
\rightarrow \frac{1}{2}f_{uu}(0)f_{vv}(0)\int_{0}^{1} \left(\frac{(H_4 - H_2 x^2)K(x)}{H_4H_0 - H_2^2}\right)^2\mathrm{d}x 
\end{align*}
for any $1\leqslant u<v\leqslant D$. As for higher order mixed cumulant, we have for any $1\leqslant u_k<v_k\leqslant D$, $k=1,\cdots,l$, $l\geqslant3$,
\begin{align*}
\mathrm{cum}(\sqrt{M_T}\hat{\phi}_{u_1v_1},\cdots,\sqrt{M_T}\hat{\phi}_{u_lv_l}) = o(1)
\end{align*}
when $T\rightarrow\infty$ and the proof can be carried out in a manner resembling that of Proposition~\ref{higher_order_Expectation_prop}~(a) in view of Equation~\eqref{ineq_he1}. The asymptotic normality can be proved by combining the above results regarding cumulants with Lemma P4.5 of \citet{Brillinger2001}.

\subsubsection{Unique solution of the matrix equation}\label{proof_uniq_mat_eq_sect}
\begin{proposition}\label{uniq_mat_eq_proposition}
Let $\boldsymbol{\nu}$ be an element-wise nonnegative matrix with spectral radius smaller than 1 satisfying 
\begin{align*}
(\boldsymbol{\mathrm{I}}_D - \boldsymbol{\nu})^{-1}\boldsymbol{\Lambda}[(\boldsymbol{\mathrm{I}}_D - \boldsymbol{\nu})^{-1}]^{\top} = \boldsymbol{D},
\end{align*}
where $\boldsymbol{\Lambda}$ and $\boldsymbol{D}$ are diagonal matrices with positive diagonal elements, and $[\boldsymbol{\Lambda}]_{ii}/[\boldsymbol{D}]_{ii} \leqslant 1$ for $i=1,\cdots,D$. Then $\boldsymbol{\nu}$ has to be a diagonal matrix.
\begin{proof}
The given equation is equivalent to $\boldsymbol{Q}\boldsymbol{Q}^{\top} = \boldsymbol{\mathrm{I}}_D$, where $\boldsymbol{Q}=\boldsymbol{D}^{-\frac{1}{2}}(\boldsymbol{\mathrm{I}}_D - \boldsymbol{\nu})^{-1}\boldsymbol{\Lambda}^{\frac{1}{2}}$. This indicates that $\boldsymbol{Q}$ is an orthogonal matrix. Since $(\boldsymbol{\mathrm{I}}_D - \boldsymbol{\nu})^{-1}=\sum_{n\geqslant0}\boldsymbol{\nu}^n$ is a nonnegative matrix, $\boldsymbol{Q}$ consequently should be nonnegative. It is known that a nonnegative orthogonal matrix has to be a permutation matrix (i.e. a matrix having exactly one entry of 1 in each row and each column with all other entries 0). If $\boldsymbol{Q}$ is a permutation matrix other than the identity matrix, we observe that $\boldsymbol{\nu} = \boldsymbol{\mathrm{I}}_D - \boldsymbol{\Lambda}^{\frac{1}{2}}\boldsymbol{Q}^{\top}\boldsymbol{D}^{-\frac{1}{2}}$ has at least one negative off-diagonal entry, contradicting the nonnegativity of $\boldsymbol{\nu}$. Therefore, we have $\boldsymbol{Q}=\boldsymbol{\mathrm{I}}_D$ and $\boldsymbol{\nu} = \boldsymbol{\mathrm{I}}_D - \boldsymbol{\Lambda}^{\frac{1}{2}}\boldsymbol{D}^{-\frac{1}{2}}$ is a nonnegative diagonal matrix.
\end{proof}
\end{proposition}

\section{Indecomposable partitions}\label{indecomposable_partition_sect}

    This appendix formalizes the ``indecomposable” (equivalently, ``hooked”) condition used in Appendix~\ref{app_sect_weak_convergence} to glue the signs in the constraints
\(\sum_{p_i\in Q_j} p_i-\sum_{-p_i\in Q_j} p_i=0\).
We recast a partition \(\{Q_1,\dots,Q_r\}\) of the two-column table \(\{p_i,-p_i\}_{i=1}^l\) as a labelled multigraph on the blocks \(Q_j\) and observe that the constraint matrix \(A\) is, up to column sign choices, an oriented incidence matrix of this graph. This viewpoint yields a clean rank formula and clarifies why indecomposability corresponds to connectivity cf. \citet[Theorem 2.3.2]{Brillinger2001}.

If the associated graph has \(c\) connected components (loops ignored), then \(\mathrm{rank}(A)=r-c\); in particular, an indecomposable partition (connected graph) gives \(\mathrm{rank}(A)=r-1\).

\noindent Fix an integer $l\ge 1$. Consider the two-column table whose $i$th row is
\[
R_i=\{ p_i, -p_i \}, \qquad i=1,\dots,l,
\]
and let $\{Q_1,\dots,Q_r\}$ be a partition of 
\begin{align*}
\begin{matrix}
&p_1&-p_1 \\
&\vdots&\vdots \\
&p_l&-p_l
\end{matrix}
\end{align*}
into $r\ge 1$ nonempty blocks. Following the main text, each block $Q_j$ induces a linear constraint
\[
P_j:\quad \sum_{p_i\in Q_j} p_i \;-\!\!\! \sum_{-p_i\in Q_j} p_i \;=\; 0 ,
\]
and we collect these constraints in a matrix $A\in\mathbb{R}^{r\times l}$ via
\[
A_{j,i} \;=\;
\begin{cases}
\phantom{-}1, & p_i\in Q_j,\\
-1, & -p_i\in Q_j,\\
\phantom{-}0, & \text{otherwise}.
\end{cases}
\]
Thus column $i$ has a $+1$ at the row of the block holding $p_i$ and a $-1$ at the row of the block holding $-p_i$. If $p_i$ and $-p_i$ lie in the same block, the $i$th column is the zero column.

We can encode the partition by a multigraph $G$ on the vertex set $\{1,\dots,r\}$. Specifically,
for each $i\in\{1,\dots,l\}$, if $p_i\in Q_a$ and $-p_i\in Q_b$ with $a\ne b$, add an (undirected) edge between $a$ and $b$ labelled $i$, and if $p_i,-p_i\in Q_a$, add a self-loop at vertex $a$ labelled $i$. With an arbitrary orientation chosen (recall Boris' observation about multiplying by $-1$) for each nonloop edge, $A$ is  an oriented incidence matrix of $G$. Loops correspond to zero columns and do not affect rank or connectivity.

\begin{definition}[Indecomposable partition]
We say $\{Q_1,\dots,Q_r\}$ is \emph{indecomposable} if there is no strict subfamily $\{Q_{j_1},\dots,Q_{j_s}\}$, $1\le s<r$, whose union equals a union of whole rows $\bigcup_{i\in S} R_i$ for some $S\subset\{1,\dots,l\}$. 
\end{definition}

\noindent A nonloop edge labelled $i$ says: $p_i$ and $-p_i$ live in different blocks and therefore column $i$ of $A$ is $e_a-e_b$ (up to sign).
A loop labelled $i$ means $p_i$ and $-p_i$ live in the same block and therefore column $i$ of $A$ is the zero vector.

Lemma~\ref{lem:indec-connected} identifies indecomposability with graph connectivity; Lemma~\ref{lem:kernel} describes the kernel of \(A^\top\); Theorem~\ref{thm:rank} then gives \(\mathrm{rank}(A)=r-c\).

\begin{lemma}[Indecomposable $\iff$ connected]\label{lem:indec-connected}
Let $G$ be the graph defined above (ignore loops when talking about connectivity). Then the partition is indecomposable if and only if $G$ is connected.
\end{lemma}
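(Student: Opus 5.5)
We prove both directions by contraposition, working with the correspondence between subfamilies of blocks and vertex subsets of $G$. The key observation is this: for a vertex subset $V'\subset\{1,\dots,r\}$, the union $\bigcup_{j\in V'}Q_j$ is a union of whole rows exactly when no row $R_i$ is split between $V'$ and its complement. Row $R_i$ contributes either a loop (both entries in one block, so it is never split) or an edge $\{a,b\}$ labelled $i$, where $p_i\in Q_a$ and $-p_i\in Q_b$. Such a row lies entirely inside $\bigcup_{j\in V'}Q_j$ precisely when both $a,b\in V'$. It is split precisely when the edge labelled $i$ crosses the cut $(V',V'^c)$.

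\emph{Disconnected implies decomposable.} Suppose $G$, with loops ignored, is not connected, and let $V'$ be the vertex set of one connected component. Then $1\le |V'|<r$, and no non-loop edge leaves $V'$. Every element of $\bigcup_{j\in V'}Q_j$ is $\pm p_i$ for some $i$. If the row is a loop, its partner entry is in the same block. Otherwise the edge labelled $i$ has one endpoint in $V'$, and hence both, so the partner entry also lies in a block of $V'$. With $S=\{i: p_i\in\bigcup_{j\in V'}Q_j\}$, we therefore get $\bigcup_{j\in V'}Q_j=\bigcup_{i\in S}R_i$. The strict subfamily $\{Q_j\}_{j\in V'}$ witnesses decomposability.

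\emph{Decomposable implies disconnected.} Suppose a strict subfamily $\{Q_j\}_{j\in V'}$ with $1\le|V'|<r$ has union equal to $\bigcup_{i\in S}R_i$. Take any non-loop edge labelled $i$ with endpoints $a\ne b$. If $a\in V'$, then $p_i\in\bigcup_{j\in V'}Q_j$, so $i\in S$. Hence $-p_i$ lies in this union, and since the blocks are disjoint, $b\in V'$. The same argument applies with $a$ and $b$ swapped. So no edge crosses from $V'$ to $V'^c$. Both sets are nonempty, because the blocks are nonempty and $s<r$, so $G$ is disconnected.

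The only delicate point is keeping the bookkeeping consistent. Each entry $\pm p_i$ of the table is a distinct position, even when numerical values coincide, so the partition acts on positions and disjointness of blocks applies. Loops must be handled separately, since they never affect either direction. Once positions are treated as labelled objects, both directions are elementary.
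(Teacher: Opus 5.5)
Your proof is correct and follows the paper's argument: in one direction a proper connected component gives a strict subfamily whose union is a union of whole rows, and in the other a strict subfamily that is a union of rows admits no edge to its complement. You are more explicit than the paper about loops, about constructing the row set $S$, and about nonemptiness of both sides of the cut, but the route is the same.
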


\begin{proof}
\emph{($\Rightarrow$)} Suppose, towards a contradiction, that $G$ is disconnected. Take the vertex set $V_1$ of any nonempty proper connected component. Because edges never cross between components, every row $R_i=\{p_i,-p_i\}$ is either entirely supported on blocks indexed by $V_1$ or entirely on blocks indexed by the complement $V_1^c$. Therefore
\[
\bigcup_{j\in V_1} Q_j \;=\; \bigcup_{i: R_i\subset \cup_{j\in V_1}Q_j} R_i
\]
is a union of whole rows, contradicting indecomposability.\\

\emph{($\Leftarrow$)} Conversely, suppose some strict subfamily $\{Q_{j_1},\dots,Q_{j_s}\}$ equals a union of whole rows. Then no row $R_i$ is split across this subfamily and its complement; equivalently, there is no edge in $G$ joining $\{j_1,\dots,j_s\}$ to its complement. Hence $G$ is disconnected.
\end{proof}

\begin{lemma}[Kernel description]\label{lem:kernel}
Let $c$ be the number of connected components of $G$ (again, loops ignored). Then
\[
\ker(A^\top)=\left\{x\in\mathbb{R}^r:\ x \text{ is constant on each connected component of }G\right\},
\]
so $\dim\ker(A^\top)=c$.
\end{lemma}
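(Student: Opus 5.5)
The plan is to establish the two inclusions directly from the structure of $A$ as an oriented incidence matrix of $G$, and then to read off the dimension. For $x\in\mathbb{R}^r$, the $i$th entry of $A^\top x$ is $\sum_j A_{j,i}x_j$. Column $i$ of $A$ is either the zero vector, when the edge labelled $i$ is a loop, or $\pm(e_a-e_b)$, when $p_i\in Q_a$ and $-p_i\in Q_b$ with $a\neq b$. In the first case $(A^\top x)_i=0$ holds automatically. In the second case $(A^\top x)_i=\pm(x_a-x_b)$. Consequently
\[
A^\top x=0 \iff x_a=x_b \text{ for every nonloop edge } \{a,b\} \text{ of } G.
\]
Both inclusions will follow from this equivalence.

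\emph{Constant on components $\Rightarrow$ kernel.} Suppose $x$ is constant on each connected component of $G$. The two endpoints $a,b$ of any nonloop edge lie in the same component, so $x_a=x_b$, and therefore $A^\top x=0$.

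\emph{Kernel $\Rightarrow$ constant on components.} Suppose $A^\top x=0$, and take two vertices $u,w$ in the same component. There is then a path $u=v_0,v_1,\dots,v_k=w$ of nonloop edges. Applying $x_{v_{t-1}}=x_{v_t}$ along each edge of the path and chaining the equalities gives $x_u=x_w$. Hence $x$ is constant on each component.

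For the dimension, let $V_1,\dots,V_c$ be the vertex sets of the components; these partition $\{1,\dots,r\}$. The indicator vectors $\mathds{1}_{V_1},\dots,\mathds{1}_{V_c}$ have disjoint nonempty supports, so they are linearly independent. They also span the space of vectors that are constant on each component, because any such vector equals $\sum_k \kappa_k\mathds{1}_{V_k}$, where $\kappa_k$ is its common value on $V_k$. It follows that $\dim\ker(A^\top)=c$.

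The argument is mostly bookkeeping. The one step that needs care is the column description: a loop yields a zero column and imposes no constraint, and the sign convention is irrelevant because only the condition $x_a=x_b$ is used. Isolated vertices count as components and are handled correctly, since they impose no constraints and each contributes one free coordinate.
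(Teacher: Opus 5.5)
Your proof is correct and follows essentially the same route as the paper. Both arguments identify each nonloop column as $\pm(e_a-e_b)$ and each loop column as zero, so $A^\top x=0$ is equivalent to $x_a=x_b$ across every nonloop edge, and then chain along paths for one inclusion while the converse is immediate. Your only addition is the explicit basis of component indicator vectors justifying $\dim\ker(A^\top)=c$, which the paper leaves implicit.
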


\begin{proof}
Every nonloop column of $A$ has the form $e_a-e_b$ for some edge $\{a,b\}$, so $A^\top x=0$ forces $x_a=x_b$ on every edge. Moving along paths, $x$ must be constant on each component. Loops give zero columns and impose no constraint. The converse is immediate: if $x$ is componentwise constant, then $x_a-x_b=0$ on every edge, hence $A^\top x=0$.
\end{proof}

\begin{theorem}[Rank formula]\label{thm:rank}
Let $c$ be the number of connected components of $G$. Then
\[
\mathrm{rank}(A)=r-c.
\]
In particular, for an indecomposable partition (i.e.\ a connected graph), $\mathrm{rank}(A)=r-1$.
\end{theorem}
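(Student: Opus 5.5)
The rank formula follows from rank--nullity applied to $A^\top$, together with Lemma~\ref{lem:kernel}. The matrix $A$ is $r\times l$, so $A^\top$ is $l\times r$ and acts on $\mathbb{R}^r$. Rank--nullity gives
\[
\mathrm{rank}(A^\top)=r-\dim\ker(A^\top).
\]
Since row rank equals column rank, $\mathrm{rank}(A)=\mathrm{rank}(A^\top)$.

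The first step is to invoke Lemma~\ref{lem:kernel}. It says that $\ker(A^\top)$ is the space of vectors $x\in\mathbb{R}^r$ that are constant on each connected component of $G$, with loops ignored. To justify $\dim\ker(A^\top)=c$ explicitly, let $V_1,\dots,V_c$ be the vertex sets of the components. The indicator vectors $\mathds{1}_{V_1},\dots,\mathds{1}_{V_c}$ have pairwise disjoint nonempty supports, so they are linearly independent. Every componentwise-constant vector is a linear combination $\sum_k x_{V_k}\mathds{1}_{V_k}$, so these indicators span the kernel. Hence they form a basis, and combining with rank--nullity gives $\mathrm{rank}(A)=r-c$.

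For the special case, Lemma~\ref{lem:indec-connected} shows that an indecomposable partition corresponds to a connected graph, so $c=1$ and $\mathrm{rank}(A)=r-1$.

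Two details need care, though neither is a real obstacle. First, isolated vertices must be counted as components. A block $Q_j$ whose entries all come in pairs $\{p_i,-p_i\}$ within that same block carries only loops, so it is its own component. This is consistent with the kernel description, since such an $x_j$ is unconstrained. Second, the orientation of each nonloop column does not matter: flipping the sign of a column does not change the rank. The substantive content sits entirely in Lemma~\ref{lem:kernel}, which the theorem takes as given.
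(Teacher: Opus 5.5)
Your proof is correct and follows the paper's argument: rank--nullity for $A^\top$ together with Lemma~\ref{lem:kernel} gives $\mathrm{rank}(A)=r-\dim\ker(A^\top)=r-c$, and the indecomposable case then follows from Lemma~\ref{lem:indec-connected}. The indicator-vector basis for $\ker(A^\top)$ and the remark about loop-only isolated vertices are harmless details that the paper leaves implicit.
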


\begin{proof}
By Lemma~\ref{lem:kernel} and rank--nullity,
\(
\mathrm{rank}(A)=r-\dim\ker(A^\top)=r-c.
\)
\end{proof}

\begin{remark}
The identity $\mathrm{rank}(A)=r-c$ for an oriented incidence matrix is standard in algebraic graph theory; cf. Theorem 8.3.1 of \citet{godsil2013algebraic}. For the $0$--$1$ (unoriented) incidence matrix $B$, the rank is $r-c_0$, where $c_0$ is the number of bipartite components; see Thm.~8.2.1 of the same text. Loops contribute zero columns in either model and do not affect the rank or connectivity.
\end{remark}

\begin{example}[Example A (indecomposable; $r=2$)]
\[
Q_1=\{p_1,-p_1,p_2,p_3\},\qquad Q_2=\{-p_2,-p_3\}.
\]
Edges: $i=2$ and $i=3$ join $Q_2$ to $Q_1$; $i=1$ is a loop at $Q_1$.
\[
A=\begin{pmatrix}
0 & 1 & 1\\
0 & -1 & -1
\end{pmatrix},\qquad \mathrm{rank}(A)=1=r-c \ (c=1).
\]
\begin{center}
\begin{tikzpicture}[node distance=28mm]
\node[vtx] (Q1) {$Q_1$};
\node[vtx, right=of Q1] (Q2) {$Q_2$};
\path (Q1) edge[loop above, min distance=12mm] node[above] {$i=1$} (Q1);
\draw[bend left=18] (Q2) to node[above] {$i=2$} (Q1);
\draw[bend right=18] (Q2) to node[below] {$i=3$} (Q1);
\end{tikzpicture}
\end{center}
\end{example}

A key observation from the above example is that for any block of an indecomposable partition (say $Q_1$), the linear constraint associated with the block (e.g. $\sum_{i\in Q_1}i = k$ for some $k$) will restrict exactly one variable. This observation will help us prove the following proposition which is important to the proof of asymptotic normality.
\begin{proposition}\label{indecomp_sum_corollary}
Let $Q_1,\cdots,Q_r$ be an indecomposable partition of of the following table
\begin{align*}
\begin{matrix}
&p_1&-p_1 \\
&\vdots&\vdots \\
&p_l&-p_l
\end{matrix}
\end{align*}
with $l\geqslant 2$ and $|Q_j|\geqslant2$, $j=1,\cdots,r$. Let $M_T$ be a sufficiently large positive integer.
\begin{enumerate}
\item When $l=2$, we have
\begin{align*}
\sum_{p_1=1}^{M_T}\sum_{p_2=1}^{M_T}\frac{1}{|p_1 - p_2||-p_1 + p_2|}\mathds{1}_{\{p_1 - p_2 \neq 0\}\cap\{-p_1 + p_2 \neq 0\}}
\leqslant 4 M_T
\end{align*}
and
\begin{align*}
\sum_{p_1=1}^{M_T}\sum_{p_2=1}^{M_T}\frac{1}{|p_1 + p_2||-p_1 - p_2|}\mathds{1}_{\{p_1 + p_2 \neq 0\}\cap\{-p_1 - p_2 \neq 0\}}
\leqslant 4 \log M_T .
\end{align*}
\item When $r\geqslant2$,
\begin{align*}
\sum_{p_1=1}^{M_T}\cdots\sum_{p_l=1}^{M_T}\mathds{1}_{\cap_{j=1}^r\{\sum_{i\in Q_j} i = 0\}}
\leqslant M_T^{l-r+1} .
\end{align*}
\item Suppose $l\geqslant3$. If $r=2$,
\begin{align*}
\sum_{p_1=1}^{M_T}\cdots\sum_{p_l=1}^{M_T}\frac{1}{|\sum_{i\in Q_1} i||\sum_{i\in Q_2} i|}\mathds{1}_{\cap_{j=1}^2\{\sum_{i\in Q_j} i \neq 0\}}
\leqslant 4 M_T^{l-1} .
\end{align*}
If $r\geqslant3$,
\begin{align*}
\sum_{p_1=1}^{M_T}\cdots\sum_{p_l=1}^{M_T}\frac{1}{|\sum_{i\in Q_1} i|\cdots|\sum_{i\in Q_r} i|}\mathds{1}_{\cap_{j=1}^r\{\sum_{i\in Q_j} i \neq 0\}}
\leqslant 4^{r-1} M_T^{l-r+1}(\log M_T)^{r-1} .
\end{align*}
\item When $l\geqslant 3$ and $1\leqslant q\leqslant r-1$,
\begin{align*}
&\sum_{p_1=1}^{M_T}\cdots\sum_{p_l=1}^{M_T}\frac{1}{|\sum_{i\in Q_1} i|\cdots|\sum_{i\in Q_q} i|}\mathds{1}_{\cap_{j=1}^q\{\sum_{i\in Q_j} i \neq 0\}\cap\cap_{j=q+1}^{r}\{\sum_{i\in Q_j} i = 0\}} \\
&\leqslant \left\{
    \begin {aligned}
         & 0 \quad & q = 1 \\
         & 4 M_T^{l-r+1} \quad & q = 2  \\
         & 4^{q-1} M_T^{l-r+1}(\log M_T)^{q-1} \quad & 3 \leqslant q \leqslant r-1
    \end{aligned}
\right. .
\end{align*}

\end{enumerate}
\begin{proof}
We first introduce some notation. Let $\|\{\sum_{i\in Q}i = k\}\|:=\sum_{p_1=1}^{M_T}\cdots\sum_{p_l=1}^{M_T}\mathds{1}_{\{\sum_{i\in Q} i = k\}}$. Then $\|\{\sum_{i\in Q}i = k\}\|$ is the number of solutions of $\sum_{i\in Q}i = k$ as an equation of $p_1,\cdots,p_l$ with $p_j=1,\cdots,M_T$, $j=1,\cdots,l$. For example, $\|\{p_1 - p_2 = 0\}\| = M_T$ and $\|\{p_1 + p_2 + p_3 = 3\}\| = 1$. It should be kept in mind that $\|\{\sum_{i\in Q}i = k\}\|$ is not a function of $p_1,\cdots,p_l$.

For the proof of part 1, by direct calculations,
\begin{align*}
&\sum_{p_1=1}^{M_T}\sum_{p_2=1}^{M_T}\frac{1}{|p_1 - p_2||-p_1 + p_2|}\mathds{1}_{\{p_1 - p_2 \neq 0\}\cap\{-p_1 + p_2 \neq 0\}}
\
=\sum_{k=1}^{M_T-1} \frac{\|\{p_1 - p_2 = k\}\| + \|\{p_1 - p_2 = -k\}\|}{k^2}  \\
&=\sum_{k=1}^{M_T-1}\frac{2(M_T - k)}{k^2}
\leqslant 4 M_T
\end{align*}
and
\begin{align*}
&\sum_{p_1=1}^{M_T}\sum_{p_2=1}^{M_T}\frac{1}{|p_1 + p_2||-p_1 - p_2|}\mathds{1}_{\{p_1 + p_2 \neq 0\}\cap\{-p_1 - p_2 \neq 0\}}
=\sum_{k=2}^{2M_T} \frac{\|\{p_1 + p_2 = k\}\|}{k^2}  \\
&\leqslant\sum_{k=2}^{2M_T}\frac{{k-1\choose 1}}{k^2}
\leqslant 4 \log M_T .
\end{align*}

To prove part 2, note that in view of the fact that $p_j\in\{1,\cdots, M_T\}$ for any $j=1,\cdots,r$ and Theorem~\ref{thm:rank}, either $\cap_{j=1}^r\{\sum_{i\in Q_j} i = 0\} = \emptyset$ or the system of equations $\sum_{i\in Q_j} i = 0$, $j=1,\cdots,r$ contains exactly $r-1$ distinct constraints. This leads to $\sum_{p_1=1}^{M_T}\cdots\sum_{p_l=1}^{M_T}\mathds{1}_{\cap_{j=1}^r\{\sum_{i\in Q_j} i = 0\}}
\leqslant M_T^{l-r+1}$.

Regarding part 3, when $r=2$, note that since $Q_1\cap Q_2 = \emptyset$ and $Q_1\cup Q_2$ is the whole table, we have $\sum_{i\in Q_1} i + \sum_{i\in Q_2} i = 0$ (i.e., the linear constraints induced by $Q_1$ and $Q_2$ are identical). Therefore,
\begin{align*}
&\sum_{p_1=1}^{M_T}\cdots\sum_{p_l=1}^{M_T}\frac{1}{|\sum_{i\in Q_1} i| |\sum_{i\in Q_2} i|}\mathds{1}_{\{\sum_{i\in Q_1} i \neq 0\}\cap\{\sum_{i\in Q_2} i \neq 0\}}
=\sum_{p_1=1}^{M_T}\cdots\sum_{p_l=1}^{M_T}\frac{1}{|\sum_{i\in Q_1} i|^2}\mathds{1}_{\{\sum_{i\in Q_1} i \neq 0\}} \\
&\leqslant\sum_{k=1}^{|Q_1| M_T}\frac{\|\{\sum_{i\in Q_1} i = k\}\| + \|\{\sum_{i\in Q_1} i = -k\}\|}{k^2}.
\end{align*}
Notice that due to the indecomposability of the partition, $Q_1$ confines exactly one variable via $\sum_{i\in Q_1} i = \pm k$. Therefore, $\|\{\sum_{i\in Q_1} i = k\}\|,~\|\{\sum_{i\in Q_1} i = -k\}\|\leqslant M_T^{l-1}$ and consequently, $\sum_{k=1}^{|Q_1| M_T}(\|\{\sum_{i\in Q_1} i = k\}\| + \|\{\sum_{i\in Q_1} i = -k\}\|)/k^2\leqslant \sum_{k}2M_T^{l-1}/k^2 \leqslant 4 M_T^{l-1}$. When $r\geqslant 3$, it is worth noting that unlike the case of $r=2$, the blocks of the partition will no longer lead to identical constraints (If there exist $Q_i$ and $Q_j$ leading to the same constraint, then $Q_i$ and $Q_j$ together form a partition of some rows, contradicting the indecomposability). Moreover, Theorem~\ref{thm:rank} and the fact that $\sum_{j=1}^r\sum_{i\in Q_j}i = 0$ jointly reveal that the first $r - 1$ constraints are distinct, and $\sum_{i\in Q_r}i = -\sum_{j=1}^{r-1}\sum_{i\in Q_j}i$. Therefore, we can write
\begin{align*}
&\sum_{p_1=1}^{M_T}\cdots\sum_{p_l=1}^{M_T}\frac{1}{|\sum_{i\in Q_1} i|\cdots|\sum_{i\in Q_r} i|}\mathds{1}_{\cap_{j=1}^r\{\sum_{i\in Q_j} i \neq 0\}} \\
&=\sum_{p_1=1}^{M_T}\cdots\sum_{p_l=1}^{M_T}\frac{1}{|\sum_{i\in Q_1} i|\cdots|\sum_{i\in Q_{r-1}}i||\sum_{j=1}^{r-1}\sum_{i\in Q_j}i|}\mathds{1}_{\cap_{j=1}^{r-1}\{\sum_{i\in Q_j} i \neq 0\}\cap\{\sum_{j=1}^{r-1}\sum_{i\in Q_j}i\neq 0\}} \\
&\leqslant \sum_{p_1=1}^{M_T}\cdots\sum_{p_l=1}^{M_T}\frac{1}{|\sum_{i\in Q_1} i|\cdots|\sum_{i\in Q_{r-1}}i|}\mathds{1}_{\cap_{j=1}^{r-1}\{\sum_{i\in Q_j} i \neq 0\}} \\
&\leqslant 2^{r-1}M_T^{l-r+1}\sum_{k_1=1}^{|Q_1| M_T}\cdots\sum_{k_{r-1}=1}^{|Q_{r-1}|M_T}\frac{1}{k_1\cdots k_{r-1}}
\leqslant 4^{r-1} M_T^{l-r+1}(\log M_T)^{r-1} .    
\end{align*}
where the second-to-last inequality is due to the fact that the number of variables confined by the first $r-1$ constraints is $r-1$.

Finally, to prove part 4, we first note that when $q = 1$, $\{\sum_{i\in Q_1} i \neq 0\}\cap\cap_{j=2}^{r}\{\sum_{i\in Q_j} i = 0\} = \emptyset$ due to the fact that $\sum_{i\in Q_1}i = -\sum_{j=2}^{r}\sum_{i\in Q_j}i$. When $2\leqslant q\leqslant r-1$, there will be $r - q$ equations (with $1\leqslant r-q\leqslant r-2$) forming $r-q$ distinct linear constraints due to Theorem~\ref{thm:rank}. Without loss of generality, suppose $p_1,\cdots,p_{r-q}$ are subject to the $r-q$ linear constraints. If we substitute $p_1,\cdots,p_{r-q}$ in the $q$ inequalities, we will then have $q$ inequalities associated with an indecomposable partition of the following $(l - r + q)\times 2$ table 
\begin{align*}
\begin{matrix}
&p_{r-q+1}&-p_{r-q+1} \\
&\vdots&\vdots \\
&p_l&-p_l
\end{matrix}
\end{align*}
with $q$ blocks. We will explain why it is the case. Suppose $p_1$ is restricted by $\sum_{i\in Q_1} i = 0$. Then $-p_1$ must also occur in another block (say, $Q_2$).
This indicates that $Q_1$ and $Q_2$ hook in row $\{p_1, -p_1\}$. Substituting $p_1$ in $\sum_{i\in Q_2} i$ according to $\sum_{i\in Q_1} i = 0$ is equivalent to merging $Q_1$ and $Q_2$ and deleting the row $\{p_1,-p_1\}$. We call the new block resulting from the operations $\tilde{Q}_1$. It is easy to see that $\tilde{Q}_1,Q_3,\cdots,Q_l$ form an indecomposable partition of the $(l -1)\times 2$ table $\{p_j, -p_j\}_{j=2}^l$. We can also describe the procedure in the language of multigraph. There is an edge connecting vertices $1$ and $2$ due to the fact that $Q_1$ and $Q_2$ hook. If we glue vertices $1$ and $2$ to form a new vertex $\tilde{1}$, then the edge connecting vertices $1$ and $2$ becomes a self-loop. After deleting this self-loop, the final graph corresponds to an indecomposable partition of a $(l -1)\times 2$ table.

After the subsequent substitutions, we get
\begin{align*}
&\sum_{p_1=1}^{M_T}\cdots\sum_{p_l=1}^{M_T}\frac{1}{|\sum_{i\in Q_1} i|\cdots|\sum_{i\in Q_q} i|}\mathds{1}_{\cap_{j=1}^q\{\sum_{i\in Q_j} i \neq 0\}\cap\cap_{j=q+1}^{r}\{\sum_{i\in Q_j} i = 0\}} \\
&=\sum_{p_{r-q+1}=1}^{M_T}\cdots\sum_{p_l=1}^{M_T}\frac{1}{|\sum_{i\in \tilde{Q}_1} i|\cdots|\sum_{i\in \tilde{Q}_q} i|}\mathds{1}_{\cap_{j=1}^q\{\sum_{i\in \tilde{Q}_j} i \neq 0\}}
\end{align*}
Notice that $l - r + q \geqslant 2$. Then part 3 shows that the above sum can be bounded by $4 M_T^{l - r + 1}$ when $q = 2$, or $4^{q-1} M_T^{l-r+1}(\log M_T)^{q-1}$ when $q\geqslant3$.

\end{proof}
\end{proposition}

\end{document}